\newcommand{\NN}{{\mathbb N}}
\newcommand{\QQ}{{\mathbb Q}}
\newcommand{\RR}{{\mathbb R}}
\newcommand{\abs}[1]{ \left| #1 \right|}
\newcommand{\bs}[1]{\boldsymbol{#1}}
\newcommand{\del}{\partial}
\newcommand{\eps}{\varepsilon}
\newcommand{\ind}{\mathbf{1}}
\newcommand{\nor}[2]{\left\|#1\right\|_{#2}}
\newcommand{\oline}[1]{\overline{#1}}
\newcommand{\oo}{\infty}
\newcommand{\pars}[1]{\left(#1\right)}
\newcommand{\tr}{\on{tr}}
\DeclareMathOperator*{\argmax}{arg\,max}
\newcommand{\mcl}{\mathcal}
\newcommand{\mbf}{\mathbf}
\newcommand{\on}{\operatorname}
\newtheorem{lemma}{Lemma}
\newtheorem{proposition}{Proposition}
\newtheorem{theorem}{Theorem}
\theoremstyle{definition}
\numberwithin{equation}{section}
\numberwithin{lemma}{section}
\numberwithin{proposition}{section}
\numberwithin{theorem}{section}
\numberwithin{corollary}{section}
\numberwithin{definition}{section}
\begin{document}
\title{Homogenization of a stochastically forced Hamilton-Jacobi equation}
\author{Benjamin Seeger}

\address{Universit\'e Paris-Dauphine \& Coll\`ege de France \\ Place du Mar\'echal de Lattre de Tassigny \\ 75016 Paris, France}
\email{seeger@ceremade.dauphine.fr}

\thanks{Partially supported by the National Science Foundation Mathematical Sciences Postdoctoral Research Fellowship under Grant Number DMS-1902658}

\subjclass[2010]{60H15, 35B27}
\keywords{Stochastic homogenization, stochastic Hamilton-Jacobi equation, stochastic enhancement, eikonal equation}

\date{\today}

\maketitle

\begin{abstract}
	We study the homogenization of a Hamilton-Jacobi equation forced by rapidly oscillating noise that is colored in space and white in time. It is shown that the homogenized equation is deterministic, and, in general, the noise has an enhancement effect, for which we provide a quantitative estimate. As an application, we perform a noise sensitivity analysis for Hamilton-Jacobi equations forced by a noise term with small amplitude, and identify the scaling at which the macroscopic enhancement effect is felt. The results depend on new, probabilistic estimates for the large scale H\"older regularity of the solutions, which are of independent interest. 
\end{abstract}

\section{Introduction}

The purpose of this paper is to study the asymptotic behavior of stochastically forced Hamilton-Jacobi equations that take the form
\begin{equation} \label{E:main}
	u^\eps_t + H(Du^\eps) = F^\eps(x,t,\omega) \quad \text{in } \RR^d \times (0,\oo) \times \Omega \quad \text{and} \quad u^\eps(x,0,\omega) = u_0(x) \quad \text{in } \RR^d \times \Omega,
\end{equation}
where the initial datum $u_0$ belongs to $BUC(\RR^d)$, the space of bounded, uniformly continuous functions, and $(\Omega,\mbf F, \mbf P)$ is a given probability space.

We assume that 
\begin{equation}\label{A:introH}
	H: \RR^d \to \RR \text{ is convex with superlinear growth,}
\end{equation}
and the noise term $F^\eps$, which is scaled by a small parameter $\eps > 0$, is white in time and smooth in space:
\begin{equation}\label{scalednoise}
	\left\{
	\begin{split}
		&F^\eps(x,t,\omega) := F\pars{ \frac{x}{\eps}, \frac{t}{\eps} ,\omega}, \quad \text{where}\\
		&F(x,t,\omega) := f(x,\omega) \cdot \dot B(t) = \sum_{i=1}^m f^i(x,\omega) \dot B^i(t,\omega),\\
		&f = (f^1,f^2,\ldots,f^m) \text{ is a smooth, stationary-ergodic random field, and}\\
		&B = (B^1,B^2,\cdots,B^m): [0,\oo) \times \Omega \to \RR^m \text{ is a Brownian motion independent of $f$.}
	\end{split}
	\right.
\end{equation}
More precise assumptions will be given in Section \ref{S:A}.

\subsection{The homogenization result}

Our main goal is to demonstrate that, as $\eps \to 0$, the limiting behavior of \eqref{E:main} is governed by a deterministic, homogenized initial value problem
\begin{equation} \label{E:mainlimit}
	\oline{u}_t + \oline{H}(D \oline{u}) = 0 \quad \text{in } \RR^d \times (0,\oo) \quad \text{and} \quad \oline{u}(\cdot,0) = u_0 \quad \text{in } \RR^d.
\end{equation}

\begin{theorem}\label{T:introhomog}
	Assume \eqref{A:introH} and \eqref{scalednoise}. Then there exists a deterministic, convex, super-linear Hamiltonian $\oline{H}: \RR^d \to \RR$ such that, for all $u_0 \in BUC(\RR^d)$, the solution $u^\eps$ of \eqref{E:main} converges locally uniformly with probability one to the viscosity solution $\oline{u}$ of \eqref{E:mainlimit}.
\end{theorem}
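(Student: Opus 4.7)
The plan is to follow the stochastic homogenization strategy pioneered for convex HJ equations by Souganidis, Rezakhanlou-Tarver, and Lions-Souganidis, adapting it to the white-in-time stochastic forcing. Two ingredients drive the argument: (i) construction of a deterministic $\oline{H}$ via a subadditive ergodic limit applied to a "metric" (first-passage-type) quantity naturally attached to \eqref{E:main}, and (ii) a perturbed test function argument that upgrades the information about $\oline{H}$ into locally uniform convergence $u^\eps \to \oline{u}$ along almost every $\omega$.

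Concretely, I would first pass to the hyperbolic rescaling $y = x/\eps$, $s = t/\eps$ and $v(y,s,\omega) = \eps^{-1} u^\eps(\eps y,\eps s,\omega)$, which turns \eqref{E:main} into the large-time/large-space problem $v_s + H(Dv) = f(y) \cdot \dot B(s)$ for $s \in (0, T/\eps)$ with linearly growing initial data. Since $H$ is convex, I would use a variational representation of (stochastic) viscosity solutions in the sense of Lions-Souganidis to define, for points $y,z\in\RR^d$ and times $0 \le s \le t$, a cost
\[
	m(y,z;s,t;\omega) := \inf_{\substack{\gamma \in AC \\ \gamma(s) = y,\ \gamma(t) = z}} \left\{ \int_s^t L(\dot\gamma(\tau))\,d\tau - \int_s^t f(\gamma(\tau),\omega) \cdot dB(\tau,\omega) \right\},
\]
where $L := H^*$ and the stochastic integral is interpreted consistently with the Lions-Souganidis theory (approximating $B$ by smooth paths). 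Concatenation of optimizing curves immediately gives a subadditive inequality $m(y,z;s,t) \le m(y,w;s,r) + m(w,z;r,t)$, and stationarity of $f$ together with the independent-increments structure of $B$ makes the family stationary in an appropriate joint sense. A multiparameter subadditive ergodic theorem (Akcoglu-Krengel, after a standard reduction to extract the random parameter), applied to $m(0, t e; 0, t;\omega)/t$, then produces an a.s.\ deterministic limit $\oline{L}(e)$ that is convex and superlinear, and one sets $\oline{H} := \oline{L}^*$.

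The main obstacle is the stochastic regularity: the finiteness and integrability of $m$, the equicontinuity of the family $u^\eps$ on compact space-time sets on a full-probability event, and the implementation of the perturbed test function argument in the stochastic viscosity setting all hinge on quantitative \emph{large-scale Hölder} estimates. These are precisely the probabilistic regularity estimates highlighted in the abstract; without them, the non-existence of genuine correctors in stationary-ergodic environments is compounded by the lack of pointwise time regularity from the white-noise forcing. I would therefore spend the bulk of the technical work on: proving tail estimates for $m$ that upgrade $L^1$ convergence in Kingman's theorem to a.s.\ convergence and imply uniform control on the solution family; and constructing, for each $p \in \RR^d$ and $\delta > 0$, an approximate \emph{stochastic subcorrector} $\psi^{p,\delta}(y,s,\omega)$ that is sublinear at infinity a.s.\ and solves $\psi_s + H(p + D\psi) \le \oline{H}(p) + \delta + f \cdot \dot B$ in the stochastic viscosity sense.

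With these inputs, the perturbed test function method closes the argument: for a smooth test function $\phi$ touching the half-relaxed upper limit $u^*(x,t) := \limsup^*_{\eps \to 0} u^\eps$ from above at $(x_0,t_0)$, the perturbation $\phi(x,t) + \eps\, \psi^{p,\delta}\!\bigl(x/\eps, t/\eps;\omega\bigr)$ with $p = D\phi(x_0,t_0)$ forces the viscosity inequality $\phi_t(x_0,t_0) + \oline{H}(D\phi(x_0,t_0)) \le \delta$ after sending $\eps \to 0$ (using the sublinearity of $\psi^{p,\delta}$) and then $\delta \to 0$; the symmetric construction for supersolutions handles $u_* := \liminf_{*}\, u^\eps$. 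Comparison for the deterministic, convex, coercive equation \eqref{E:mainlimit} yields $u^* = u_* = \oline{u}$, hence locally uniform convergence on a set of full probability. The fact that $\oline{H}$ is deterministic follows from the ergodicity of $f$ (and independence of $B$), which makes the subadditive limit measurable with respect to the tail $\sigma$-algebra and hence a.s.\ constant.
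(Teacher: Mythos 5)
Your first ingredient is exactly the paper's: define the random metric $L(x,y,s,t,\omega)$ via the variational representation, verify stationarity and subadditivity using the spatial translation group and the temporal Markov shift, apply the Akcoglu--Krengel subadditive ergodic theorem (plus the multiparameter ergodic theorem and an Egoroff argument for the locally uniform version), and set $\oline{H}=\oline{L}^*$; the probabilistic H\"older estimates play precisely the role you describe, supplying the equicontinuity needed to upgrade the almost-sure pointwise limit to a locally uniform one.

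Your second ingredient diverges, and it is where the gap lies. You propose to close the argument with a perturbed test function method, which requires constructing, for each $p$ and $\delta$, an a.s.\ sublinear approximate stochastic subcorrector solving $\psi_s + H(p+D\psi)\le \oline{H}(p)+\delta + f\cdot \dot B$ in a stochastic viscosity sense. You flag this as ``the bulk of the technical work'' but offer no construction, and this is not a routine step: for white-in-time forcing the corrector problem is an essentially open issue that the paper explicitly identifies as a direction for future research, and the perturbed test function argument additionally presupposes a notion of stochastic viscosity sub/supersolution with a usable comparison principle for the $\eps$-problem, neither of which is given. The paper avoids all of this. Because $H$ is convex, both $u^\eps$ and $\oline{u}$ admit explicit variational (Hopf--Lax) representations, $u^\eps(x,t)=\inf_y\bigl(u_0(y)+L^\eps(y,x,0,t)\bigr)$ and $\oline{u}(x,t)=\inf_y\bigl(u_0(y)+t\,\oline{L}((x-y)/t)\bigr)$, so the locally uniform convergence $L^\eps\to (t-s)\oline{L}\bigl((y-x)/(t-s)\bigr)$ established by the subadditive ergodic theorem, together with a coercivity/localization estimate on the infimum, \emph{directly} yields $u^\eps\to\oline{u}$. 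No correctors, no half-relaxed limits, no perturbed test functions. You already have all the structure needed for this shorter path in your first ingredient; the detour through correctors is both unnecessary under the convexity assumption and the single unresolved step that blocks your proposed proof.
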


\subsection{The enhancement effect}

The scaling properties of Brownian motion imply that, in law,
\[
	F^\eps(x,t,\omega) \stackrel{d}{=} \eps^{1/2} f \pars{ \frac{x}{\eps} ,\omega} \cdot \dot B(t,\omega),
\]
and so formally, as $\eps \to 0$, the right-hand side of \eqref{E:main} converges to zero. Nevertheless, although singular terms no longer appear in \eqref{E:mainlimit}, it turns out that the noise has a nontrivial effect on the limiting equation.

\begin{theorem}\label{T:introenhancement}
	In addition to the hypotheses of Theorem \ref{T:introhomog}, assume that $f$ is not constant on $\RR^d$. Then
	\begin{equation}\label{effectiveHestimates}
			\oline{H}(p) > H(p) \quad \text{for all } p \in \RR^d.
	\end{equation}
\end{theorem}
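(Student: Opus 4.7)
The plan is to characterize $\bar H(p)$ as the Ces\`aro time average of $\EE[H(Du^1)]$ for the unscaled solution with linear initial data $u_0(x) = p \cdot x$, and then combine Jensen's inequality with a rigidity argument to derive the strict inequality from the non-constancy of $f$.

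Apply Theorem \ref{T:introhomog} to $u_0(x) = p \cdot x$ and use the scaling identity $u^\eps(x,t) = \eps\, u^1(x/\eps, t/\eps)$, where $u^1$ solves the unscaled SPDE with $u^1(\cdot, 0) = p \cdot x$; this converts the homogenization limit into the long-time statement $T^{-1} u^1(0, T) \to -\bar H(p)$. Integrating the SPDE in time and using that $\int_0^T f \cdot dB$ has zero expectation (by independence of $f$ and $B$),
\[
	\EE\, u^1(x, T) = p \cdot x - \int_0^T \alpha(s)\, ds, \qquad \alpha(s) := \EE[H(Du^1(x, s))].
\]
The spatial stationarity of $f$ together with the affinity of the initial datum make $u^1(x,s) - p \cdot x$ stationary in $x$, so that $\alpha(s)$ is $x$-independent and $\EE[Du^1(x, s)] = p$. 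Hence $\bar H(p) = \lim_{T \to \infty} T^{-1} \int_0^T \alpha(s)\, ds$.

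Jensen's inequality immediately gives $\alpha(s) \geq H(p)$, hence $\bar H(p) \geq H(p)$. For strict inequality, the key is a rigidity observation: if $Du^1(x, s) = p$ almost surely on a set of positive $(x, s)$-measure, then on that set $u^1(x, s) = p \cdot x + g(s, \omega)$, and plugging into the SPDE gives $g'(s, \omega) + H(p) = f(x) \cdot \dot B(s)$, forcing the right-hand side to be $x$-independent, which contradicts the non-constancy of $f$. Therefore $\on{Var}(Du^1(x, s)) > 0$ on a set of positive measure, and strict convexity of $H$ upgrades this to $\alpha(s) > H(p)$ on a set of positive $s$-measure.

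To promote the latter to strict inequality of the Ces\`aro average, I would show that $Du^1(x, s)$ equilibrates as $s \to \infty$ to a stationary law $\mu_p$ with mean $p$, so that $\alpha(s) \to \alpha_\infty = \int H\, d\mu_p$; applying the rigidity argument in the stationary regime forces $\mu_p$ to be non-degenerate, whence $\alpha_\infty > H(p)$ and $\bar H(p) = \alpha_\infty > H(p)$. The principal obstacle is the construction of this invariant law $\mu_p$ — essentially a stochastic cell problem for the gradient — which requires uniform-in-time regularity control on $u^1$. This is precisely where the large-scale H\"older regularity estimates developed in the paper naturally enter. The integration-by-parts identity underlying the formula for $\EE u^1(x, T)$ and the vanishing of the stochastic integral's expectation are routine within the stochastic viscosity solution framework.
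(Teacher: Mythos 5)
Your approach is a genuine departure from the paper's: you work on the PDE side, via the Ces\`aro time average of $\mbf E[H(Du^1)]$, Jensen's inequality, and a rigidity argument, whereas the paper proves the strict inequality on the Lagrangian side (Theorem \ref{T:enhancement}). There the key idea is to test $L$ with explicit ``wiggling'' piecewise-linear paths oscillating about the straight line $r \mapsto vr$ with amplitude $\delta$ and period $M$, with the oscillation direction $u_k$ chosen \emph{after} seeing the noise so as to minimize the contribution $u_k \cdot Y_k$. The rotational invariance of the Wiener measure makes $\mbf E \min_{|u|\le 1} u\cdot Y = -\mbf E |Y|$, which is then lower-bounded by a quantity proportional to $\mbf E |Df(0)|^2$. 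This gives $\oline{L}(v) < H^*(v)$ strictly (in fact with an explicit quantitative gap) and hence $\oline{H} > H$ by duality, without ever constructing stationary solutions.

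Your plan has two gaps that I don't think can be closed within the paper's hypotheses. First, the passage from ``$\on{Var}(Du^1) > 0$'' to ``$\alpha(s) > H(p)$'' requires \emph{strict} convexity of $H$, but \eqref{A:Hestimates} assumes only convexity with polynomial growth; a Hamiltonian such as $H(p) = (|p|-1)_+^2$ satisfies every hypothesis of the theorem yet is affine (constant) on $B_1$, so Jensen can degenerate to equality for a nondegenerate gradient law supported there. The paper's argument does not see this issue because it never passes through Jensen on $H$; the strict inequality is obtained from the noise side, not the Hamiltonian side. Second, and more seriously, the ``equilibration to a stationary law $\mu_p$'' is precisely the corrector/invariant-measure problem for stochastically forced Hamilton--Jacobi equations. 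As the paper's own background section emphasizes in connection with the Bakhtin--Khanin program, the existence of such stationary objects is a central open conjecture outside of a handful of special settings. It is not a routine consequence of the H\"older estimates of Section \ref{S:randomL}, which are uniform on compact space-time windows but say nothing about long-time tightness of the gradient process. Invoking this as a step inverts the logical order: the homogenization result is proved in the paper precisely \emph{without} assuming the existence of stationary solutions, via the sub-additive ergodic theorem applied to the Lagrangian. Finally, a smaller technical issue: the identity ``$Du^1 = p$ on a set of positive $(x,s)$-measure $\Rightarrow$ $u^1(x,s) = p\cdot x + g(s)$ on that set'' is not immediate for a merely measurable positive-measure set, and the plugging-in step assumes the SPDE holds pointwise on sets where $u$ happens to be differentiable, which needs justification for viscosity solutions driven by rough forcing.
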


The particular case of the eikonal equation
\begin{equation}\label{E:eikonal}
	u^\eps_t + \frac{1}{2} |Du|^2 = F^\eps(x,t,\omega) \quad \text{in } \RR^d \times (0,\oo) \times \Omega \quad \text{and} \quad u^\eps(\cdot,0,\omega) = u_0 \quad \text{in } \RR^d \times \Omega
\end{equation}
is a simplified model for turbulent combustion, in which the evolving region
\[
	U^\eps_t := \left\{ (x,y) \in \RR^d \times \RR : y > u^\eps(x,t) \right\}
\]
and its complement represent respectively ``burnt'' and ``unburnt'' regions in a rough, dynamic environment. The noise term $F^\eps$ corresponds to random turbulence, which, according to Theorem \ref{T:introenhancement}, gives rise to an average, large-scale enhancement effect on the velocity of the interface. 

We will also investigate the effect that varying the strength of the noise has on the limiting problem. More precisely, for some $\theta \in \RR$ and for $f$ and $B$ as in \eqref{scalednoise}, we study the initial value problem
\begin{equation}\label{differentamp}
	u^\eps_t + H(Du^\eps) = \eps^\theta f\pars{ \frac{x}{\eps},\omega } \cdot \dot B\pars{t,\omega} \quad \text{in } \RR^d \times (0,\oo) \times \Omega \quad \text{and} \quad u^\eps(x,0,\omega) = u_0 \quad \text	{in } \RR^d \times \Omega.
\end{equation}
The following result explains the relationship between the size of the noise (in terms of $\theta$) and the enhancement property.
	
\begin{theorem}\label{T:introdifferentscalings}
	Under the assumptions of Theorem \ref{T:introenhancement}, let $\theta \in \RR$ and let $u^\eps$ be the solution of \eqref{differentamp}.
	
	\begin{enumerate}[(a)]
	\item If $\theta > 1/2$, then, as $\eps \to 0$, $u^\eps$ converges locally uniformly in probability to the solution $u$ of
	\[
		u_t + H(Du) = 0 \quad \text{in } \RR^d \times (0,\oo) \quad \text{and} \quad u(\cdot,0) = u_0 \quad \text{in } \RR^d.
	\]
	\item If $\theta < 1/2$, then, as $\eps \to 0$, $u^\eps$ converges locally uniformly in $\RR^d \times (0,\oo)$ in probability to $-\oo$.
	
	\item If $\theta = 1/2$, then there exists a deterministic, convex Hamiltonian $\oline{H}:\RR^d \to \RR$ with $\oline{H} > H$ such that, as $\eps \to 0$, $u^\eps$ converges locally uniformly in probability to the solution $\oline{u}$ of 
	\[
		\oline{u}_t + \oline{H}(D\oline{u}) = 0 \quad \text{in } \RR^d \times (0,\oo) \quad \text{and} \quad \oline{u}(\cdot,0) = u_0 \quad \text{in } \RR^d \times \{0\}.
	\]
	\end{enumerate}
\end{theorem}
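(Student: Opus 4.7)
The plan is to handle the three regimes separately, each reducing to or comparing against Theorem \ref{T:introhomog}.

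Case (c), $\theta = 1/2$, is essentially a restatement of Theorem \ref{T:introhomog}. By Brownian scaling $\eps^{1/2} \dot B(t) \stackrel{d}{=} \dot B(t/\eps)$, the solutions of \eqref{differentamp} at $\theta = 1/2$ have the same law as those of \eqref{E:main}. Theorem \ref{T:introhomog} gives almost-sure convergence to the deterministic limit $\oline u$; because the limit is deterministic, distributional equivalence upgrades to convergence in probability.

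For case (a), $\theta > 1/2$, I would show $u^\eps \to u$ (solution of $u_t + H(Du) = 0$) by a stability argument. For each $\delta > 0$, I would build sub- and supersolutions of \eqref{differentamp} of the form $u(x,t) \pm \delta t \pm \Psi^\eps(x,t)$, where $\Psi^\eps$ is a corrector absorbing the noise. The key input is a uniform probabilistic estimate
\[
	\sup_{x \in K,\ 0 \le t \le T} \left| \eps^\theta \int_0^t f(x/\eps)\,dB(s) \right| \to 0 \quad \text{in probability as } \eps \to 0
\]
on bounded $K, T$, obtained from Burkholder-Davis-Gundy combined with a covering argument in $x$. Passing $\eps \to 0$ and then $\delta \to 0$ in the viscosity comparison would close the argument.

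For case (b), $\theta < 1/2$, the amplitude is supercritical, and Theorem \ref{T:introenhancement}'s strict inequality $\oline H > H$ should drive $u^\eps \to -\infty$. The most promising strategy is via the variational representation: for convex $H$ with Legendre dual $L$,
\[
	u^\eps(x,t) = \inf_{\gamma(t)=x} \left\{ u_0(\gamma(0)) + \int_0^t L(\dot\gamma(s))\, ds + \eps^\theta \int_0^t f(\gamma(s)/\eps)\, dB(s) \right\},
\]
and by selecting paths that anti-correlate $f(\gamma/\eps)$ with $dB$ (possible because $f$ is non-constant and paths can oscillate on the $\eps$-scale to sample both signs of $f$), the stochastic term should contribute a strictly negative amount of macroscopic order when $\theta < 1/2$. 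Equivalently, one can apply Theorem \ref{T:introhomog} at a refined scale $\mu = \eps^{2\theta}$, which is the natural time scale of the rescaled noise: on each subinterval of length $\mu$ the solution drops by $(\oline H - H)(p)\,\mu$ in the effective sense, and summing $O(t/\mu) = O(t\,\eps^{-2\theta})$ subintervals produces divergence. The hard part is this last step: the iterated-enhancement heuristic requires a quantitative version of Theorem \ref{T:introhomog}, uniform over the many subintervals, and careful control of the errors at the boundaries between them. This is precisely where the large-scale probabilistic H\"older estimates promised in the abstract should enter, providing the regularity needed to patch together the local homogenization statements.
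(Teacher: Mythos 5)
Your treatment of case (c) matches the paper's: the $\theta=1/2$ case is indeed Theorems \ref{T:introhomog} and \ref{T:introenhancement} in disguise, after replacing $B$ by $t\mapsto\eps^{1/2}B(t/\eps)$ and observing that almost-sure convergence to a deterministic limit yields convergence in probability under any equivalent law.

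Case (a) has a genuine gap. The estimate you propose as the key input,
\[
\sup_{x\in K,\ 0\le t\le T}\Big|\eps^\theta\int_0^t f(x/\eps)\,dB(s)\Big|\xrightarrow{\ \eps\to 0\ }0\quad\text{in probability,}
\]
is simply $\eps^\theta\sup_x|f(x/\eps)|\sup_{t\le T}|B(t)|$ and therefore holds for \emph{every} $\theta>0$. If it were sufficient, the same argument would prove convergence to the unperturbed solution for $0<\theta\le 1/2$, contradicting parts (b) and (c). The reason it is not sufficient is that the solution does not probe $f$ at frozen spatial points: the variational formula samples $f(\gamma_r/\eps)$ along Lipschitz paths $\gamma$ which can and do oscillate on the $\eps$-scale. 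Relatedly, a corrector of the natural form $\Psi^\eps(x,t)=\eps^\theta f(x/\eps)\cdot B(t)$ has gradient $D\Psi^\eps=\eps^{\theta-1}Df(x/\eps)\cdot B(t)$, which does \emph{not} vanish when $\theta<1$; after the pathwise transform $v^\eps:=u^\eps-\Psi^\eps$ one is left with $v^\eps_t+H(Dv^\eps+D\Psi^\eps)=0$, and the term $D\Psi^\eps$ is $O(\eps^{\theta-1})$, not $o(1)$. The paper instead works directly on the Lagrangian $L_{\eps,f}$ and proves that $L_{\eps,f}(x,y,s,t)\to(t-s)H^*\big((y-x)/(t-s)\big)$ locally uniformly, with the $\theta=1/2$ threshold emerging from Lemma \ref{L:fauxIto}, which trades the stochastic integral of $f$ along a Lipschitz path against the path's $W^{1,q'}$-energy. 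That path-dependent estimate, not a frozen-$x$ supremum, is what encodes criticality.

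For case (b) you correctly identify the mechanism (strict enhancement, $\oline H>H$) and are candid that your iterated/rescaled argument needs a quantitative homogenization theorem to patch the subintervals, which is a real obstruction. The paper avoids this with a one-line decomposition you may have missed: take $v_0\in\argmin H^*$ and write
\[
L_{\eps,f}(x,y,s,t)\le (t-s)H^*(v_0)+\eps^{\theta-1/2}\Big(-(t-s)H^*(v_0)+L^\eps(x,y,s,t)\Big),
\]
valid because the noise in $L_{\eps,f}$ carries the explicit prefactor $\eps^{\theta-1/2}$ relative to $L^\eps$. Lemma \ref{L:Lbarlocaluniform} gives $L^\eps(x-tv_0,x,0,t)\to t\,\oline L(v_0)$ with probability one, Theorem \ref{T:enhancement} gives $\oline L(v_0)<H^*(v_0)$, and since $\eps^{\theta-1/2}\to\infty$ the upper bound diverges to $-\infty$. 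No iteration or quantitative rate is needed.
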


\subsection{A regularity result}

The convergence results are proved by applying the sub-additive ergodic theorem \cite{AK} to particular solutions of the equation. A crucial tool in the analysis is a H\"older regularity estimate for solutions of
\begin{equation}\label{E:isitregular}
	u_t + H(Du) = \sum_{i=1}^m f^i(x,\omega) \dot B^i(t,\omega) \quad \text{in } \RR^d \times (0,\oo) \times \Omega
\end{equation}
that is invariant under the scaling $(x,t) \to (x/\eps,t/\eps)$.

If the Brownian motion $B$ is replaced with a continuously differentiable path, with $\dot B(t,\omega)$ bounded uniformly in $(t,\omega) \in [0,\oo) \times \Omega$, then \eqref{E:isitregular} is a Hamilton-Jacobi equations of the form
\begin{equation}\label{E:generalxt}
	u_t + \tilde H(Du,x,t) = 0 \quad \text{in } \RR^d \times (0,\oo)
\end{equation}
for some $\tilde H \in C(\RR^d \times \RR^d \times [0,\oo))$. The results of \cite{CC,C,CS,Schwab} imply that the H\"older semi-norm of $u$ can be locally controlled in terms of the growth of $\nor{f}{\oo}$, $\| \dot B \|_\oo$, $\nor{u}{\oo}$, and the growth of $H$ in $Du$. However, none of these works apply to \eqref{E:isitregular}, where the right-hand side is not only unbounded, but nowhere point-wise defined. 

The transformation
\[
	\tilde u(x,t,\cdot) := u(x,t,\cdot) - \sum_{i=1}^m f^i(x,\cdot) B^i(t,\cdot) 
\]
leads to the equation
\begin{equation}\label{E:introtransformed}
	\tilde u_t + H \pars{ D \tilde u + Df(x,\omega) \cdot B(t,\omega) } = 0 \quad \text{in } \RR^d \times (0,\oo) \times \Omega,
\end{equation}
which, for each fixed $\omega \in \Omega$, is a classical Hamilton-Jacobi equation of the form \eqref{E:generalxt}. Applying known regularity results to \eqref{E:introtransformed} then yields H\"older estimates that depend on $\nor{Df}{\oo}$, which presents a major obstacle to finding estimates for \eqref{E:generalxt} that are scale-invariant.

These issues are resolved by the following result, which is of independent interest.

\begin{theorem}\label{T:introregularity}
	Fix $M,R > 0$, and assume that $H$ satisfies \eqref{A:introH}, $f \in C^1_b(\RR^d,\RR^m)$, and $B$ is a standard $m$-dimensional Brownian motion on the probability space $(\Omega,\mbf F, \mbf P)$. Then there exist constants $C_1 = C_1(R,M) > 0$, $\alpha,\beta > 0$, and, for all $p \ge 1$, $C_2 = C_2(R,M,p) > 0$ such that, if
	\[
		\nor{f}{\oo}\cdot \nor{Df}{\oo} + \nor{f}{\oo} + \nor{u(\cdot,0)}{\oo} \le M,
	\]
	then, for all $\lambda \ge 1$,
	\[
		\mbf P \pars{ \sup_{(x,s),(y,t) \in B_R \times [1/R,R]} \frac{ \abs{ u(x,s) - u(y,t)}}{|x-y|^\alpha + |s-t|^\beta} > C_1 + \lambda } \le \frac{C_2 \nor{f}{\oo}^p}{\lambda^{p}}.
	\]
\end{theorem}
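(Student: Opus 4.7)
The plan is to derive the estimate from a variational (optimal control) representation of the solution. Since $H$ is convex and superlinear, its Legendre transform $L(v) := \sup_{p \in \RR^d}(p \cdot v - H(p))$ is finite-valued and satisfies $L(v) \ge c|v|^q - C$ for some $q > 1$ and $c,C > 0$. Interpreting \eqref{E:isitregular} in the stochastic viscosity sense, I would argue that the solution admits the representation
\[
	u(x,t) = \inf_\gamma \left\{ u(\gamma(0),0) + \int_0^t L(\dot\gamma(s))\,ds + \int_0^t f(\gamma(s)) \cdot dB(s) \right\},
\]
where the infimum runs over absolutely continuous, appropriately adapted paths $\gamma \colon [0,t] \to \RR^d$ ending at $x$. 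The first step is an a priori action bound: comparing a near-optimal path $\gamma^*$ at $(x,t)$ to the constant path, together with the BDG estimate $\EE \sup_{r \le t} \abs{\int_0^r f(\gamma^*) \cdot dB}^p \lesssim_p R^{p/2}\|f\|_\oo^p$, yields $\int_0^t |\dot\gamma^*(r)|^q\,dr \le K_\omega$ with $\EE K_\omega^p$ controlled by $M$, $R$, and $\|f\|_\oo$. In particular, $\gamma^*$ is H\"older-$(1-1/q)$ in time with constant $K_\omega^{1/q}$.

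For the spatial estimate, I fix $|x-y|$ small, take a near-optimal path $\gamma^*$ at $(x,t)$, and modify it on a final slice $[t-h,t]$ by a straight segment so that the new path $\tilde\gamma$ ends at $y$. The extra deterministic action is of order $Ch + C|x-y|^q/h^{q-1}$ (using the velocity bound from the first step), while the stochastic increment $\int_{t-h}^t [f(\tilde\gamma) - f(\gamma^*)] \cdot dB$ has $L^p$ size bounded by $C_p \|f\|_\oo h^{1/2}$, since by BDG one can use only the crude pointwise bound $|f(\tilde\gamma) - f(\gamma^*)| \le 2\|f\|_\oo$ rather than a gradient bound. Optimizing $h \sim |x-y|$ produces $|u(x,t) - u(y,t)| \le C(1 + K_\omega)|x-y| + Z_{x,y,t}$ with $\EE |Z_{x,y,t}|^p \lesssim \|f\|_\oo^p |x-y|^{p/2}$. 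The temporal bound follows by combining the trivial upper estimate from the constant path on $[t,t+h]$ with a lower estimate using the near-optimal path at $(x,t+h)$ evaluated at the intermediate point $\gamma^*(t)$; invoking the spatial estimate together with the H\"older-$(1-1/q)$ regularity of $\gamma^*$ yields exponents of the form $\alpha = 1/2$ and $\beta = (q-1)/(2q)$.

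The final step upgrades these pointwise moment bounds to a uniform H\"older estimate on $B_R \times [1/R, R]$ via Kolmogorov--Chentsov chaining, taking $p$ larger than a multiple of $d$, after which Markov's inequality produces the desired tail $C_2 \|f\|_\oo^p / \lambda^p$. I expect the main difficulty to be twofold: (i) justifying the variational representation rigorously in the stochastic viscosity framework, with enough adaptedness of the comparison paths to make the It\^o integrals well-defined; and (ii) arranging every stochastic-integral estimate so that it depends only on $\|f\|_\oo$ rather than on $\|Df\|_\oo$. The second point is the crucial new feature here, since, as the paper emphasizes, the naive transformation $\tilde u = u - f \cdot B$ produces bounds involving $\|Df\|_\oo$ that are not scale-invariant and thus unusable in the homogenization arguments that follow.
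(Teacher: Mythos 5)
Your overall architecture — the Lax formula, perturbation of near-optimal paths by a straight segment on a terminal slice, and a Kolmogorov--Chentsov upgrade — tracks the paper's strategy. But there is a genuine gap at the heart of the argument: every moment estimate you invoke is a BDG/It\^o bound, and none of them apply, because the near-optimal paths in the Lax formula are \emph{not} adapted to the Brownian filtration. The solution at $(x,t)$ is given by an infimum over \emph{all} Lipschitz paths $\gamma \in \mcl A(y,x,0,t)$ for the fixed sample $B(\cdot,\omega)$; the minimizing $\gamma^*$ depends on the entire trajectory of $B$ on $[0,t]$, so $r \mapsto f(\gamma^*_r)$ is not progressively measurable, $\int f(\gamma^*)\cdot dB$ is not an It\^o integral (it is defined only by integration by parts, as in \eqref{Lzeta}), and the bound $\EE\sup_{r\le t}\abs{\int_0^r f(\gamma^*)\cdot dB}^p\lesssim \nor{f}{\oo}^p$ simply fails. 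The same objection applies to your bound on $\int_{t-h}^{t}[f(\tilde\gamma)-f(\gamma^*)]\cdot dB$ via the pointwise estimate $|f(\tilde\gamma)-f(\gamma^*)|\le 2\nor{f}{\oo}$: without adaptedness you cannot convert a sup-norm bound on the integrand into a moment bound on the integral. You flag adaptedness as a technicality to be ``arranged,'' but it cannot be arranged; restricting the infimum to adapted paths changes the value and no longer represents the viscosity solution.

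This is exactly the obstacle the paper isolates and resolves with Lemma \ref{L:fauxIto}. There, for an \emph{arbitrary} (non-adapted) Lipschitz path, one partitions $[r_1,r_2]$ into subintervals of length $h$ chosen proportional to $\nor{f}{\oo}/\nor{Df}{\oo}$, integrates by parts on each piece, and bounds the result by $\sigma\sum_k X_k + \Lambda h^{1/q}(\eps^{-q}q^{-1}\sum_k X_k^q + \eps^{q'}(q')^{-1}\int|\dot\gamma|^{q'})$ with $X_k$ the oscillation of $B$ on the $k$-th subinterval. The point is that with this choice of $h$, the deterministic parts of these sums depend on $f$ only through the product $K = \nor{f}{\oo}\cdot\nor{Df}{\oo}$, while the centered fluctuations $\sum(X_k - \EE X_k)$ and $\sum(X_k^q - \EE X_k^q)$ are sums of independent variables and give the tail $\sigma^p/\lambda^p$ with $\sigma=\nor{f}{\oo}$. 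A parametrized Kolmogorov lemma (Lemma \ref{L:Kolmogorov}) then upgrades these pointwise moment bounds to a uniform bound, simultaneously over all $\gamma$, $f$, and endpoints, producing the single random variable $\mcl D_{\sigma,\Lambda}$. The rest of the argument (Lemma \ref{L:minimizer}, Proposition \ref{P:Lregularity}) proceeds much as you sketch, but with $\mcl D_{\sigma,\Lambda}$ replacing every BDG bound.

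A secondary point: your stated goal (ii), that the estimates should depend only on $\nor{f}{\oo}$ and not on $\nor{Df}{\oo}$, is also not what the theorem (or any correct argument) achieves. The constants $C_1,C_2$ do depend on $\nor{Df}{\oo}$, but only through the product $\nor{f}{\oo}\cdot\nor{Df}{\oo}$, which is exactly what survives the rescaling $f^\eps = \eps^{1/2}f(\cdot/\eps)$, $Df^\eps = \eps^{-1/2}Df(\cdot/\eps)$. The $\nor{f}{\oo}^p$ in the tail bound is the additional, scale-sensitive factor that decays like $\eps^{p/2}$ in the homogenization application; the product dependence is invisible there because it is $\eps$-independent. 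Without the product structure, which emerges precisely from the $h\sim\nor{f}{\oo}/\nor{Df}{\oo}$ mesh in Lemma \ref{L:fauxIto}, the argument would not be scale-invariant and the theorem would be useless for its intended purpose.
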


For fixed $\eps > 0$, the equation \eqref{E:main} can be rewritten in the form
\[
	u^\eps_t + H(Du^\eps) = f^\eps(x,\omega) \cdot \dot B^\eps(t,\omega) \quad \text{in } \RR^d \times (0,\oo) \times \Omega,
\]
where, for $x \in \RR^d$ and $t \in [0,\oo)$,
\[
	f^\eps(x,\cdot) = \eps^{1/2} f(x/\eps,\cdot) \quad \text{and} \quad B^\eps(t,\cdot) = \eps^{1/2} B(t/\eps,\cdot).
\]
Theorem \ref{T:introregularity} then immediately implies that, for all $\eps > 0$ and $\lambda \ge 1$,
\[
	\mbf P \pars{ \sup_{(x,s),(y,t) \in B_R \times [1/R,R]} \frac{ \abs{ u^\eps(x,s) - u^\eps(y,t)}}{|x-y|^\alpha + |s-t|^\beta} > C_1 + \lambda } \le \frac{C_2 \nor{f}{\oo}^{p} \eps^{p/2}}{\lambda^{p}},
\]
where $C_1$, $C_2$, $\alpha$, and $\beta$ are all independent of $\eps$. 

\subsection{Background}

In \cite{Seeger2,Seeger1}, the author studied general asymptotic problems for equations taking the form
\[
	du^\eps + \sum_{i=0}^m H^i(Du^\eps,x/\eps) \cdot d\zeta^{i,\eps}(t) = 0 \quad \text{in } \RR^d \times (0,\oo),
\]
where each $H^i$ satisfies a self-averaging property in the spatial variable (for example, periodic, almost-periodic, or stationary and ergodic dependence), and, for some path $\zeta \in C([0,\oo), \RR^{m+1})$,
\[
	\zeta^\eps = \pars{ \zeta^{0,\eps}, \zeta^{1,\eps}, \zeta^{2,\eps}, \ldots, \zeta^{m,\eps} } \xrightarrow{ \eps \to 0} \zeta \quad \text{locally uniformly.}
\]
The limiting equations take the form
\[
	d \oline{u} + \sum_{j=1}^M \oline{H}^j(D \oline{u}) \cdot d{\tilde \zeta}^j = 0 \quad \text{in } \RR^d \times (0,\oo),
\]
for some deterministic, spatially homogenous Hamiltonians $\oline{H}^j$, $j = 1,2,\ldots,M$, a path $\tilde \zeta \in C([0,\oo),\RR^M)$, and some $M \in \NN$ possibly large than $m+1$. The results of the present paper can be placed within this framework by setting, for $(p,y,t) \in \RR^d \times \RR^d \times [0,\oo)$, $\eps > 0$, and $i = 1,2,\ldots, m$,
\[
	H^0(p,y) = H(p), \quad \zeta^{0,\eps}(t) = \zeta^{0}(t) = t, \quad H^i(p,y) = f^i(y), \quad \zeta^{i,\eps}(t) = \eps B^i\pars{ \frac{t}{\eps}}, \quad \text{and} \quad \zeta^i(t) = 0.
\]
In this context, the fact that the limiting equation takes the form \eqref{E:mainlimit} with $\oline{H} \ne H$ can be translated as saying that each effective Hamiltonian is determined by the entire collection $(H^i)_{i=0}^m$, a phenomenon which was seen in \cite{Seeger2} for a different class of problems. A form of Theorem \ref{T:introdifferentscalings}(b) was proved in \cite{Seeger1} in the case where $\theta = 0$, with $B$ replaced with a sufficiently mild approximation $B^\eps$ converging to a Brownian motion as $\eps \to 0$.

There is a vast literature on the stochastic homogenization of Hamilton-Jacobi equations like
\begin{equation}\label{E:basicSH}
	u^\eps_t + H\pars{Du^\eps, \frac{x}{\eps} ,\omega} = 0 \quad \text{and} \quad u^\eps_t + H\pars{Du^\eps, \frac{x}{\eps}, \frac{t}{\eps} ,\omega} = 0 \quad \text{in } \RR^d \times (0,\oo)
\end{equation}
set in a stationary-ergodic environment; for qualitative and quantitative results, and many variations and extensions, see \cite{ACS,AS,ATY1, ATY2,CNS,CSo,CST,FS,G,H,JSTfront,LScorrectors,NN,RT, Schwab, S, Z}. The results of the present paper are unique in that the problem is a stochastic partial differential equation, and therefore, the time-dependent forcing term is not only unbounded, but not well-defined point-wise anywhere. 

A specific example of the equations in \eqref{E:basicSH}, and another model for turbulent combustion, is the $G$-equation, for which the level sets of the solutions evolve according to the normal velocity
\[
	1 + V \pars{ \frac{x}{\eps}, \frac{t}{\eps},\omega} \cdot n,
\]
where $V: \RR^d \times [0,\oo) \times \Omega \to \RR^d$ is a stationary-ergodic velocity field and $n \in S^{d-1}$ is the outward unit normal vector to the interface. Under the assumption that $\abs{\mbf E V } < 1$, where $\mbf E$ denotes the expectation of the random field $V$, the evolving region will, on average, expand. In fact, the authors in \cite{CNS, CSo, NN} demonstrate that, over a long time and large range, the velocity is actually enhanced, that is, it is given by $\oline{a}(n)$ for some deterministic $\oline{a} \in C( S^{d-1}, \RR_+)$ satisfying 
\[
	\oline{a}(n) \ge 1 + \mbf E V \cdot n \quad \text{for all } n \in S^{d-1}.
\]
Moreover, under further assumptions on $V$, and for ``most'' directions $n \in S^{d-1}$, the inequality is strict. This should be compared with Theorem \ref{T:introenhancement}, in which an analogous strict enhancement property is observed for all $p \in \RR^d$.

The homogenization of ``viscous'' Hamilton-Jacobi equations, or Hamilton-Jacobi-Bellman equations, like
\begin{equation}\label{E:viscSH}
	\begin{split}
	&u^\eps_t - \eps \tr\left[ A\pars{ \frac{x}{\eps},\omega} D^2 u^\eps \right] + H\pars{Du^\eps, \frac{x}{\eps} ,\omega} = 0 \quad \text{and}\\
	&u^\eps_t - \eps \tr\left[ A\pars{ \frac{x}{\eps}, \frac{t}{\eps},\omega} D^2 u^\eps \right]+ H\pars{Du^\eps, \frac{x}{\eps}, \frac{t}{\eps} ,\omega} = 0 \quad \text{in } \RR^d \times (0,\oo),
	\end{split}
\end{equation}
where $A$ is a symmetric, nonnegative matrix with stationary and ergodic dependence, has also been an active area of study, see for instance \cite{AC, ACrates,ASunbounded, AT, CScorrectors, DK, FFZ, JSTvisc, KRV, KV, LShomog,LSrevisited}. A natural next step is to study the homogenization of equations like
\begin{equation}\label{E:stochvisc}
	u^\eps_t - \eps \tr\left[ A\pars{ \frac{x}{\eps}, \frac{t}{\eps},\omega} D^2 u^\eps \right]+ H\pars{Du^\eps, \frac{x}{\eps}, \frac{t}{\eps} ,\omega} = F^\eps(x,t) \quad \text{in } \RR^d \times (0,\oo),
\end{equation}
where, as in \eqref{scalednoise}, $F^\eps$ is a stationary-ergodic forcing that is white in time. 

When $H(p,x,t) = \frac{1}{2}|p|^2$, $A$ is the identify matrix, and $F^\eps$ is a space-time white noise, \eqref{E:stochvisc} is the KPZ equation, whose well-posedness was established in the seminal works of Hairer on regularity structures \cite{Hrfirst, Hrrs} (see also the work of Gubinelli et al. \cite{GIP} on paracontrolled distributions). The KPZ equation can be related, through the Hopf-Cole transform, to the multiplicative stochastic heat equation. This connection is explored in various works \cite{DGRZ, GB, GRZ, MSZ} that study homogenization problems in which the noise term formally vanishes as $\eps \to 0$, but still has a nontrivial, enhancement-type effect on the deterministic limit. For example, in \cite{GRZ}, the limiting heat equation exhibits an effective diffusivity, and the stochastic heat equation that describes the fluctuations has an effective variance. 

A central part of understanding the macroscopic behavior of stochastically forced Hamilton-Jacobi equations concerns the existence of global solutions of \eqref{E:main} with stationary increments. In the homogenization literature, such solutions are known as correctors, and they play an important role in obtaining error estimates and determining fine properties of the effective Hamiltonian $\oline{H}$; see \cite{DavSic,LScorrectors,CScorrectors}. Additionally, they are related to the long-time behavior and invariant measures for the random dynamical system associated to the equation, as well as the asymptotic slopes of one-sided minimizers of the corresponding Lagrangian system. In this context, the so-called ``shape functional'' is exactly the effective Lagrangian $\oline{L}$, which is the convex conjugate of $\oline{H}$; see Lemma \ref{L:identifyLbar} below. The work of Bakhtin and Khanin \cite{BK}, which deals with exactly such questions, puts forward a conjectured overarching theory for the global behavior of stochastically forced Hamilton-Jacobi equations, and the related forced Burgers equation. These conjectures are motivated by a body of existing results, including, but not limited to, \cite{Bnpoisson,Bn, Bnkicks, BCK,BoK,EK,KhZ}. Within this framework, the homogenization result, Theorem \ref{T:introhomog}, provides insight into the average large scale behavior of the solutions. Moreover, the enhancement property in Theorem \ref{T:introenhancement} (see Theorem \ref{T:enhancement} for a more precise statement) gives a better understanding of the effective Hamiltonian and the related shape functional. We believe that the regularity result, Theorem \ref{T:introregularity} (see also Proposition \ref{P:Lregularity}), and the methods used to prove it, can be an important tool for other parts of this ongoing program.

\subsection{Organization} In Section \ref{S:A}, we list the main assumptions on the data and prove some preliminary results. The properties of the random Lagrangian fields, and especially their regularity, are discussed in Section \ref{S:randomL}. The main tool in this section is a decomposition method for bounding moments of stochastic integrals that are not martingales. The proof of the homogenization result and the identification of the effective Hamiltonian appear in Section \ref{S:homog}, and the enhancement property is proved in Section \ref{S:enhancement}. Finally, the results of Theorem \ref{T:introdifferentscalings} are proved in Section \ref{S:differentscalings}.

\subsection{Notation}

For a domain $U \subset \RR^M \times \RR^N$ and $\alpha,\beta \in (0,1)$, $C^{\alpha}_x C^\beta_y(U)$ is the space of functions $f = f(x,y)$, where $x \in \RR^M$ and $y \in \RR^N$, that are $\alpha$-H\"older continuous in $\RR^M$ and $\beta$-H\"older continuous in $\RR^N$, with the semi-norm
\[
	[f]_{C^{\alpha}_x C^\beta_y(U)} := \sup_{(x,y), (\tilde x,\tilde y) \in U} \frac{ \abs{ f(x,y) - f(\tilde x,\tilde y)}}{|x - \tilde x|^\alpha + |y - \tilde y|^\beta}.
\]
For $k \in \NN$, $C^k_b(U)$ is the space of functions with bounded and continuous derivatives through order $k$, and
\[
	\nor{f}{C^k} := \sum_{i=0}^k \nor{D^i f}{\oo}.
\]
If $X$ is a vector space with norm $\nor{\cdot}{X}$ and $f \in C(\RR,X)$, then, for $-\oo < a < b < \oo$, we write
\[
	\nor{f}{[a,b],X} := \max_{t \in [a,b]} \nor{f(t)}{X}.
\]
In the case where $X = \RR$, if $\alpha \in (0,1)$ and $f \in C^\alpha(\RR)$, we denote the $\alpha$-H\"older semi-norm of a function $f: \RR \to \RR$ on $[a,b] \subset \RR$ by $[f]_{\alpha,[a,b]}$.

For a function $f$ on $\RR$ and $t \in \RR$, we denote interchangeably by $f_t$ and $f(t)$ the value of $f$ at $t$, depending on notational convenience.

For $q \in (1,\oo)$, $q'$ is the conjugate exponent $q/(q-1)$. Given a set $A$, $\ind_A$ denotes the indicator function. The expectation with respect to the probability measure $\mbf P$ is denoted by $\mbf E$. When it does not cause confusion, we suppress the dependence of random variables on the parameter $\omega \in \Omega$.
	
\section{Preliminaries}\label{S:A}

\subsection{The Hamiltonian} We will always assume that
\begin{equation}\label{A:Hestimates}
	\left\{
	\begin{split}
		&H: \RR^d \to \RR \text{ is convex, and, for some $C > 1$ and $q > 1$,}\\
		&\frac{1}{C} |p|^q-C \le H(p) \le C(|p|^q +1) \quad \text{for all } p \in \RR^d.
	\end{split}
	\right.
\end{equation}
The Legendre transform of $H$
\[
	H^*(v) := \sup_{p \in \RR^d} \pars{ p \cdot v - H(p)}
\]
has analogous bounds in terms of the conjugate exponent $q' := q(q-1)^{-1}$, that is, for a possibly different constant $C > 1$, we have
\begin{equation}\label{Hstarbounds}
	\frac{1}{C} |p|^{q'}- C \le H^*(p) \le C(|p|^{q'} +1) \quad \text{for all } p \in \RR^d.
\end{equation}
We shall also impose that, for some $C > 0$,
\begin{equation}\label{DHstar}
	\abs{ H^*(p_1) - H^*(p_2) } \le C \pars{ 1 + |p_1|^{q'-1} + |p_2|^{q'-1} }|p_1 - p_2| \quad \text{for all } p_1,p_2 \in \RR^d.
\end{equation}

\subsection{The random field} 

For the random forcing term
\begin{equation}\label{randomfield}
	F(x,t,\omega) = f(x,\omega) \cdot \dot B(t,\omega) = \sum_{i=1}^m f^i(x,\omega) \dot B^i(t,\omega)
\end{equation}
and the probability measure $\mbf P$, we assume the following:
\begin{equation}\label{A:BM}
	B: [0,\oo) \times \Omega \to \RR^m \quad \text{is a standard Brownian motion},
\end{equation}
\begin{equation}\label{A:fC1}
	f(\cdot,\omega) \in C^1_b(\RR^d,\RR^m) \text{ with probability one},
\end{equation}
\begin{equation}\label{A:fBindependent}
	\text{$f$ and $B$ are independent,}
\end{equation}
and there exists a group of transformations
\[
	(\tau_x)_{x \in \RR^d} : \Omega \mapsto \Omega
\]
satisfying
\begin{equation}\label{A:group}
	\tau_{x+y} = \tau_x \circ \tau_y \quad \text{for all }x,y \in \RR^d,
\end{equation}
\begin{equation}\label{A:shifts}
	f(x,\tau_y\omega) = f(x+y,\omega) \quad \text{and} \quad B(\cdot,\tau_y \omega) = B(\cdot,\omega) \quad \text{for all } x,y \in \RR^d \text{ and } \omega \in \Omega,
\end{equation}
\begin{equation}\label{A:stationary}
	\mbf P \circ \tau_x = \mbf P \quad \text{for all } x \in \RR^d,
\end{equation}
and
\begin{equation}\label{A:muergodic}
	\text{if } A \in \sigma(f) \text{ and } \tau_x A = A \text{ for all } x \in \RR^d, \text{ then } \mbf P\pars{ A }\in \{0, 1\}.
\end{equation}
Here, $\sigma(f) \subset \mbf F$ is the $\sigma$-algebra generated by the random field $f$. 

To avoid long lists of assumptions later on, we introduce the assumption that
\begin{equation}\label{A:randomfield}
	\text{the random field $F$ satisfies \eqref{randomfield} - \eqref{A:muergodic}.}
\end{equation}

Without loss of generality, it can be assumed that $\Omega$ and $\mbf P$ have a product-like structure. That is, we may take $\Omega = X \times Y$ and $\mbf P = \mu \otimes \nu$, where $X = C^1_b(\RR^d \times \RR^m)$, $Y = C([0,\oo),\RR^m)$, $\mu$ is a measure on $X$ that is stationary and ergodic with respect to translations in $\RR^d$, and $\nu$ is the Wiener measure.

The stationarity and ergodicity of the shifts in space imply that, for some $M_0 > 0$,
\begin{equation}\label{Fboundedas}
	\mbf P \pars{ \nor{f}{C^1} \le M_0 } = 1.
\end{equation}

\subsection{Stability with respect to forcing terms} \label{S:stability}
We next address the issue of well-posedness for viscosity solutions of the initial value problems
\begin{equation}\label{E:genericequation}
	v_t + H(Dv) = \frac{\del}{\del t}\zeta(x,t) \quad \text{in } \RR^d \times (0,\oo) \quad \text{and} \quad u(\cdot,0) = u_0,
\end{equation}
where $u_0 \in BUC(\RR^d)$, $H$ satisfies \eqref{A:Hestimates}, and $\zeta \in C([0,\oo), C^1_b(\RR^d))$. In particular, $\zeta$ is not sufficiently regular for \eqref{E:genericequation} to be covered by the standard Crandall-Lions theory \cite{CL} of viscosity solutions of Hamilton-Jacobi equations. Instead, \eqref{E:genericequation} is a special case of the ``pathwise'' equations studied by Lions and Souganidis \cite{LS1,LS2,LS4,LS3,Snotes}. In the present situation, the well-posedness and stability of the equation is more straightforward than in these works, as a consequence of the additive structure of the noise and its smoothness in space.

For $u_0 \in BUC(\RR^d)$ fixed, let
\begin{equation}\label{solutionoperator}
	S_{u_0}: C^1([0,\oo),C^1_b(\RR^d)) \mapsto C(\RR^d \times [0,\oo))
\end{equation}
be the solution operator for \eqref{E:genericequation}, that is, $v = S_{u_0}(\zeta)$. 

\begin{lemma}\label{L:forcingstability}
	Assume $H$ satisfies \eqref{A:Hestimates}. Then $S_{u_0}$ extends continuously to the space $C([0,\oo),C^1_b(\RR^d))$.
\end{lemma}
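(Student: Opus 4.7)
The plan is to reduce \eqref{E:genericequation} to a classical Hamilton-Jacobi equation with continuous, bounded coefficients via subtraction of the forcing term, after which the standard stability theory of viscosity solutions can be applied. For $\zeta \in C^1([0,\oo), C^1_b(\RR^d))$, the substitution $w(x,t) := v(x,t) - \zeta(x,t)$ transforms \eqref{E:genericequation} into
\[
	w_t + H(Dw + D\zeta(x,t)) = 0 \quad \text{in } \RR^d \times (0,\oo), \qquad w(\cdot,0) = u_0 - \zeta(\cdot,0).
\]
The transformed Hamiltonian $\tilde H(p,x,t) := H(p + D\zeta(x,t))$ is continuous, convex, and super-linear in $p$ uniformly on bounded time intervals, and the initial datum $u_0 - \zeta(\cdot,0)$ lies in $BUC(\RR^d)$, so the classical Crandall--Lions theory yields a unique viscosity solution $w$. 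Conversely, since $\zeta_t \in C(\RR^d \times [0,\oo))$, the original equation is in the standard form $v_t + [H(Dv) - \zeta_t(x,t)] = 0$, and the substitution is a bijection between viscosity solutions of the two problems; in particular, $S_{u_0}(\zeta) = w + \zeta$.

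The key observation is that the reduced equation for $w$ depends on $\zeta$ only through $D\zeta$ and the initial slice $\zeta(\cdot,0)$, so it continues to make sense for any $\zeta \in C([0,\oo), C^1_b(\RR^d))$. I would use this to define the candidate extension $\hat S_{u_0}(\zeta) := w + \zeta$ for every such $\zeta$, where $w$ is the unique viscosity solution of the reduced equation. The previous step shows that $\hat S_{u_0}$ agrees with $S_{u_0}$ on $C^1([0,\oo), C^1_b(\RR^d))$, so it remains only to verify that $\hat S_{u_0}$ is continuous for the target topology of locally uniform convergence.

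For continuity, suppose $\zeta^n \to \zeta$ in $C([0,T], C^1_b(\RR^d))$ for every $T > 0$. Then $\tilde H_n(p,x,t) := H(p + D\zeta^n(x,t))$ converges to $\tilde H(p,x,t) := H(p + D\zeta(x,t))$ uniformly on every compact subset of $\RR^d \times \RR^d \times [0,T]$, the initial data $u_0 - \zeta^n(\cdot,0)$ converge uniformly to $u_0 - \zeta(\cdot,0)$, and the uniform bound on $\nor{D\zeta^n}{\oo}$ combined with comparison against linear sub- and super-solutions of the form $u_0(x) - \zeta^n(x,0) \pm Ct$ gives $n$-independent $L^\infty$ bounds on the solutions $w^n$. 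The classical stability of viscosity solutions under uniform convergence of the data (e.g.\ via the half-relaxed limit method) then yields $w^n \to w$ locally uniformly on $\RR^d \times [0,\oo)$, whence $\hat S_{u_0}(\zeta^n) = w^n + \zeta^n \to w + \zeta = \hat S_{u_0}(\zeta)$ locally uniformly. The main technical point is ensuring that these a priori bounds and the application of the stability result are uniform on the unbounded domain $\RR^d$; this is handled by using the super-linear growth of $H$ and the uniform $C^1_b$ control on $\zeta^n$ to propagate a common modulus of continuity for the $w^n$, after which the standard stability argument on compact sets in \cite{CL} applies verbatim.
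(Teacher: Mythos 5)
Your proof is correct and follows essentially the same route as the paper: subtract the forcing term to reduce \eqref{E:genericequation} to a classical Hamilton-Jacobi equation whose coefficients depend on $\zeta$ only through $D_x\zeta$ (and, in your version, the initial slice $\zeta(\cdot,0)$), and then invoke classical stability of viscosity solutions. The only cosmetic difference is that the paper subtracts $\zeta(x,t)-\zeta(x,0)$ rather than $\zeta(x,t)$, which keeps the initial datum fixed at $u_0$.
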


\begin{proof}
	If $v$ is the solution of \eqref{E:genericequation}, then the function $\tilde v$ defined by
	\[
		\tilde v(x,t) := v(x,t) - \zeta(x,t) + \zeta(x,0)
	\]
	is a classical viscosity solution of the initial value problem
	\[
		\tilde v_t + H\pars{ D \tilde v + D_x \zeta(x,t) - D_x \zeta(x,0)} = 0 \quad \text{in } \RR^d \times (0,\oo) \quad \text{and} \quad \tilde v(\cdot,0) = u_0 \quad \text{in } \RR^d.
	\]
	The claim now follows from classical arguments from the theory of viscosity solutions.
\end{proof}

The stability for the equation can also be directly seen on the level of the representation formula for the solution operator. For $(x,y,s,t) \in \RR^d \times \RR^d \times [0,\oo) \times [0,\oo)$ with $s < t$, we define
\[
	\mcl A(x,y,s,t) := \left\{ \gamma \in W^{1,\oo}([s,t],\RR^d) : \gamma_s = x, \; \gamma_t = y \right\}
\]
and
\begin{equation}\label{preLzeta}
	L(x,y,s,t;\zeta) := \inf \left\{ \int_s^t \left[ H^*(\dot \gamma_r) + \frac{\del\zeta}{\del r}(\gamma_r,r)\right]dr : \gamma \in \mcl A(x,y,s,t) \right\},
\end{equation}
and then, as shown in \cite{Lbook}, for example,
\begin{equation}\label{pathwiseformula}
	S_{u_0}(\zeta)(x,t) = \inf_{y \in \RR^d} \pars{ u_0(y) + L(y,x,s,t;\zeta) }.
\end{equation}
Lemma \ref{L:forcingstability} then follows from Lemma \ref{L:distancestability} below, which establishes the stability of the Lagrangian with respect to the forcing term.

\begin{lemma}\label{L:distancestability}
	Assume \eqref{A:Hestimates}, let $L(\cdot,\zeta)$ be defined by \eqref{preLzeta}, and fix $(x,y,s,t) \in \RR^d \times \RR^d \times [0,\oo) \times [0,\oo)$ with $s < t$. Then the map
	\[
		C^1([0,\oo), C^1_b(\RR^d)) \ni \zeta \mapsto L(x,y,s,t,\zeta) \in \RR
	\]
	extends continuously to $\zeta \in C([0,\oo), C^1_b(\RR^d))$.
\end{lemma}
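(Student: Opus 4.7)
The plan is to derive an alternative representation of $L(x,y,s,t;\zeta)$ in which the time derivative $\del\zeta/\del r$ does not appear, take this as the definition of the extension, and then prove its continuity by a direct compactness argument at the level of minimizing paths. For any absolutely continuous $\gamma \in \mcl A(x,y,s,t)$ and $\zeta \in C^1([0,\oo), C^1_b(\RR^d))$, the chain rule together with the boundary conditions $\gamma_s = x$, $\gamma_t = y$ gives
\begin{equation*}
	\int_s^t \frac{\del\zeta}{\del r}(\gamma_r, r)\, dr = \zeta(y,t) - \zeta(x,s) - \int_s^t D_x\zeta(\gamma_r,r) \cdot \dot\gamma_r\, dr,
\end{equation*}
so that $L(x,y,s,t;\zeta) = \zeta(y,t) - \zeta(x,s) + I(\zeta)$, where
\begin{equation*}
	I(\zeta) := \inf_{\gamma \in \mcl A(x,y,s,t)} \int_s^t \bigl[ H^*(\dot\gamma_r) - D_x\zeta(\gamma_r,r) \cdot \dot\gamma_r \bigr] \, dr.
\end{equation*}
The right-hand side makes sense for every $\zeta \in C([0,\oo), C^1_b(\RR^d))$, and I would adopt it as the definition of the extension; it then suffices to show that $\zeta \mapsto I(\zeta)$ is continuous on $C([0,\oo), C^1_b(\RR^d))$.

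The core step is a uniform velocity bound on near-minimizers. Combining the coercivity $H^*(v) \ge C^{-1}|v|^{q'} - C$ from \eqref{Hstarbounds} with the Young-type estimate $|D_x\zeta|\cdot |\dot\gamma| \le \delta|\dot\gamma|^{q'} + C_\delta |D_x\zeta|^q$ and choosing $\delta > 0$ small enough to absorb the kinetic term into the lower bound on $H^*$ yields, for every $\gamma \in \mcl A(x,y,s,t)$,
\begin{equation*}
	\int_s^t \bigl[ H^*(\dot\gamma_r) - D_x\zeta(\gamma_r,r) \cdot \dot\gamma_r \bigr] \, dr \ge \frac{1}{2C} \int_s^t |\dot\gamma_r|^{q'}\, dr - C'(t-s)\bigl( 1 + \nor{D_x\zeta}{[s,t],C_b}^q \bigr).
\end{equation*}
Testing with the affine path $r \mapsto x + \frac{r-s}{t-s}(y-x)$ furnishes an upper bound on $I(\zeta)$ depending only on $x,y,s,t$ and $\nor{\zeta}{[s,t],C^1_b}$. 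Consequently, if $\zeta^n \to \zeta$ in $C([0,\oo), C^1_b(\RR^d))$ and $\gamma^n$ is a $1/n$-near-minimizer for $I(\zeta^n)$, then $\int_s^t |\dot\gamma^n_r|^{q'}\, dr$ is bounded uniformly in $n$, and H\"older's inequality converts this into a uniform bound on $\int_s^t |\dot\gamma^n_r|\, dr$.

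With the uniform velocity bound in hand, the stability conclusion is immediate: using $\gamma^n$ as a competitor for $I(\zeta)$ gives
\begin{equation*}
	I(\zeta) - I(\zeta^n) \le \frac{1}{n} + \nor{D_x\zeta^n - D_x\zeta}{[s,t],C_b} \int_s^t |\dot\gamma^n_r|\, dr \xrightarrow{n \to \oo} 0,
\end{equation*}
and symmetrizing the argument (starting from a near-minimizer for $I(\zeta)$) yields $I(\zeta^n) \to I(\zeta)$; the boundary values $\zeta^n(y,t) - \zeta^n(x,s)$ converge trivially. I expect the main obstacle to be the uniform velocity bound just described: it is precisely there that the superlinearity of $H^*$ in \eqref{Hstarbounds} enters in an essential way, as it is what allows the linear-in-$\dot\gamma$ term $D_x\zeta \cdot \dot\gamma$ to be absorbed into $H^*(\dot\gamma)$ without destroying the coercivity, uniformly over the sequence $\zeta^n$.
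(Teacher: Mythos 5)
Your proof is correct and follows the same route as the paper: integrating by parts (equivalently, your chain-rule computation) to eliminate $\del\zeta/\del r$ in favor of $D_x\zeta$, and then using the superlinearity of $H^*$ to get uniform $L^{q'}$ bounds on the velocities of near-minimizers, from which continuity follows by an exchange argument. The paper compresses the second half into ``classical arguments, in view of the super-linearity of $H^*$''; you have simply spelled out exactly those arguments, correctly identifying the uniform velocity bound as the crux.
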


\begin{proof}
	Integrating \eqref{preLzeta} by parts yields
\begin{equation}\label{Lzeta}
	L(x,y,s,t;\zeta) = \zeta(y,t) - \zeta(x,s) + \inf \left\{ \int_s^t \left[ H^*(\dot \gamma_r) - D_x \zeta(\gamma_r,r)\cdot \dot \gamma_r \right]dr : \gamma \in \mcl A(x,y,s,t) \right\}.
\end{equation}
The result now follows from classical arguments, in view of the super-linearity of $H^*$ and the continuity of $D_x\zeta$ in both variables.
\end{proof}

\section{Properties of the random Lagrangian}\label{S:randomL}

We continue the discussion of the Lagrangians defined at the end of Section \ref{S:A}, and we investigate the case where the forcing term is random and given by
\[
	\zeta(x,t,\omega) := f(x) \cdot B(t,\omega) = \sum_{i=1}^m f^i(x) B^i(t,\omega) \quad \text{for } (x,t,\omega) \in \RR^d \times [0,\oo) \times \Omega,
\]
where $f \in C^1_b(\RR^d)$ and $B$ is a standard Brownian motion on the probability space $(\Omega,\mbf F, \mbf P)$. Throughout this section, $f$ is non-random, and the estimates will be uniform over certain bounded sets of $C^1_b(\RR^d)$.

We obtain uniform growth bounds and regularity estimates for the random Lagrangian
\begin{equation}\label{Lf}
	L_f(x,y,s,t,\omega) := L(x,y,s,t; f \cdot B(\omega)) = \inf\left\{ \int_s^t H^*(\dot \gamma_r)dr + \int_s^t f(\gamma_r)\cdot dB_r(\omega) : \gamma \in \mcl A(x,y,s,t) \right\}.
\end{equation}
In view of Lemma \ref{L:distancestability}, $L_f$ is well-defined for any continuous sample path $B(\cdot,\omega)$.

\subsection{Integrating non-adapted paths against Brownian motion}

The methods used below resemble those of \cite{C, Schwab, KV}, which involve the manipulation of almost-minimizers of the Lagrangian action. The new difficulty is to find a way to control, for an arbitrary Lipschitz process $\gamma$, integrals of the form
\begin{equation}\label{badintegral}
	\int_{r_1}^{r_2} f(\gamma_r) \cdot dB_r.
\end{equation}
If $\gamma$ is adapted with respect to the natural filtration of the Brownian motion, then standard It\^o calculus implies that the moments of \eqref{badintegral} can be bounded in terms of $\nor{f}{\oo}$, independently of the regularity of $f$ or $\gamma$. Therefore, as a consequence of Kolmogorov's continuity criterion (see \cite{SV}), for any $\alpha \in (0,1/2)$, \eqref{badintegral} is $\alpha$-H\"older continuous in $r_1$ and $r_2$, and, for all $p \ge 1$, $T > 0$, $\lambda \ge 1$, and some $C = C(p,\alpha,T) > 0$,
\begin{equation}\label{desiredtailbound}
	\mbf P \pars{ \sup_{r_1,r_2 \in [0,T]} \frac{1}{|r_1-r_2|^\alpha}\abs{ \int_{r_1}^{r_2} f(\gamma_r) \cdot dB_r} > \lambda} \le \frac{C\nor{f}{\oo}^p}{\lambda^p}.
\end{equation}
However, if $\gamma$ is not adapted, as is the case for the almost-minimizers in general, then It\^o calculus does not apply, and the regularity of $f$ and $\gamma$ enter into the moment estimates.

In order to control \eqref{badintegral} in such a way that allows us to obtain scale-invariant estimates for $L_f$, we decompose the integral into three parts: one that can be bounded by a deterministic constant, one which measures the regularity of $\gamma$, and a final random piece whose probability tails satisfy bounds resembling \eqref{desiredtailbound}. Crucially, the various constants depend on $\nor{f}{C^1}$ only through an upper bound for the product $\nor{f}{\oo} \cdot \nor{Df}{\oo}$.

For $\sigma > 0$ and $\Lambda > 0$, define
\begin{equation}\label{C1subspace}
	F_{\sigma,\Lambda} := \left\{ f \in C^1_b(\RR^d,\RR^m) : \nor{f}{\oo} \le \sigma \text{ and } \nor{Df}{\oo} \le \Lambda \right\}.
\end{equation}

\begin{lemma}\label{L:fauxIto}
	Fix $T > 0$, $\sigma_0 > 0$, $K > 0$, $q > 1$, and $\alpha \in (0,1/2)$. Then there exists a constant $M = M(T,\sigma_0, K,q,\alpha) > 0$ and, for every $\sigma \in (0,\sigma_0)$ and $\Lambda > 0$ satisfying $\sigma \Lambda = K$, a random variable 
	\[
		\mcl{D}_{\sigma,\Lambda}: \Omega \to \RR_+
	\]
	such that
	\begin{enumerate}[(a)]
	\item for any $p \ge 1$ and some constant $C = C(T,\sigma_0,K,p,q,\alpha) > 0$,
	\[
		\mbf P( \mcl D_{\sigma,\Lambda} \ge \lambda) \le \frac{C \sigma^p}{\lambda^p} \quad \text{for all } \lambda \ge 1,
	\]
	and
	\item for all $\gamma \in W^{1,\oo}([0,T],\RR^d)$, $f \in F_{\sigma,\Lambda}$, $\delta \in (0,1)$, and $0 \le s \le r_1 \le r_2 \le t \le T$, 
	\[
		\abs{ \int_{r_1}^{r_2} f(\gamma_r) \cdot dB_r } \le \pars{ M\delta^{q'}\int_{s}^{t} |\dot \gamma_r|^{q'}dr + \frac{M + \mcl D_{\sigma,\Lambda}}{\delta^{q} } }(r_2-r_1)^\alpha.
	\]
	\end{enumerate}
\end{lemma}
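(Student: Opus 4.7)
The plan is to work pathwise starting from integration by parts. Since $f \circ \gamma$ is Lipschitz in $r$ and $B$ is continuous, the integral is well defined pathwise and satisfies
\[
\int_{r_1}^{r_2} f(\gamma_r)\cdot dB_r = f(\gamma_{r_2})\cdot(B_{r_2}-B_{r_1}) - \int_{r_1}^{r_2} Df(\gamma_r)\dot\gamma_r \cdot (B_r-B_{r_1})\,dr,
\]
which will be my starting point. I would then estimate the two pieces separately.

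The boundary term is bounded immediately by $\sigma\nor{B}{\alpha,[0,T]}(r_2-r_1)^\alpha$, and by Kolmogorov's continuity theorem the H\"older seminorm $\nor{B}{\alpha,[0,T]}$ has moments of all orders with Gaussian-type tails. Thus $\sigma \nor{B}{\alpha,[0,T]}$ already has the form required of $\mcl D_{\sigma,\Lambda}$: a $\sigma$-multiple of a Brownian functional whose moments are bounded uniformly in $\sigma$, giving the tail estimate $\mbf P(\sigma\nor{B}{\alpha,[0,T]}\ge\lambda)\le C_p\sigma^p/\lambda^p$.

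For the volume integral, a direct application of the bound $|Df|\le\Lambda$ followed by $|B_r-B_{r_1}|\le\nor{B}{\alpha}(r-r_1)^\alpha$, H\"older's inequality with exponents $(q,q')$, and Young's inequality with parameter $\delta$ produces a split of the form
\[
\pars{ C \delta^{q'} E + C \Lambda^q \nor{B}{\alpha,[0,T]}^q T/\delta^q } (r_2-r_1)^\alpha, \quad E := \int_s^t |\dot\gamma|^{q'}\,dr.
\]
This has the correct structure in $\delta$ and $E$, but the residual is proportional to $\Lambda^q = (K/\sigma)^q$, which blows up as $\sigma\to 0$ and fails to give the $\sigma^p$-scale tails required of the random piece.

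The main technical difficulty, and the content of the lemma, is to refine this naive estimate so as to exploit $\nor{f}{\oo}\le\sigma$ rather than merely $\nor{Df}{\oo}\le\Lambda$. I would partition $[r_1,r_2]$ dyadically at scales $h_n = T/2^n$, and on each sub-interval $[t_k,t_{k+1}]$ apply integration by parts locally, so that the contribution can alternately be bounded by $\sigma|B_{t_{k+1}}-B_{t_k}|$ (using $\nor{f}{\oo}\le\sigma$) or by $\Lambda \cdot \omega_B(h_n)\cdot \int_{t_k}^{t_{k+1}}|\dot\gamma|\,dr$ (using $\nor{Df}{\oo}\le\Lambda$ and the local oscillation of $B$). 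After summing these local bounds, using H\"older's inequality to re-introduce the energy $E$, applying Young's inequality, and finally optimizing the choice of scale $h_n$ against $\delta$ and the constraint $\sigma\Lambda=K$, the contributions should rearrange so that the random residual is captured by a Brownian functional that scales linearly in $\sigma$. Taking a weighted supremum over the countable family of dyadic scales then defines the random variable $\mcl D_{\sigma,\Lambda}$; Gaussian concentration applied at each scale, together with a summable union bound over scales, then yields the claimed estimate $\mbf P(\mcl D_{\sigma,\Lambda}\ge\lambda)\le C_p\sigma^p/\lambda^p$ for every $p\ge 1$.
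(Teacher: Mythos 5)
Your plan correctly identifies the geometric mechanism at the heart of the paper's proof: the naive global integration by parts fails because the remainder carries a factor $\Lambda = K/\sigma$, so one must partition $[r_1,r_2]$ into subintervals, integrate by parts locally (bounding the boundary pieces by $\|f\|_\infty \le \sigma$ and the remainders by $\|Df\|_\infty$ times the local oscillation of $B$), and then optimize the mesh size under the constraint $\sigma\Lambda = K$. This is exactly the paper's strategy, with one specific mesh $h \sim (\sigma/\Lambda)(r_2-r_1)^{1/q'}$ rather than a dyadic family, plus a separate treatment of the regime $r_2 - r_1 \le (\sigma/\Lambda)^q$ in which the mesh does not fit.

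However, there is a genuine gap in the step that produces the $\sigma^p$ tail, and your sketch as written points in the wrong direction. After the partition and Young's inequality one faces sums of the form $\sigma\sum_{k=1}^N X_k$ and (a $\Lambda$-weighted multiple of) $\sum_{k=1}^N X_k^q$, where $X_k$ is the oscillation of $B$ over the $k$-th subinterval. If you control each $X_k$ pathwise by $\|B\|_{\alpha,[0,T]}\,h^\alpha$ or by the modulus $\omega_B(h_n)$, as your sketch does, then substituting $h \sim (\sigma/\Lambda)(r_2-r_1)^{1/q'}$ and $\sigma\Lambda = K$ leaves a factor $K^{1-\alpha}\sigma^{2\alpha-1}$ (and similarly with $\alpha$ replaced by $\alpha q - (q-1)/2$ in the $q$-power term), and since $\alpha < 1/2$ these exponents are negative: the bound blows up as $\sigma \to 0$ and cannot yield $\mbf P(\mcl D_{\sigma,\Lambda}\ge\lambda) \le C\sigma^p/\lambda^p$. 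The linear-in-$\sigma$ scaling is \emph{not} available from pathwise H\"older control. What is required is to center the sums at their means, writing $\sum X_k = \sum \mbf E X_k + \sum(X_k - \mbf E X_k)$ (and likewise for $\sum X_k^q$), and to use that the $X_k$'s are independent because they involve disjoint Brownian increments. The deterministic parts $\sum\mbf E X_k \sim Nh^{1/2}$ and $\sum\mbf E X_k^q \sim Nh^{q/2}$ go into the constant $M$, while the centered sums satisfy $\mbf E|\sum(X_k - \mbf E X_k)|^p \lesssim (N^{1/2}h^{1/2})^p$ and $\mbf E|\sum(X_k^q - \mbf E X_k^q)|^p \lesssim (N^{1/2}h^{q/2})^p$, a gain of $N^{-1/2}$ over the pathwise estimate that is exactly what converts $\Lambda$ into $\sigma$. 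Your phrase ``Gaussian concentration at each scale'' gestures vaguely at this but neither names the centering nor carries out the exponent bookkeeping; the calculation above shows the claim is false without it. (Also, for $q>2$ the variables $X_k^q$ are not sub-Gaussian, so the paper uses $L^p$ moment bounds rather than Gaussian tail bounds.) A further point: the random variable $\mcl D_{\sigma,\Lambda}$ must not depend on the path $\gamma$, the function $f$, or the endpoints $s,t,\delta$; the paper enforces this through a parametrized Kolmogorov criterion (Lemma~\ref{L:Kolmogorov}), taking the supremum over those parameters \emph{inside} the moment estimate before applying the chaining. Your proposal of taking a supremum over dyadic scales afterwards would need the same care to avoid reintroducing $\gamma$-dependence.
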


We note that there is a different random variable $\mcl D_{\sigma,\Lambda,T,\sigma_0,q,\alpha}$ corresponding to each choice of $(\sigma,\Lambda,T,\sigma_0,q,\alpha)$ satisfying the hypotheses of Lemma \ref{L:fauxIto}. However, we suppress the dependence on all but $\sigma$ and $\Lambda$, as the dependence of $\mcl D_{\sigma,\Lambda}$ on the other parameters will not play a role. The important aspect of the random variable $\mcl D_{\sigma,\Lambda}$ is not its exact formula (see \eqref{theD} below), but rather, the nature of the estimates it satisfies as claimed by the above lemma, and the fact that it is does not depend on the Lipschitz path $\gamma$ or $f \in F_{\sigma,\Lambda}$.

In order to prove Lemma \ref{L:fauxIto}, we will need a parameter-dependent generalization of the classical Kolmogorov continuity criterion. The proof follows a similar method, but we present it here for the sake of completeness.

\begin{lemma}\label{L:Kolmogorov}
	For some parameter set $\mcl M$, let $(M_\mu)_{\mu \in \mcl M}: \Omega \to \RR_+$ and $(Z_\mu)_{\mu \in \mcl M}: [0,T] \times \Omega \to \RR$ be such that, for some positive constants $m > 0$, $\beta \in (0,1)$, and $p \ge 1$,
	\[
		\sup_{0 \le s < t \le T} \mbf E \left[ \sup_{\mu \in \mcl M}\pars{  \frac{ |Z_\mu(t) - Z_\mu(s)|}{(t-s)^{\beta+1/p}} - M_\mu}_+^p \right] \le m.
	\]
	Then, for all $\alpha \in (0,\beta)$, there exist constants $C_1 = C_1(\alpha,T)$ and $C_2 = C_2(p,\alpha,\beta,T) > 0$ such that, for all $\lambda \ge 1$,
	\[
		\mbf P\pars{ \sup_{\mu \in \mcl M} \pars{ [Z_\mu]_{\alpha,[0,T]} - C_1 M_\mu} > \lambda } \le \frac{C_2 m}{\lambda^p}.
	\]
\end{lemma}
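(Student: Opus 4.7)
The plan is to follow the classical dyadic chaining proof of Kolmogorov's continuity criterion, carrying along the parameter $\mu$ and the subtracted terms $M_\mu$. The single technical twist is that, when passing from one dyadic scale to the next, one can only interchange the parameter supremum with a dyadic maximum at the cost of summing: $(\sup_\mu \max_i a_{i,\mu})^p \le \sum_i \sup_\mu a_{i,\mu}^p$. This cost is what the extra $1/p$ in the exponent $\beta+1/p$ of the hypothesis is designed to absorb.

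First I would discretize. For each $n \ge 0$ set $D_n := \{k 2^{-n}T : 0 \le k \le 2^n\}$ and
\[
	Y_{n,\mu} := \max_{0 \le i < 2^n} \pars{ \frac{|Z_\mu((i+1)2^{-n}T) - Z_\mu(i\, 2^{-n}T)|}{(2^{-n}T)^{\beta+1/p}} - M_\mu }_+ .
\]
Applying the hypothesis to each of the $2^n$ intervals of length $2^{-n}T$ and using the pointwise bound above to move $\sup_\mu$ inside at the price of a sum, one obtains $\mbf E\bigl[(\sup_\mu Y_{n,\mu})^p\bigr] \le 2^n m$.

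Next is the chaining step. For $s < t$ in $[0,T]$, take $n_0$ with $2^{-n_0-1}T < t-s \le 2^{-n_0}T$ and dyadic approximants $s_n,t_n \in D_n$, assuming continuity of $Z_\mu$ at $s,t$ (which may be reduced to the dyadic case and then extended). A telescoping of $Z_\mu(t_{n+1})-Z_\mu(t_n)$ and $Z_\mu(s_{n+1})-Z_\mu(s_n)$, together with the definition of $Y_{n,\mu}$ and a geometric sum, yields
\[
	|Z_\mu(t)-Z_\mu(s)| \le C_1(\alpha,\beta,T)\, M_\mu (t-s)^{\beta+1/p} + C(\alpha,\beta,T)(t-s)^\alpha \sum_{n \ge 0} 2^{-n(\beta+1/p-\alpha)} Y_{n,\mu},
\]
where the bound on $(t-s)^{\beta+1/p-\alpha}$ by $T^{\beta+1/p-\alpha}$ absorbs the $M_\mu$ contribution into $C_1 M_\mu$ after dividing by $(t-s)^\alpha$. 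Taking the supremum over $s,t$ and then over $\mu$ gives
\[
	\sup_\mu \pars{ [Z_\mu]_{\alpha,[0,T]} - C_1 M_\mu } \le C\sum_{n \ge 0} 2^{-n(\beta+1/p-\alpha)} \sup_\mu Y_{n,\mu}.
\]

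For the probabilistic estimate I would apply Minkowski's inequality and the Step 1 bound:
\[
	\left\| \sum_{n \ge 0} 2^{-n(\beta+1/p-\alpha)} \sup_\mu Y_{n,\mu} \right\|_p \le m^{1/p} \sum_{n \ge 0} 2^{-n(\beta-\alpha)},
\]
and the series is convergent precisely because $\alpha < \beta$. Markov's inequality then yields the claimed tail bound with $C_2 = C_2(p,\alpha,\beta,T)$. The \emph{main obstacle} is purely bookkeeping: keeping track of the suprema over $\mu$ through both the discretization step (where we pay a factor $2^{n/p}$) and the chaining sum (which contributes $2^{-n(\beta+1/p-\alpha)}$), and recognizing that the gap $\beta-\alpha > 0$ is exactly what makes the chained series summable. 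The extension from dyadic $s,t$ to arbitrary $s,t \in [0,T]$ is automatic once the dyadic H\"older bound is known, since it forces continuity of each $Z_\mu$ and the H\"older semi-norm is lower semicontinuous under pointwise limits.
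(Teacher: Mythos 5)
Your proof is correct. The route you take is the moment/$L^p$ version of dyadic chaining: you pay the $2^{n/p}$ factor by moving $\sup_\mu$ through the max of $2^n$ increments, establish the almost-sure bound $\sup_\mu([Z_\mu]_\alpha-C_1M_\mu)\le C\sum_n 2^{-n(\beta+1/p-\alpha)}\sup_\mu Y_{n,\mu}$, and then close via Minkowski and Chebyshev. The paper's proof instead builds the tail parameter $\lambda$ in from the start: it defines, for each dyadic scale $n$, a ``good'' event $\mcl A_n$ on which all scale-$n$ increments are bounded by $(M_\mu + A\lambda)|s-t|^\alpha$, bounds $\mbf P(\Omega\setminus\mcl A_n)$ by Chebyshev plus a union bound over the $2^n$ pairs (the $+1/p$ in the exponent kills this $2^n$ factor just as in your Step~1), and then does the chaining on $\bigcap_n\mcl A_n$, finally summing $\mbf P(\mcl A_n^c)$ over $n$ using the summability from $\alpha<\beta$. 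The two proofs are interchangeable in content: both isolate the identity ``exponent $\beta+1/p$ absorbs the $2^n$ scale count,'' and both exploit $\alpha<\beta$ for the geometric sum. Your version needs Minkowski but avoids the calibration constant $A(\alpha)$; the paper's version avoids Minkowski but fixes $\lambda$ throughout. Minor bookkeeping aside (your inline inequality is an equality $\sup_\mu\max_i=\max_i\sup_\mu$ followed by $(\max_i b_i)^p\le\sum_i b_i^p$; and your $C_1$ picks up a $T^{\beta+1/p-\alpha}$ factor and hence depends on $\beta$ and $p$ as well, which is harmless), there is no gap.
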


\begin{proof}
	Without loss of generality we take $T = 1$. For $n = 0,1,2,\ldots$, define 
	\[
		D_n := \left\{ \frac{k}{2^n} : k = 0,1,2,\ldots, 2^n \right\} \quad \text{and} \quad D := \bigcup_{n = 0}^\oo  D_n.
	\]
	For some constant $A = A(\alpha)> 0$ to be determined and for each $n = 0,1,2,\ldots$, define the event
	\[
		\mcl A_n := \left\{ |Z_\mu(s) - Z_\mu(t)| \le (M_\mu+A\lambda)|s-t|^\alpha \text{ for all $\mu \in \mcl M$ and $\{s,t\} \subset D_n$ such that $|s-t| = 2^{-n}$} \right\}.
	\]
	
	Fix a pair $\{s,t\}$ in $D_n$ satisfying $|s-t| = 2^{-n}$. Then
	\begin{align*}
		\mbf P &\pars{ |Z_\mu(s) - Z_\mu(t)| > (M_\mu+A\lambda) |s-t|^\alpha \text{ for some } \mu \in \mcl M}\\
		&\le \mbf P \pars{ \sup_{\mu \in \mcl M} \pars{ \frac{|Z_\mu(s) - Z_\mu(t)|}{|s-t|^{\beta + 1/p} } - M_\mu}^p_+ > (A\lambda)^p 2^{n(1+ (\beta - \alpha) p)}  }\\
		&\le mA^{-p} \lambda^{-p} 2^{-n(1 + (\beta- \alpha) p)},
	\end{align*}
	and therefore,
	\[
		\mbf P \pars{ \Omega \backslash \mcl A_n} \le mA^{-p} \lambda^{-p} 2^{-np(\beta - \alpha)}.
	\]
	
	Now, fix $\omega \in \bigcap_{n=0}^\oo \mcl A_n$ and $s,t \in D$, assume without loss of generality that $s < t$, and let $n \in \NN$ be such that
	\[
		2^{-n-1} < t-s \le 2^{-n}.
	\] 
	Then, for some $M_1,M_2 \in \NN$ and
	\[
		\pars{ s^i}_{i=0}^{M_1}, \pars{ t^j}_{j=0}^{M_2} \subset D,
	\]
	we can write
	\[
		s = s^{M_1} > s^{M_1 - 1} > s^{M_1 - 2} > \cdots > s^1 > s^0 \quad \text{and}
		\quad 
		t = t^{M_2} < t^{M_2 - 1} < t^{M_2 - 2} < \cdots < t^1 < t^0,
	\]
	where
	\[
		\left\{
		\begin{split}
			&s^i \in D_{n+i} \text{ for } i = 0,1,2,\ldots, M_1, \quad t^j \in D_{n+j} \text{ for } j = 0,1,2,\ldots,M_2,\\
			&s^i - s^{i-1} = 2^{-n-i}, \; t^{j-1} - t^j = 2^{-n-j}, \text{ and}\\
			&t_0 - s_0 = 2^{-n}.
		\end{split}
		\right.
	\]
	It follows that, for all $\mu \in \mcl M$,
	\begin{align*}
		|Z_\mu(s) - Z_\mu(t)| &\le |Z_\mu(s^0) - Z_\mu(t^0)| + \sum_{i=1}^{M_1} |Z_\mu(s^i) - Z_\mu(s^{i-1})| + \sum_{j=1}^{M_2} |Z_\mu(y^j) - Z_\mu(y^{j-1})| \\
		&\le (M_\mu+A\lambda)\pars{ 2^{-n\alpha} +  \sum_{i=1}^{M_1} 2^{-(n+i) \alpha} + \sum_{j=1}^{M_2} 2^{-(n+j) \alpha} }\\
		&\le (M_\mu+A\lambda) 2^\alpha \pars{ 1 + \sum_{i=1}^{M_1} 2^{-i\alpha} + \sum_{j=1}^{M_2} 2^{-j\alpha} }|t-s|^\alpha\\
		&\le 3 \cdot 2^\alpha (M_\mu+A\lambda)|t-s|^\alpha.	
	\end{align*}
	Choosing now $A(\alpha) = 3^{-1} 2^{-\alpha}$, we conclude that, if $\omega \in \bigcap_{n=0}^\oo \mcl A_n$, then, for some constant $C_1$ as in the statement of the lemma,
	\[
		\sup_{\mu \in \mcl M}\pars{  \sup_{s,t \in [0,1]} \frac{|Z_\mu(s) - Z_\mu(t)|}{|t-s|^\alpha} - C_1 M_\mu }\le \lambda.
	\]
	Therefore, for some $C_2 = C_2(p,\alpha,\beta,T) > 0$,
	\[
		\mbf P\pars{ \sup_{\mu \in \mcl M} \pars{[Z_\mu]_{\alpha,[0,1]}- C_1 M_\mu} >  \lambda} \le \sum_{n=0}^\oo \mbf P(\Omega \backslash \mcl A_n) \le m A(\alpha)^{-p} \lambda^{-p}\sum_{n=0}^\oo 2^{-np(\beta - \alpha)} \le C_2 m\lambda^{-p}.
	\]
\end{proof}

\begin{proof}[Proof of Lemma \ref{L:fauxIto}] We define the parameter set
	\[
		\mcl M := (0,1) \times \{ (s,t) : 0 \le s \le t \le T \} \times W^{1,\oo}([0,T],\RR^d) \times F_{\sigma,\Lambda},
	\]
	where $F_{\sigma,\Lambda}$ is the subspace of $C^1(\RR^d,\RR^m)$ defined in \eqref{C1subspace}, and, for each $\mu = (\delta,(s,t),\gamma,f) \in \mcl M$ and $u \in [0,T]$, the stochastic process
	\begin{equation}\label{subadditiveguy}
		Z_\mu(u) := \delta^{q} \int_0^u f(\gamma_r) \ind_{[s,t]}(r) \cdot dB_r.
 	\end{equation}
	We first show that there exists a constant $M = M(T,\sigma_0,K,q) > 0$ and, for all $p \ge 1$, a constant $C = C(T,\sigma_0,K,p,q) > 0$ such that
	\begin{equation}\label{pointwisemoment}
		\sup_{0 \le r_1 \le r_2 \le T} \mbf E \left[ \sup_{\mu \in \mcl M} \pars{  \frac{ |Z_\mu(r_2) - Z_\mu(r_1)|}{ (r_2 - r_1)^{1/2} } - M \delta^{q} \pars{ \delta^{q'} \int_s^t |\dot \gamma_r|^{q'} dr + \frac{1}{\delta^{q} } }  }^p_+ \right] \le C \sigma^p.
	\end{equation}
	
	Fix $r_1,r_2 \in [0,T]$ with $r_1 \le r_2$. Then
	\[
		Z_\mu(r_1) - Z_\mu(r_2) = \delta^q\int_{r_1}^{r_2} f(\gamma_r)\ind_{[s,t]}(r) \cdot dB_r = \delta^q\int_{r_1 \vee s}^{r_2 \wedge t} f(\gamma_r) \cdot dB_r.
	\]
	We now split into several cases, depending on the relative sizes and positions of the intervals $[r_1,r_2]$ and $[s,t]$.
	
	{\it Case 1.} Assume first that
\begin{equation}\label{Deltacase1}
	r_2 - r_1 \le \frac{\sigma^q}{\Lambda^q}.
\end{equation}

Integrating by parts, we have
\[
	\int_{r_1}^{r_2} f(\gamma_r)\ind_{[s,t]}(r) \cdot dB_r = f(\gamma_{r_2 \wedge t})\cdot (B_{r_2 \wedge t} - B_{r_1 \vee s}) + \int_{r_1\vee s}^{r_2 \wedge t} Df(\gamma_r) \dot \gamma_r \cdot (B_r - B_{r_1 \vee s})dr.
\]
Set
\[
	X_0 := \max_{u,v \in [r_1,r_2]} \abs{ B_u - B_v}.
\]
Young's inequality and \eqref{Deltacase1} then give, for some constant $C = C(\sigma_0, q) > 0$,
\begin{align*}
	\abs{ \int_{r_1}^{r_2} f(\gamma_r)\ind_{[s,t]}(r) \cdot dB_r} &\le \sigma X_0  + \Lambda X_0 \int_{r_1 \vee s}^{r_2 \wedge t} |\dot \gamma_r|dr\\
	&\le \sigma \pars{ X_0 + X_0\pars{\int_{s}^{t} |\dot \gamma_r|^{q'}dr}^{1/q'}}\\
	&\le \sigma \pars{ X_0+   C\frac{X_0^q }{ \delta^q (r_2 - r_1)^{(q-1)/2}} } + (r_2-r_1)^{1/2} \delta^{q'} \int_{s}^{t} |\dot \gamma_r|^{q'}dr .
\end{align*}
Therefore, 
\[
	\sup_{\mu \in \mcl M} \pars{ |Z_\mu(r_2) - Z_\mu(r_1)| - \delta^{q' + q} \int_{s}^{t} |\dot \gamma_r|^{q'}dr (r_2 - r_1)^{1/2}}
	\le \sigma \pars{ X_0 +  \frac{C X_0^q}{(r_2 - r_1)^{(q-1)/2} } },
\]
and \eqref{pointwisemoment} holds in this case for any $M > 0$, in view of the fact that, for any $m > 0$, there exists a constant $c(m) > 0$ such that
\[
	\mbf E X_0^m \le c (r_2 - r_1)^{m/2}.
\]

{\it Case 2.} Assume now that
\begin{equation}\label{Deltacase2}
	r_2 - r_1 > \frac{\sigma^q}{\Lambda^q}.
\end{equation}
Set
\begin{equation}\label{meshsizeh}
	h := \frac{\sigma}{\Lambda} (r_2 - r_1)^{1/q'},
\end{equation}
and let $N \in \NN$ be such that
\[
	\frac{r_2 - r_1}{h} \le N < \frac{r_2 - r_1}{h} + 1.
\]
Note that \eqref{Deltacase2} implies that $Nh$ is proportional to the size of the interval $[r_1,r_2]$, and, in particular,
\[
	r_2 - r_1 \le Nh < 2(r_2 - r_1).
\]

For $k = 0,1,2,\ldots, N-1$, set $\tau_k := r_1 + kh$ and $\tau_N = r_2$, and, for $k = 1,2,\ldots, N$, define
\[
	X_k = \max_{u,v \in [\tau_{k-1},\tau_k]} \abs{ B_u - B_v}.
\]

We claim that, for all $\eps > 0$, $0 \le s \le t \le T$, $\gamma \in \mcl A$, and $f \in F_{\sigma, \Lambda}$,
\begin{equation}\label{unifst}
	\abs{ \int_{r_1}^{r_2} f(\gamma_r) \ind_{[s,t]}(r) \cdot dB_r } \le \sigma \sum_{k=1}^N X_k + \Lambda h^{1/q} \pars{ \frac{1}{q\eps^q} \sum_{k=1}^N X_k^q +  \frac{\eps^{q'}}{q'} \int_{s}^{t} |\dot \gamma_r|^{q'}dr }.
\end{equation}
The proof of \eqref{unifst} will depend on whether $t-s$ is small or large compared to $r_2 - r_1$.

Choose $m, n \in \NN$ such that
\[
	\tau_{m-1} < s \le \tau_m \quad \text{and} \quad \tau_n \le t < \tau_{n+1},
\]
and, in what follows, define $\tau_{-1} := -\oo$, $\tau_{N+1} := +\oo$, and $X_0 = X_{N+1} = 0$ for consistency. Observe that $n \ge m-1$.

{\it Case 2a.}  If $n = m-1$, that is,
\[
	\tau_{m-1} < s \le t < \tau_m,
\]
then
\[
	\int_{r_1}^{r_2} f(\gamma_r)\ind_{[s,t]}(r) \cdot dB_r = f(\gamma_t)\cdot (B_t - B_s) - \int_s^t Df(\gamma_r) \dot \gamma_r  \cdot (B_r - B_s)dr
\]
and so Young's inequality and the fact that $t-s \le h$ yield
\begin{align*}
	\abs{\int_{r_1}^{r_2} f(\gamma_r)\ind_{[s,t]}(r) \cdot dB_r } &\le \sigma X_m + \Lambda X_m \int_s^t |\dot \gamma_r|dr\\
	&\le \sigma X_m + \Lambda h^{1/q} X_m \pars{ \int_s^t |\dot \gamma_r|^{q'} dr}^{1/q'}\\
	&\le \sigma X_m + \Lambda h^{1/q} \pars{ \frac{1}{q\eps^q} X_m^q + \frac{\eps^{q'}}{q'} \int_s^t |\dot \gamma_r|^{q'}dr }.
\end{align*}
Therefore, \eqref{unifst} holds.

{\it Case 2b.} Assume now that $n > m-1$. Then
\begin{align*}
	\int_{r_1}^{r_2} f(\gamma_r)\ind_{[s,t]}(r) \cdot dB_r
	&= \sum_{k=m}^{n+1} \int_{\tau_{k-1}}^{\tau_k} f(\gamma_r) \ind_{[r_1 \vee s, r_2 \wedge t]}(r) \cdot dB_r \\
	&= \mathrm{I} - \mathrm{II},
\end{align*}
where
\[
	\mathrm{I} := f(\gamma_{\tau_m}) \cdot (B_{\tau_m} - B_{s \vee r_1}) + f(\gamma_{t \wedge r_2}) \cdot (B_{t \wedge r_2} - B_{\tau_n} ) + \sum_{k=m+1}^n f(\gamma_{\tau_k}) \cdot (B_{\tau_k} - B_{\tau_{k-1}} )
\]
and
\begin{align*}
	\mathrm{II} &:= \int_{s \vee r_1}^{\tau_m} Df(\gamma_r)\dot \gamma_r \cdot (B_r - B_{s \vee r_1} )dr
	+ \int_{\tau_n}^{t \wedge r_2} Df(\gamma_r) \dot \gamma_r \cdot (B_r - B_{\tau_n})dr\\
	&+ \sum_{k=m+1}^n \int_{\tau_{k-1}}^{\tau_k} Df(\gamma_r)\dot \gamma_r \cdot (B_r - B_{\tau_{k-1}})dr.
\end{align*}
The inequality \eqref{unifst} is then a consequence of the estimates
\[
	|\mathrm{I} | \le \sigma \sum_{k=m}^{n+1} X_k \le \sigma \sum_{k=1}^N X_k
\]
and
\begin{align*}
	|\mathrm{II}| &\le \Lambda \pars{ X_m \int_{s \vee r_1}^{\tau_m} |\dot \gamma_r|dr + X_{n+1} \int_{\tau_n}^{t \wedge r_2} |\dot \gamma_r|dr + \sum_{k=m+1}^{n} X_k \int_{\tau_{k-1}}^{\tau_k} |\dot \gamma_r|dr}\\
	&\le \Lambda h^{1/q} \pars{ X_m \pars{ \int_{s}^{\tau_m} |\dot \gamma_r|^{q'}dr}^{1/q'}+ X_{n+1} \pars{ \int_{\tau_n}^{t} |\dot \gamma_r|^{q'} dr}^{1/q'} + \sum_{k=m+1}^{n} X_k \pars{\int_{\tau_{k-1}}^{\tau_k} |\dot \gamma_r|^{q'} dr}^{1/q'} }\\
	&\le \Lambda h^{1/q} \pars{ \frac{1}{q\eps^q} \sum_{k=m}^{n+1} X_k^q +  \frac{\eps^{q'}}{q'} \int_{s}^{t} |\dot \gamma_r|^{q'}dr }\\
	&\le \Lambda h^{1/q} \pars{ \frac{1}{q\eps^q} \sum_{k=1}^{N} X_k^q +  \frac{\eps^{q'}}{q'} \int_{s}^{t} |\dot \gamma_r|^{q'}dr }.
\end{align*}

We now set
\[
	\eps := \delta\pars{ N^{1/q} h^{1/2} }^{1/q'},
\]
so that \eqref{unifst} becomes
\[
	\abs{ \int_{r_1}^{r_2} f(\gamma_r)\ind_{[s,t]}(r) \cdot dB_r} \le \sigma \sum_{k=1}^N X_k + \Lambda h^{1/q} \pars{ \frac{ (N^{1/q} h^{1/2})^{1-q} }{q\delta^q} \sum_{k=1}^N X_k^q + \frac{\delta^{q'} N^{1/q} h^{1/2} }{q'} \int_{s}^{t} |\dot \gamma_r|^{q'}dr }.
\]
For $k = 1,2,\ldots,N$, the constants
\[
	a_k := \mbf E X_k \quad \text{and} \quad b_k := \mbf E X_k^q,
\]
satisfy, for some $a > 0$ and $b = b(q) > 0$,
\[
	a_k \le ah^{1/2} \quad \text{and} \quad b_k \le b h^{q/2},
\]
and so, using the definition of $h$ in \eqref{meshsizeh},
\[
	\sigma \sum_{k=1}^N a_k \le \sigma Nh^{1/2} \le 2a\sigma (r_2 - r_1) h^{-1/2} = 2a K^{1/2} (r_2 - r_1)^{\frac{q+1}{2q}}
\]
and
\begin{align*}
	\Lambda h^{1/q} \cdot (N^{1/q} h^{1/2})^{1-q} \sum_{k=1}^N b_k 
	&\le b \Lambda (Nh)^{1/q} h^{1/2}\\
	&\le 2^{1/q} b \Lambda (r_2 - r_1)^{1/2} h^{1/2} \\
	&= 2^{1/q} b K^{1/2} (r_2 - r_1)^{\frac{q+1}{2q}}.
\end{align*}
Similarly,
\[
	\Lambda h^{1/q} N^{1/q} h^{1/2} \le 2^{1/q} \Lambda (r_2 - r_1)^{1/q} h^{1/2} = 2^{1/q} K^{1/2} (r_2 - r_1)^{\frac{q+1}{2q}},
\]
and therefore, for some constant $M = M(T,K,q) > 0$,
\begin{align*}
	\sup_{\mu \in \mcl M} &\pars{ |Z_\mu(r_2) - Z_\mu(r_1)|- M \delta^q \pars{ \delta^{q'} \int_s^t |\dot \gamma_r|^{q'} dr + \frac{1}{\delta^q} }(r_2 - r_1)^{1/2}  }_+ \\
	&\le M \pars{\sigma \abs{ \sum_{k=1}^N \pars{ X_k - a_k} } + \Lambda (r_2 - r_1)^{1/q}  h^{1/2} \cdot \frac{1}{ Nh^{q/2}}\abs{ \sum_{k=1}^N (X_k^q- b_k)}}.
\end{align*}
The collections $(X_k - a_k)_{k=1}^N$ and $(X_k^q  -b_k)_{k=1}^N$ consist of independent, mean-zero random variables that satisfy, for any $k = 1,2,\ldots, N$, $m \ge 1$, and some constants $a' = a'(m) > 0$ and $b' = b'(q,m)$,
\[
	\mbf E \abs{ X_k - a_k}^m \le a' h^{m/2} \quad \text{and} \quad \mbf E \abs{ X_k^q - b_k}^m \le b' h^{mq/2}.
\]
Therefore, for each $p \ge 1$, there exist constants $A = A(p) > 0$ and $B = B(p,q) > 0$ such that
\[
	\mbf E \abs{ \sum_{k=1}^N \pars{ X_k - a_k} }^p \le A N^{p/2} h^{p/2}
\]
and
\[
	\mbf E \abs{ \sum_{k=1}^N (X_k^q- b_k)}^p \le B N^{p/2}  h^{pq/2}.
\]
Combining all terms in the inequality leads to \eqref{pointwisemoment}.

We now take $p$ large enough that
\[
	\alpha < \frac{1}{2} - \frac{1}{p}.
\]
Then \eqref{pointwisemoment} and Lemma \ref{L:Kolmogorov} imply that, for some $C = C(T,\sigma_0,K,p,q,\alpha) > 0$ and $M = M(T,\sigma_0, K, q,\alpha) > 0$, possibly different from above, and for all $\lambda \ge 1$ and
\[
	p > \frac{2}{1-2\alpha},
\]
we have
\[
	\mbf P \pars{ \sup_{\mu \in \mcl M}\pars{ [Z_\mu]_{\alpha,[0,T]} - M\delta^q\pars{ \delta^{q'}  \int_s^t |\dot \gamma_r|^{q'}dr + \frac{1}{\delta^{q}} } } > \lambda} \le \frac{C \sigma^p}{\lambda^p}. 
\]
By changing the value of $C$, if necessary, depending only on $\sigma_0$, the same can be accomplished for any $p \ge 1$. The proof of Lemma \ref{L:fauxIto} is then finished upon defining
\begin{equation}\label{theD}
	\mcl D_{\sigma,\Lambda} :=  \sup_{\mu \in \mcl M}\pars{ [Z_\mu]_{\alpha,[0,T]} - M\delta^q\pars{ \delta^{q'}  \int_s^t |\dot \gamma_r|^{q'}dr + \frac{1}{\delta^{q}}  } }.
\end{equation}
\end{proof}

We now use Lemma \ref{L:fauxIto} to estimate the growth of the Lagrangian, and the $W^{1,q'}$-norms for almost-minimizers of $L_f$, in terms of $\mcl D_{\sigma,\Lambda}$. Note that the exponent $q'$ in Lemma \ref{L:fauxIto} is chosen so as to match the growth of $H^*$ in the definition of $L_f$.

\begin{lemma}\label{L:minimizer}
	Assume that $H$ satisfies \eqref{A:Hestimates}. Then there exists $C = C(T,\sigma_0, K, q,\alpha) > 0$ such that, if $\sigma \in (0,\sigma_0)$, $\sigma \Lambda = K$, $f \in F_{\sigma,\Lambda}$, and $(x,y,s,t) \in \RR^d \times \RR^d \times [0,T] \times [0,T]$ with $s < t$, then
	\begin{equation}\label{randomLbounds}
		-C(1+\mcl D_{\sigma,\Lambda})(t-s)^{\alpha} + \frac{1}{C} \frac{|y-x|^{q'}}{(t-s)^{q'-1}} \le L_f(x,y,s,t) \le C(1+\mcl D_{\sigma,\Lambda})(t-s)^{\alpha} + C \frac{|y-x|^{q'}}{(t-s)^{q'-1}},
	\end{equation}
	and, if $\gamma: [s,t] \times \Omega \to \RR^d$ is such that $\gamma \in \mcl A(x,y,s,t)$ and
	\begin{equation}\label{almostminimizer}
		L_f(x,y,s,t) + 1 \ge \int_s^t H^*(\dot \gamma_r)dr + \int_s^t f(\gamma_r) \cdot dB_r \quad \text{with probability one,}
	\end{equation}
	then
	\begin{equation}\label{minLq}
		\int_s^t |\dot \gamma_r|^{q'}dr \le C \pars{ \frac{|y-x|^{q'}}{(t-s)^{q'-1} } + \mcl D_{\sigma,\Lambda} + 1 }.
	\end{equation}
\end{lemma}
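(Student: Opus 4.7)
I would prove both assertions as consequences of Lemma \ref{L:fauxIto} applied to the stochastic part of the action, combined with the two-sided growth bounds \eqref{Hstarbounds} on $H^*$, working on the interval $[s,t] \subset [0,T]$. A preliminary observation used throughout is that, because $s,t \in [0,T]$ and $\alpha < 1$, one has $(t-s)^\alpha \le T^\alpha$ and $(t-s) \le T^{1-\alpha}(t-s)^\alpha$, so any term of order $(t-s)$ can be absorbed into a term of order $(t-s)^\alpha$ with an adjusted constant. For the upper bound in \eqref{randomLbounds}, I would test the infimum in \eqref{Lf} against the straight-line path $\gamma_r := x + \frac{r-s}{t-s}(y-x)$, for which $\int_s^t |\dot\gamma_r|^{q'}\,dr = |y-x|^{q'}/(t-s)^{q'-1}$. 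The upper bound in \eqref{Hstarbounds} then yields $\int_s^t H^*(\dot\gamma_r)\,dr \le C|y-x|^{q'}/(t-s)^{q'-1} + C(t-s)$, while Lemma \ref{L:fauxIto} applied with a fixed $\delta \in (0,1)$, say $\delta = 1/2$, and $(r_1,r_2) = (s,t)$ controls the stochastic integral by
\[
\abs{\int_s^t f(\gamma_r)\cdot dB_r} \le \pars{C \frac{|y-x|^{q'}}{(t-s)^{q'-1}} + C(M + \mcl D_{\sigma,\Lambda})}(t-s)^\alpha.
\]
Summing these estimates and absorbing $(t-s)$ into $(t-s)^\alpha$ produces the claimed upper bound.

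For the lower bound in \eqref{randomLbounds}, I would fix an arbitrary $\gamma \in \mcl A(x,y,s,t)$ and use $H^*(v) \ge \frac{1}{C}|v|^{q'} - C$ together with the $\delta$-dependent form of Lemma \ref{L:fauxIto} to write, for any $\delta \in (0,1)$,
\[
\int_s^t H^*(\dot\gamma_r)\,dr + \int_s^t f(\gamma_r)\cdot dB_r \ge \pars{\frac{1}{C} - M\delta^{q'}(t-s)^\alpha} \int_s^t |\dot\gamma_r|^{q'}\,dr - C(t-s) - \frac{(M+\mcl D_{\sigma,\Lambda})(t-s)^\alpha}{\delta^q}.
\]
Choosing $\delta = \delta_0$ small, depending only on $C,M,T,q,\alpha$, so that $M\delta_0^{q'} T^\alpha \le \frac{1}{2C}$, the coefficient of $\int_s^t |\dot\gamma_r|^{q'}\,dr$ becomes at least $\frac{1}{2C}$. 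Applying Hölder's inequality in the form $|y-x|^{q'} \le (t-s)^{q'-1}\int_s^t |\dot\gamma_r|^{q'}\,dr$, taking the infimum over $\gamma \in \mcl A(x,y,s,t)$, and absorbing $(t-s)$ into $(t-s)^\alpha$ then gives the lower bound.

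For the almost-minimizer estimate \eqref{minLq}, I would apply the same lower bound to an almost-minimizer $\gamma$ satisfying \eqref{almostminimizer}, but \emph{without} invoking Hölder at the last step. Combined with \eqref{almostminimizer} and the upper bound on $L_f$ just established, this yields
\[
\frac{1}{2C} \int_s^t |\dot\gamma_r|^{q'}\,dr \le L_f(x,y,s,t) + 1 + C(t-s) + \frac{(M+\mcl D_{\sigma,\Lambda})(t-s)^\alpha}{\delta_0^q} \le C'\pars{\frac{|y-x|^{q'}}{(t-s)^{q'-1}} + \mcl D_{\sigma,\Lambda} + 1},
\]
which is \eqref{minLq} after adjusting constants. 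The main delicate point, and the only nontrivial aspect of the argument, is the choice of $\delta_0$: since Lemma \ref{L:fauxIto} restricts $\delta$ to $(0,1)$, the absorption $M\delta_0^{q'}(t-s)^\alpha \le \frac{1}{2C}$ is available only because $(t-s)^\alpha$ is a priori bounded by $T^\alpha$, which is where the restriction $s,t \in [0,T]$ is essential. All constants remain uniform in $(\sigma,\Lambda)$ since they depend only on $K = \sigma\Lambda$ and $\sigma_0$, matching the dependence already established in Lemma \ref{L:fauxIto}.
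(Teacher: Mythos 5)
Your proposal is correct and follows essentially the same route as the paper's proof: test the infimum against the straight-line path and apply Lemma~\ref{L:fauxIto} with a fixed $\delta$ for the upper bound, combine the coercivity of $H^*$ with Lemma~\ref{L:fauxIto} and a small $\delta_0$ (absorbing $(t-s)^\alpha \le T^\alpha$) followed by Jensen/H\"older for the lower bound, and re-use the same $\delta_0$-estimate without Jensen on an almost-minimizer together with the upper bound on $L_f$ to obtain \eqref{minLq}. Your treatment is marginally more careful in two minor points (using $\delta=1/2$ rather than the paper's stated $\delta=1$, which strictly speaking falls outside the admissible range $(0,1)$ in Lemma~\ref{L:fauxIto}, and tracking the $(t-s)^\alpha$ factor explicitly in the choice of $\delta_0$), but these do not constitute a different argument.
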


\begin{proof}
We apply the Lagrangian action to the linear path
\[
	\ell := \pars{ x + \frac{y-x}{t-s}(r-s)}_{r \in [s,t]} \in \mcl A(x,y,s,t),
\]
and, appealing to Lemma \ref{L:fauxIto} with $\delta = 1$, we find
\[
	L_f(x,y,s,t) \le \int_s^t H^*(\dot \ell_r)dr + \int_s^t f(\ell_r) \cdot dB_r \le C \pars{ \frac{|y-x|^{q'}}{(t-s)^{q'-1}} + (1+\mcl D_{\sigma,\Lambda})(t-s)^\alpha}.
\]
This establishes the upper bound of \eqref{randomLbounds}.

Next, \eqref{Hstarbounds} and Lemma \ref{L:fauxIto} yield, for any $\delta \in (0,1)$ and $\gamma \in \mcl A(x,y,s,t)$,
\begin{equation}\label{firstuseofdelta}
	\int_s^t H^*(\dot \gamma_r)dr + \int_s^t f(\gamma_r) \cdot dB_r \ge \pars{ \frac{1}{C} - M\delta^{q'}} \int_s^t |\dot \gamma_r|^{q'}dr - \frac{1}{\delta^q} (M + \mcl D_{\sigma,\Lambda} )(t-s)^\alpha.
\end{equation}
Taking $\delta$ sufficiently small, employing Jensen's inequality, and taking the infimum over $\gamma \in \mcl A(x,y,s,t)$ gives the lower bound in \eqref{randomLbounds}. Finally, \eqref{minLq} is a consequence of \eqref{almostminimizer}, \eqref{firstuseofdelta}, and the upper bound in \eqref{randomLbounds}.
\end{proof}

\subsection{The regularity estimate}
For $R > 0$ and $0 < \tau < T$, define the domain
\[
	U_{\tau,R,T} := \left\{ (x,y,s,t) \in \RR^d \times \RR^d \times [0,T] \times [0,T] : |x-y| \le R \text{ and } \tau < t - s < T \right\}.
\]

\begin{proposition}\label{P:Lregularity}
	Assume $H$ satisfies \eqref{A:Hestimates}, and let $\sigma_0 > 0$, $K > 0$, and $0 < \theta < \frac{q}{2+q}$. Then there exist $M = M(R,T,\tau,\sigma_0, K,\theta,q) > 0$ and, for any $p \ge 1$, $C = C(T,\sigma_0, K,\theta,p,q) > 0$ such that, if $\sigma \in (0,\sigma_0)$ and $\sigma\Lambda = K$, then
	\[
		\mbf P \pars{\sup_{f \in F_{\sigma,\Lambda} } [L_f]_{C_{x,y}^{\theta}C_{s,t}^{\theta/q}(U_{\tau,R,T}) } > M + \lambda} \le \frac{CM^p\sigma^p}{\lambda^p} \quad \text{for all } \lambda \ge 1.
	\]
\end{proposition}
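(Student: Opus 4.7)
The plan is to establish Holder regularity of $L_f$ separately in the time variables $(s, t)$ and in the spatial variables $(x, y)$, via competitor-path constructions combined with Lemmas \ref{L:fauxIto} and \ref{L:minimizer}, and then extract the probability tail bound from the moment estimate on $\mcl D_{\sigma,\Lambda}$.

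For the temporal regularity, given $t_1 < t_2$ with $s$ fixed, I extend a near-optimal path $\gamma^*$ for $L_f(x, y, s, t_1)$ by the constant path at $y$ on $[t_1, t_2]$ to obtain a competitor for $L_f(x, y, s, t_2)$. The additional contribution is $H^*(0)(t_2 - t_1) + \int_{t_1}^{t_2} f(y) \cdot dB_r$, and Lemma \ref{L:fauxIto} applied to the constant path $\gamma \equiv y$ (for which $\int |\dot \gamma|^{q'} = 0$) bounds the stochastic term by $(M + \mcl D_{\sigma,\Lambda})(t_2 - t_1)^\alpha$. The converse inequality follows by extending competitors for $L_f(x, y, s, t_2)$ backwards. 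This yields an $\alpha$-Holder bound in $t$, and an analogous backward-extension argument handles $s$. Since $\theta/q < 1/(q+2) < 1/2$, it suffices to take $\alpha \in (0, 1/2)$ close enough to $1/2$.

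For the spatial regularity I treat the variable $x$, the case of $y$ being symmetric. Let $\gamma^*$ be a near-optimal path for $L_f(x_1, y, s, t)$ and define the competitor $\tilde\gamma := \gamma^* + \xi$ for $L_f(x_2, y, s, t)$, where
\[
\xi(r) := \frac{(s + \delta - r)_+}{\delta}(x_2 - x_1) \ind_{[s, s+\delta]}(r)
\]
is a piecewise-linear perturbation supported on $[s, s + \delta]$ that shifts $\gamma^*$ from $x_1$ to $x_2$ at the initial endpoint and merges back at time $s + \delta$; here $\delta \in (0, \tau/2)$ is a free parameter to be optimized. The action difference $A_{\tilde\gamma} - A_{\gamma^*}$ localizes on $[s, s + \delta]$. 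Its deterministic part is estimated using \eqref{DHstar}, Holder's inequality in the form $\int_s^{s+\delta}|\dot\gamma^*|^{q'-1} dr \le V^{* \, q'-1}\delta^{1/q'}$, and the almost-minimizer bound $V^* := \pars{\int_s^t|\dot\gamma^*|^{q'} dr}^{1/q'} \le C(1+\mcl D_{\sigma,\Lambda})^{1/q'}$ from Lemma \ref{L:minimizer}, giving a contribution of order $C|x_1 - x_2|(1 + \mcl D_{\sigma,\Lambda})^{1/q}\delta^{-1/q} + C|x_1 - x_2|^{q'}/\delta^{q'-1} + C|x_1 - x_2|$. The stochastic part is handled by applying Lemma \ref{L:fauxIto} separately to $\int_s^{s+\delta} f(\tilde\gamma) \cdot dB$ and $\int_s^{s+\delta} f(\gamma^*) \cdot dB$ with auxiliary parameter set to one, and noting that $\int_s^{s+\delta}|\dot{\tilde\gamma}|^{q'} dr \le C(V^{* \, q'} + |x_1-x_2|^{q'}/\delta^{q'-1})$, yielding a contribution of order $C(1 + \mcl D_{\sigma,\Lambda})\delta^\alpha + C|x_1 - x_2|^{q'}\delta^\alpha/\delta^{q'-1}$.

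The critical balance is between the dominant deterministic term $C|x_1 - x_2|(1 + \mcl D)^{1/q}\delta^{-1/q}$ and the dominant stochastic term $C(1 + \mcl D)\delta^\alpha$. Setting $\delta = |x_1 - x_2|^{q/(q\alpha + 1)}$ equates the two, both becoming of order $C(1 + \mcl D_{\sigma,\Lambda})|x_1 - x_2|^{q\alpha/(q\alpha + 1)}$, and a short computation confirms that the residual terms $C|x_1-x_2|$ and $C|x_1-x_2|^{q'}/\delta^{q'-1}$ are of strictly higher power of $|x_1-x_2|$ at this balance point. Since $\alpha \in (0, 1/2)$ is arbitrary, the exponent $q\alpha/(q\alpha+1)$ can be made to exceed any prescribed $\theta < q/(q+2)$, which is the origin of the restriction on $\theta$. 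Combining the temporal and spatial estimates by the triangle inequality, and using that $\mcl D_{\sigma,\Lambda}$ is defined uniformly over $f \in F_{\sigma,\Lambda}$, I obtain
\[
\sup_{f \in F_{\sigma,\Lambda}} [L_f]_{C^\theta_{x,y} C^{\theta/q}_{s,t}(U_{\tau, R, T})} \le M_0(1 + \mcl D_{\sigma,\Lambda})
\]
for a deterministic $M_0 = M_0(R, T, \tau, \sigma_0, K, \theta, q)$. The advertised tail bound then follows by adjusting constants and applying $\mbf P(\mcl D_{\sigma,\Lambda} > \lambda) \le C \sigma^p / \lambda^p$ from Lemma \ref{L:fauxIto}(a). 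The main technical obstacle is the spatial estimate: since $\gamma^*$ is not adapted to the Brownian filtration, It\^o calculus does not apply to $\int f(\gamma^*) \cdot dB$, and Lemma \ref{L:fauxIto} must serve as a pathwise substitute; the resulting interplay between the Lipschitz-type deterministic cost and the $1/2$-Holder-like Brownian contribution is precisely what fixes the critical Holder exponent $q/(q + 2)$.
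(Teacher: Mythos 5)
Your spatial argument is essentially the paper's: a piecewise-linear perturbation of width $\delta$ near one endpoint, with deterministic cost estimated via \eqref{DHstar} and Lemma \ref{L:minimizer}, stochastic cost via Lemma \ref{L:fauxIto}, and the balance $\delta \sim |x_1-x_2|^{q/(q\alpha+1)}$ producing the exponent $q\alpha/(q\alpha+1)$. Your computation there is correct, including the check that the residual terms are subdominant. (The paper happens to place the perturbation near the terminal endpoint when varying $y$; placing it near $s$ when varying $x$ is a harmless time-reversal.)

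The gap is in the temporal estimate, and it is not a small one. The inequality you actually prove is one-sided: from sub-additivity and the constant path,
\[
L_f(x,y,s,t_2) - L_f(x,y,s,t_1) \le L_f(y,y,t_1,t_2) \le C\bigl(1 + \sigma[B]_{\alpha,[0,T]}\bigr)(t_2-t_1)^\alpha,
\]
which does give an $\alpha$-H\"older bound on the \emph{increase} of $L_f$ as $t$ grows. But the H\"older semi-norm requires the two-sided bound, and you dismiss the other direction with ``the converse inequality follows by extending competitors for $L_f(x,y,s,t_2)$ backwards.'' There is no such operation: a competitor for the larger interval $[s,t_2]$ cannot be \emph{extended} to produce one for the smaller interval $[s,t_1]$ — the path must be \emph{compressed}. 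Concretely, to bound $L_f(x,y,s,t_1) - L_f(x,y,s,t_2)$ from above one takes a near-minimizer $\gamma \in \mcl A(x,y,s,t_2)$, follows it up to time $t_1 - h$, and then interpolates linearly to $y$ on $[t_1 - h, t_1]$. The displacement $|y - \gamma_{t_1}|$ is only controlled by $\bigl(\int_s^{t_2}|\dot\gamma|^{q'}\bigr)^{1/q'}(t_2-t_1)^{1/q}$, so the interpolation speed is of order $(t_2-t_1)^{1/q}/h$, and the superlinear growth of $H^*$ makes this speed-up costly. Optimizing $h$ against the stochastic term (the analogue of your spatial balance) produces the exponent $\alpha/(1+\alpha q) = \theta/q$, not $\alpha$ — this is precisely why the temporal exponent in the proposition is the worse $\theta/q$ rather than $\theta$. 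Your claim of $\alpha$-H\"older regularity in $t$ (and the ``analogous'' claim in $s$, which has the same flaw) is not justified by your construction, and as stated the step simply fails. To repair the proof you must add the path-compression argument in both time variables; once that is done the rest of your reasoning, including the final passage from $\sup_f[L_f] \le M(1+\mcl D_{\sigma,\Lambda})$ to the tail bound via Lemma \ref{L:fauxIto}(a), is sound.
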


\begin{proof}
	We choose $\alpha \in (0,1/2)$ from Lemma \ref{L:fauxIto} sufficiently close to $1/2$ that
	\begin{equation}\label{thetaalpha}
		\theta = \frac{\alpha q}{1 + \alpha q}.
	\end{equation}
	In what follows, the constants $C > 0$ and $M > 0$ depend on $R$, $T$, $\tau$, $\sigma_0$, $K$, $q$, and $\theta$ (and therefore $\alpha$) unless otherwise specified, and may change from line to line.
	
	{\it Step 1: Spatial increments.} Fix $(x,y,s,t), (x,\tilde y, s,t) \in U_{R,T,\tau}$ and $\nu \in (0,1)$, and let $\gamma \in \mcl A(x,y,s,t)$ satisfy
	\begin{equation}\label{numin}
		L_f(x,y,s,t) + \nu \ge \int_s^t H^*(\dot \gamma_r)dr + \int_s^t f(\gamma_r) \cdot dB_r.
	\end{equation}
	Set
	\[
		h := c (t-s)|\tilde y -y|^{q/(1+\alpha q)},
	\] 
	where $c = c(q,\alpha,R)$ is chosen so that $s < t - h < t$, and define $\tilde \gamma \in \mcl A(x,\tilde y,s,t)$ by
	\[
		\tilde \gamma_r := 
		\begin{dcases}
			\gamma_r & \text{for } r \in [s,t - h), \text{ and}\\
			\gamma_r + \frac{\tilde y - y}{h}(r - t + h) & \text{for } r \in [t - h,t].
		\end{dcases}
	\]
	Observe that
	\begin{equation}\label{comparativepaths}
		\nor{\dot{\tilde \gamma}}{q'} \le \nor{\dot\gamma}{q'} + \pars{ \int_{t - h}^t \abs{ \frac{\tilde y- y}{h} }^{q'}dr}^{1/q'} = \nor{\dot\gamma}{q'} + \frac{|\tilde y- y|}{h^{1/q}} \le \nor{\dot \gamma}{q'} +  C |\tilde y- y|^{\alpha q/(1+ \alpha q)} \le \nor{\dot \gamma}{q'} + C,
	\end{equation}
	and so, by Lemma \ref{L:minimizer},
	\[
		\int_s^t \pars{ |\dot \gamma_r|^{q'} + |\dot{\tilde \gamma}_r |^{q'} } dr \le C( 1 + \mcl D_{\sigma,\Lambda}).
	\]
	
	The definition of $L_f$ then yields
	\[
		L_f(x,\tilde y,s,t) - L_f(x,y,s,t) - \nu \le \int_{t-h}^t \left[ H^*\pars{ \dot \gamma_r + \frac{\tilde y - y}{h}}  - H^*\pars{ \dot \gamma_r} \right]dr + \int_{t - h}^t \pars{ f(\tilde \gamma_r) - f(\gamma_r)} \cdot dB_r.
	\]
	To bound the first integral, we use \eqref{DHstar} and \eqref{comparativepaths} to obtain
	\begin{align*}
		\int_{t-h}^t \left[ H^*\pars{ \dot \gamma_r + \frac{\tilde y - y}{h}}  - H^*\pars{ \dot \gamma_r} \right]dr &\le
		C \int_{t-h}^t \pars{ 1 + |\dot \gamma_r|^{q'-1} + |\dot{\tilde \gamma}_r|^{q'-1} }dr \frac{|\tilde y - y|}{h}\\
		&\le C\pars{ |\tilde y -y|  + \frac{|\tilde y- y|}{h}\int_{t - h}^t \pars{  |\dot \gamma_r|^{q'-1} +  |\dot {\tilde \gamma}_r|^{q'-1} }dr }\\
		&\le C\pars{ |\tilde y -y| +  |\tilde y - y|^{\alpha q/(1+ \alpha q)} \pars{ 1 + \mcl D_{\sigma,\Lambda}}^{1/q} }. 
	\end{align*}
	Applying Lemma \ref{L:fauxIto} in conjunction with Lemma \ref{L:minimizer} gives
	\[
		\abs{ \int_{t - h}^t f(\gamma_r) \cdot dB_r} \le C \pars{ \int_s^t |\dot \gamma_r|^{q'}dr + 1 + \mcl D_{\sigma,\Lambda}}h^\alpha \le C(1 + \mcl D_{\sigma,\Lambda}) h^\alpha,
	\]
	and, because of \eqref{comparativepaths}, the same estimate holds with $\tilde \gamma$ in place of $\gamma$.
		
	Note that $h^\alpha \le C |\tilde y - y|^{\alpha q/(1+ \alpha q)}$. Then, by sending $\nu \to 0$ and exchanging the roles of $\tilde y$ and $y$, we conclude that, for some $M > 0$,
	\[
		\abs{ L_f(x,\tilde y,s,t) - L_f(x,y,s,t)} \le M(1 + \mcl D_{\sigma,\Lambda} ) |\tilde y- y|^{\alpha q/(1+ \alpha q)}.
	\]
	Similar arguments give, for all $(x,y,s,t), (\tilde x, y,s,t) \in U_{R,T,\tau}$,
	\[
		\abs{ L_f(\tilde x,y,s,t) - L_f(x,y,s,t)} \le M (1 + \mcl D_{\sigma,\Lambda} ) |\tilde x- x|^{\alpha q/(1+ \alpha q)}.
	\]
	
	{\it Step 2: time increments.} Fix $(x,y,s,t), (x,y,s,\tilde t) \in U_{R,T,\tau}$ with $\tilde t < t$. Then the sub-additivity of $L_f$ (see \eqref{Lsubadditive}) yields
	\[
		L_f(x,y,s, t) - L_f(x,y,s, \tilde t) \le L_f(y,y,\tilde t, t).
	\]
	Bounding $L_f(y,y,\tilde t, t)$ from above by applying the Lagrangian action to the constant path $\gamma \equiv y$ gives, for some $C = C(R,q) > 0$,
	\begin{equation}\label{Ltimeupper}
		L_f(x,y,s, t) - L_f(x,y,s, \tilde t) \le C(t - \tilde t) + f(y) \cdot (B(t) - B(\tilde t))\le C( 1 + \sigma [B]_{\alpha,[0,T]} ) (t - \tilde t)^\alpha.
	\end{equation}
	
	We next find a lower bound for $L_f(x,y,s, t) - L_f(x,y,s, \tilde t)$. For $\nu \in (0,1)$, let $\gamma \in \mcl A(x,y,s,t)$ once more satisfy \eqref{numin}, set
	\[
		h := c(t-s) ( t - \tilde t)^{1/(1+\alpha q)},
	\]
	where $c = c(\tau) >0$ is chosen so that $s < t - h < t$, and define $\tilde \gamma \in \mcl A(x,y,s, \tilde t)$ by
	\[
		\tilde \gamma_r :=
		\begin{dcases}
			\gamma_r & \text{for } r \in [s, \tilde t - h), \text{ and} \\
			\gamma_r + \frac{y - \gamma_{\tilde t}}{h}(r - \tilde t + h) & \text{for } r \in [\tilde t - h,\tilde t].
		\end{dcases}
	\]
	Then
	\begin{equation} \label{Ltimereg}
		\begin{split}
		L_f(x,y,s,\tilde t) - L_f(x,y,s, t) &\le \int_{ \tilde t -h}^{\tilde t} \left[ H^*\pars{ \dot \gamma_r + \frac{y -  \gamma_{\tilde t}}{ h}} - H^*\pars{ \dot \gamma_r} \right]dr - \int_{\tilde t}^t H^*(\dot \gamma_r)dr \\
		&+ \int_{\tilde t - h}^{\tilde t} \pars{ f(\tilde \gamma_r) - f(\gamma_r)} \cdot dB_r - \int_{\tilde t}^t f(\gamma_r) \cdot dB_r.
		\end{split}
	\end{equation}
	Observe that
	\[
		|y - \gamma_{\tilde t}| \le \int_{\tilde t}^t \abs{ \dot \gamma_r}dr \le \pars{ \int_s^t \abs{ \dot \gamma_r}^{q'} dr}^{1/q'} ( t - \tilde t)^{1/q},
	\]
	which implies that
	\begin{align*}
		\nor{\dot{\tilde \gamma}}{L^{q'}[s,\tilde t]} &\le \nor{\dot \gamma}{L^{q'}[s,t]} + \pars{ \int_{\tilde t - h}^{\tilde t} \abs{ \frac{y - \gamma_{\tilde t}}{h} }^{q'}dr}^{1/q'} \\
		&= \nor{\dot \gamma}{L^{q'}[s,t]} + \frac{|y - \gamma_{\tilde t} |}{h^{1/q}}\\
		&\le \nor{\dot \gamma}{L^{q'}[s,t]} + C \nor{\dot \gamma}{L^{q'}[s,t]} (t - \tilde t)^{\alpha/(1+\alpha q)}\\
		&\le C \nor{\dot \gamma}{L^{q'}[s,t]},
	\end{align*}
	and, therefore, in view of Lemma \ref{L:minimizer},
	\[
		\int_s^{\tilde t} \pars{ |\dot \gamma_r|^{q'} + |\dot{\tilde \gamma}_r|^{q'} }dr \le C(1 + \mcl D_{\sigma,\Lambda}).
	\]
	The first two integrals on the right-hand side of \eqref{Ltimereg} can then be bounded using \eqref{Hstarbounds} and \eqref{DHstar} to obtain
	\begin{align*}
	\int_{\tilde t -h}^{\tilde t} &\left[ H^*\pars{ \dot \gamma_r + \frac{y -  \gamma_{\tilde t}}{h}} - H^*\pars{ \dot \gamma_r} \right]dr - \int_{\tilde t}^t H^*(\dot \gamma_r)dr\\
		&\le C(t - \tilde t) + C\int_{\tilde t - h}^{\tilde t} \pars{ 1 + |\dot{\tilde \gamma}_r|^{q'-1} + | \dot \gamma_r|^{q'-1} }dr \frac{|y - \gamma_{\tilde t}|}{h}\\
		&\le C(t - \tilde t) + C h + C \int_{\tilde t - h}^{\tilde t} \pars{ |\dot{\tilde \gamma}_r|^{q'-1} + | \dot \gamma_r|^{q'-1} }dr\pars{ \int_s^t \abs{ \dot \gamma_r}^{q'} dr}^{1/q'} \frac{ ( t - \tilde t)^{1/q} }{h} \\
		&\le C(t - \tilde t) + C(t - \tilde t)^{\alpha q/(1+\alpha q)} + C(1 + \mcl D_{\sigma,\Lambda}) (t- \tilde t)^{\alpha/(1+\alpha q)} \\
		&\le C(1 + \mcl D_{\sigma,\Lambda}) (t- \tilde t)^{\alpha/(1+\alpha q)}.
	\end{align*}
	Additionally, Lemma \ref{L:fauxIto} gives
	\[
		\abs{ \int_{\tilde t - h}^{\tilde t} f(\gamma_r) \cdot dB_r }  + \abs{ \int_{\tilde t - h}^{\tilde t} f(\tilde \gamma_r) \cdot dB_r } \le C(1 + \mcl D_{\sigma,\Lambda}) h^{\alpha}
	\]
	and
	\[
		\abs{ \int_{\tilde t}^t f(\gamma_r) \cdot dB_r } \le C(1 + \mcl D_{\sigma,\Lambda})(t- \tilde t)^\alpha.
	\]
	Sending $\nu \to 0$, combining all of the lower bounds with the upper bound \eqref{Ltimeupper}, and applying a similar argument in the $s$ variable, we conclude that, for some $M > 0$,
	\begin{align*}
		\abs{ L(x,y,\tilde s, \tilde t) - L(x,y,s,t) } \le M(1 + \mcl D_{\sigma,\Lambda} + \sigma [B]_{\alpha,[0,T]} )\pars{ |s - \tilde s|^{\alpha/(1 + \alpha q)} + |t - \tilde t|^{\alpha/(1 + \alpha q) } }.
	\end{align*}
	
	{\it Step 3.} Combining all of the estimates obtained in Steps 1 and 2 gives
	\[
		\sup_{f \in  F_{\sigma,\Lambda}} [L_f]_{C_{x,y}^{\theta} C_{s,t}^{\theta/q}(U_{R,T,\tau}) } - M \le M(\mcl D_{\sigma,\Lambda} + \sigma [B]_{\alpha,[0,T]}).
	\]
	The result now follows as a consequence of Lemma \ref{L:fauxIto} and the fact that $[B]_{\alpha,[0,T]} \in L^p(\Omega,\mbf P)$ for all $p \ge 1$.
	
\end{proof}

Proposition \ref{P:Lregularity} can be used to obtain regularity estimates for solutions $u$ of
\begin{equation}\label{E:regularity}
	u_t + H(Du) = \sum_{i=1}^m f^i(x) \dot B^i(t,\omega) \quad \text{in } \RR^d \times (0,\oo) \times \Omega \quad \text{and} \quad u(\cdot,0,\cdot) = u_0 \quad \text{in } \RR^d \times \Omega.
\end{equation}
Although we do not directly use the following result in the later parts of the paper, its statement is of independent interest.

\begin{theorem}\label{T:regularity}
	Assume that $H$ satisfies \eqref{A:Hestimates}, $0 < \theta < \frac{q}{2+q}$, and $\mcl R > 1$. Then there exists a constant $C_1 = C_1(\mcl R,q,\theta) > 0$, and for $p \ge 1$, a constant $C_2 = C_2(\mcl R,p,q,\theta) > 0$ such that, whenever $B: [0,\oo) \times \Omega \to \RR^m$ is a standard Brownian motion; $f \in C^1_b(\RR^d;\RR^m)$ and $u_0 \in BUC(\RR^d)$ satisfy
	\[
		\nor{f}{\oo}\cdot \nor{Df}{\oo} + \nor{f}{\oo} + \nor{u_0}{\oo} \le \mcl R;
	\]
	and $u$ is the solution of \eqref{E:regularity}, then, for all $\lambda \ge 1$,
	\[
		\mbf P \pars{ \sup_{(x,s),(y,t) \in B_{\mcl R} \times [1/\mcl R,\mcl R]} \frac{ \abs{ u(x,s) - u(y,t)}}{|x-y|^\theta + |s-t|^{\theta/q}} > C_1 + \lambda } \le \frac{C_2 \nor{f}{\oo}^{p}}{\lambda^{p}}.
	\]
\end{theorem}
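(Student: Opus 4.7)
The strategy combines the Lax--Oleinik representation of $u$ with the Lagrangian regularity estimate of Proposition \ref{P:Lregularity}, using Lemma \ref{L:minimizer} to localize the relevant minimizers to a compact set on a high-probability event. Write $\sigma := \nor{f}{\oo}$ and $\Lambda := \nor{Df}{\oo}$, so that $\sigma \le \mcl R$ and $\sigma \Lambda \le \mcl R$. The representation formula \eqref{pathwiseformula} gives
\[
	u(x,t) = \inf_{y \in \RR^d} \pars{u_0(y) + L_f(y,x,0,t)}.
\]
Since an infimum is a $1$-Lipschitz operation, for any $(x,s),(\tilde x,\tilde t) \in B_{\mcl R} \times [1/\mcl R, \mcl R]$ and any near-minimizer $y^\ast$ of $u(\tilde x, \tilde t)$,
\[
	u(x,s) - u(\tilde x, \tilde t) \le L_f(y^\ast, x, 0, s) - L_f(y^\ast, \tilde x, 0, \tilde t)
\]
up to a vanishing error, with the symmetric inequality obtained by reversing roles.

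The next step is to localize $y^\ast$. Applying Lemma \ref{L:minimizer} to the constant path $\gamma \equiv \tilde x$ gives $L_f(\tilde x, \tilde x, 0, \tilde t) \le C(1+\mcl D_{\sigma,\Lambda}) \tilde t^\alpha$, while the lower bound of the same lemma reads
\[
	L_f(y^\ast, \tilde x, 0, \tilde t) \ge \frac{1}{C}\frac{|y^\ast - \tilde x|^{q'}}{\tilde t^{q'-1}} - C(1+\mcl D_{\sigma,\Lambda})\tilde t^\alpha.
\]
Combining these with the two-sided bound
\[
	u_0(y^\ast) + L_f(y^\ast, \tilde x, 0, \tilde t) - \nu \le u(\tilde x, \tilde t) \le u_0(\tilde x) + L_f(\tilde x, \tilde x, 0, \tilde t),
\]
and using $\nor{u_0}{\oo} \le \mcl R$ and $\tilde t \in [1/\mcl R, \mcl R]$, yields $|y^\ast - \tilde x| \le C(\mcl R)(1 + \mcl D_{\sigma,\Lambda}^{1/q'})$. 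Hence, setting $R := 2\mcl R + C(\mcl R)(1 + \mcl D_{\sigma,\Lambda}^{1/q'})$, both $(y^\ast, x, 0, s)$ and $(y^\ast, \tilde x, 0, \tilde t)$ lie in $U_{1/\mcl R, R, \mcl R}$, and so
\[
	|u(x,s) - u(\tilde x, \tilde t)| \le [L_f]_{C^\theta_{x,y} C^{\theta/q}_{s,t}(U_{1/\mcl R, R, \mcl R})} \pars{|x-\tilde x|^\theta + |s-\tilde t|^{\theta/q}}.
\]

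Let $\mathcal H$ denote the left-hand side of the desired tail bound. To convert this pointwise estimate into a probabilistic one, I condition on the size of $\mcl D_{\sigma,\Lambda}$: for a deterministic threshold $\mu = \mu(\lambda)$ to be tuned,
\[
	\mbf P(\mathcal H > C_1 + \lambda) \le \mbf P(\mcl D_{\sigma,\Lambda} > \mu) + \mbf P\pars{[L_f]_{C^\theta_{x,y} C^{\theta/q}_{s,t}(U_{1/\mcl R, R(\mu), \mcl R})} > C_1 + \lambda},
\]
where now $R(\mu) := 2\mcl R + C(\mcl R)(1 + \mu^{1/q'})$ is deterministic. The first probability is bounded by Lemma \ref{L:fauxIto}(a) and the second by Proposition \ref{P:Lregularity}.

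The main obstacle is that the prefactor $M(R(\mu))$ in Proposition \ref{P:Lregularity} grows with $\mu$, so $C_1$ cannot simply dominate it once $\mu$ is taken large enough to make $\mbf P(\mcl D_{\sigma,\Lambda} > \mu)$ small. The resolution is to use the form $\mbf P\pars{[L_f] > 2\lambda} \le C M(R(\mu))^{p'} \sigma^{p'}/\lambda^{p'}$, valid for $\lambda \ge M(R(\mu))$, together with Lemma \ref{L:fauxIto}, both invoked at an intermediate exponent $p' > p$; choosing $\mu = \lambda^a$ for sufficiently small $a > 0$ (depending on the polynomial growth rate of $M(R)$ in $R$ obtained by inspection of the proof of Proposition \ref{P:Lregularity}) and $p'$ large enough, both tail contributions can be arranged to be of order $\nor{f}{\oo}^p/\lambda^p$ for $\lambda$ above a deterministic threshold $\lambda_0(\mcl R,q,\theta)$. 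The remaining range $\lambda \in [1, \lambda_0]$ is handled trivially since $\mbf P \le 1$; the freedom to take any $p' \ge 1$ in the intermediate results, combined with $\sigma \le \mcl R$ to absorb extra $\sigma$-powers, closes the argument with $C_1$ depending only on $(\mcl R, q, \theta)$.
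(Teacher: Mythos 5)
Your proposal follows the paper's proof essentially step by step: represent $u$ via the Lax--Oleinik formula \eqref{pathwiseformula}, use Lemma~\ref{L:minimizer} to localize near-minimizers to a ball of radius controlled by $\mcl D_{\sigma,\Lambda}$, apply a union bound over $\{\mcl D_{\sigma,\Lambda} \text{ large}\}$ and the event that $[L_f]$ is large on a deterministic domain, and use the polynomial growth of $M(R)$ together with a threshold $\mu \sim \lambda^a$ and an intermediate exponent $p'>p$ to close. This is the same decomposition and the same two lemmas the paper invokes.

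One step as written would fail: you dispose of the range $\lambda \in [1,\lambda_0]$ by bounding $\mbf P \le 1$ and absorbing into $C_2$. But the target bound is $C_2\nor{f}{\oo}^p/\lambda^p$, and $\nor{f}{\oo}$ may be arbitrarily small under the hypothesis $\nor{f}{\oo}\cdot\nor{Df}{\oo}+\nor{f}{\oo}+\nor{u_0}{\oo}\le\mcl R$; so $1 \le C_2\nor{f}{\oo}^p/\lambda^p$ cannot hold with $C_2=C_2(\mcl R,p,q,\theta)$. The fix is the one the paper implicitly uses: do not split into $\lambda \ge \lambda_0$ and $\lambda < \lambda_0$, but instead take $C_1$ large enough (depending on $\mcl R,q,\theta$ and the polynomial growth constants of $M$) that $C_1 + \lambda - M\bigl(R(\lambda^a)\bigr) \ge \max\{1,\lambda/2\}$ for all $\lambda \ge 1$; then Proposition~\ref{P:Lregularity}, applied with $\lambda'' := C_1 + \lambda - M\bigl(R(\lambda^a)\bigr)$, yields the correct $\nor{f}{\oo}^{p'}/\lambda^{p'}$ tail uniformly, and you finish as planned by downgrading $p'$ to $p$ via $\nor{f}{\oo}\le\mcl R$.
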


\begin{proof}
	The function $u$ is given by
	\[
		u(x,t) = \inf_{y \in \RR^d} \pars{ u_0(y) + L_f(y,x,0,t)}.
	\]
	Assume that $t > 1/\mcl R$ and $|x| \le \mcl R$. Then Lemma \ref{L:minimizer} gives the upper bound, for some $C = C(\mcl R,q,\theta) > 0$,
	\[
		u(x,t) \le u_0(x) + L_f(x,x,0,t) \le C(1 + \mcl D_{\sigma,\Lambda}).
	\]
	It then follows that the infimum in the definition of $u$ can be taken over $|y-x| \le R'$, where, for another constant $C = C(\mcl R,q,\theta) > 0$ and for all $\omega \in \Omega$,
	\[
		R'(\omega) := C(1 + \mcl D_{\sigma,\Lambda}(\omega)).
	\]
	
	It can be verified that the constant $M$ from Proposition \ref{P:Lregularity} has polynomial growth in $R$, that is, for some $M' = M'(T,\tau,\sigma_0, K,\theta,q) > 0$ and $a = a(\theta,q) > 0$,
	\[
		M(R,T,\tau,\sigma_0,K,\theta,q) \le M'(1 + R^a).
	\]
	We also have, for all $x,y \in B_{\mcl R}$ and $s,t \in [1/\mcl R,\mcl R]$,
	\[
		|u(x,s) - u(y,t)| \le \sup_{|x-z| \le R', |y-z| \le R'} \abs{L_f(z,x,0,s) - L_f(z,y,0,t)}.
	\]
	From this, we conclude that, for some $C_1 = C_1(\mcl R,q,\theta) > 0$ and $C_2 = C_2(\mcl R,p,q,\theta) > 0$ and for any $\lambda,\lambda' \ge 1$,
	\begin{align*}
		\mbf P \pars{ [u]_{C^\theta_x C^{\theta/q}_t(B_{1/\mcl R} \times [1/\mcl R,\mcl R] ) } > C_1 + \lambda}
		&\le \mbf P (R' > \lambda') + \mbf P \pars{ [L_f]_{C^\theta_{x,y} C^{\theta/q}_{s,t} (B_{\lambda'}^2 \times [1/\mcl R, \mcl R]^2 ) } > C_1 + \lambda}\\
		&\le C_2 \pars{ \frac{1}{(\lambda')^{p(1+a)}} + \frac{1 + (\lambda')^{pa(1+a)}}{\lambda^{p(1+a)}} } \nor{f}{\oo}^{p(1+a)}.
	\end{align*}
	The proof is finished upon choosing $\lambda' = \lambda^{1/(1+a)}$ and using the fact that $\nor{f}{\oo}^{1+a} \le \mcl R^a\nor{f}{\oo}$.
	
\end{proof}

\section{Homogenization}\label{S:homog}

We now turn to the proof of Theorem \ref{T:introhomog}, which concerns the convergence, as $\eps \to 0$, of solutions of the stochastically perturbed initial value problem
\begin{equation}\label{E:mainrepeat}
	u^\eps_t + H(Du^\eps) = F\pars{ \frac{x}{\eps}, \frac{t}{\eps},\omega}  \quad \text{in } \RR^d \times (0,\oo) \times \Omega \quad \text{and} \quad u^\eps(x,0,\omega) = u_0(x) \quad \text{in } \RR^d \times \Omega
\end{equation}
to the solution of the effective equation
\begin{equation}\label{E:limitrepeat}
	\oline{u}_t + \oline{H}(D \oline{u}) = 0 \quad \text{in } \RR^d \times (0,\oo) \quad \text{and} \quad \oline{u}(\cdot,0) = u_0 \quad \text{in } \RR^d.
\end{equation}

We restate the theorem here with more precise hypotheses and conclusions. Recall that $F$ is given by
\[
	F(x,t,\omega) = \sum_{i=1}^m f^i (x,\omega) \dot B^i(t,\omega) \quad \text{for } (x,t) \in \RR^d \times [0,\oo),
\]
where $f(\cdot,\omega) \in C^1_b(\RR^d)$ is a stationary-ergodic random field and $B$ is a standard $m$-dimensional Brownian motion that is independent of $f$.

Below, the constant $M_0 > 0$ is such that
\[
	\mbf P \pars{ \nor{f}{C^1} \le M_0 } = 1.
\]

\begin{theorem}\label{T:homog}
	Assume that $H$ satisfies \eqref{A:Hestimates} and the random field $F$ satisfies \eqref{A:randomfield}. Then there exists a deterministic, convex $\oline{H}: \RR^d \to \RR$ satisfying the bounds
	\begin{equation}\label{Hbarestimates}
		\frac{1}{C}|p|^q - C \le \oline{H}(p) \le C(|p|^q + 1) \quad \text{for some $C = C(M_0) > 1$ and all $p \in \RR^d$}
	\end{equation}
	such that, if $u_0 \in BUC(\RR^d)$ and $u^\eps$ solves \eqref{E:mainrepeat}, then, as $\eps \to 0$ and with probability one, $u^\eps$ converges locally uniformly to the viscosity solution $\oline{u}$ of \eqref{E:limitrepeat}.
\end{theorem}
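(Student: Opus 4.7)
The plan is to apply the multiparameter subadditive ergodic theorem of Akcoglu-Krengel \cite{AK} to the random Lagrangian $L_f$ of Section \ref{S:randomL}, then transfer the resulting large-scale behavior back to solutions via the Lax-Oleinik formula \eqref{pathwiseformula}. Rescaling $r \to \eps r$ and $y \to y/\eps$ in the infimum formula yields
\[
	u^\eps(x,t) = \inf_{y \in \RR^d} \pars{ u_0(y) + \eps L_f(y/\eps, x/\eps, 0, t/\eps; \omega) },
\]
so the entire problem reduces to understanding the large-scale behavior of $L_f$. The running theme is that $L_f$ carries no deterministic pointwise bounds, only probabilistic ones through $\mcl D_{\sigma,\Lambda}$; the scale-invariant regularity from Proposition \ref{P:Lregularity} is precisely what keeps the argument from deteriorating as $\eps \to 0$.

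First I verify the three ingredients needed for the subadditive ergodic theorem: $L_f$ is subadditive in time (concatenate almost-minimizing paths on $[s,u]$ and $[u,t]$), jointly stationary under the spatial shifts $(x,y) \mapsto (x+z,y+z)$ by \eqref{A:shifts}, and $\mbf E|L_f(0,Tv,0,T)| < \oo$ by Lemma \ref{L:minimizer} (since $\mcl D_{\sigma,\Lambda} \in L^1$). The Akcoglu-Krengel theorem in the form used in \cite{AS,ACS} then yields, for each fixed $v \in \RR^d$ and outside a $v$-dependent null set,
\[
	\lim_{T \to \oo} \frac{1}{T} L_f(0, Tv, 0, T; \omega) = \oline L(v),
\]
for some deterministic $\oline L : \RR^d \to \RR$. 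Convexity of $\oline L$ follows from convexity in $v$ of $T^{-1} L_f(0,Tv,0,T)$ (via a standard path-convexification), and the $q'$-superlinear growth with matching upper bound comes directly from \eqref{randomLbounds}.

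To promote the pointwise-in-$v$ convergence to a locally uniform one on a single full-measure event, I apply Proposition \ref{P:Lregularity} to the rescaled field $\eps^{1/2} f(\cdot/\eps)$. The product $\sigma\Lambda$ equals $\nor{f}{\oo}\nor{Df}{\oo}$ and is bounded by $M_0^2$ independently of $\eps$, so the regularity exponents and moment constants are uniform in $\eps$. A Borel-Cantelli argument along $\eps_n = 2^{-n}$, combined with density of a countable $Q \subset \RR^d$, upgrades the pointwise limit to
\[
	\lim_{\eps \to 0} \eps L_f(0, v/\eps, 0, t/\eps; \omega) = t \, \oline L(v/t) \quad \text{a.s., locally uniformly in } (v,t) \in \RR^d \times (0,\oo),
\]
and the shift-invariance \eqref{A:shifts} handles general starting points. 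Setting $\oline H := \oline L^*$ and $\oline u(x,t) := \inf_y (u_0(y) + t \oline L((x-y)/t))$, the bound \eqref{Hbarestimates} is immediate from \eqref{Hstarbounds} applied to $\oline L$, and $\oline u$ solves \eqref{E:limitrepeat} by standard Lax-Oleinik theory. The convergence $u^\eps \to \oline u$ then follows by inserting the uniform limit above into the infimum formula, using the coercive lower bound in \eqref{randomLbounds} together with $u_0 \in BUC(\RR^d)$ to restrict the effective infimum to a random but almost surely bounded set of $y$'s.

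The step I expect to be genuinely delicate is the uniform-in-$v$ convergence: without a regularity estimate for $L_f$ that survives the $(x,t) \to (x/\eps,t/\eps)$ scaling, one has pointwise-in-$v$ convergence from Akcoglu-Krengel but no way to rule out $O(1)$ oscillations in $v$, because the stochastic forcing $f \cdot \dot B$ is not even pointwise defined in time. The classical route of turning pointwise convergence into uniform convergence via deterministic equi-H\"older bounds fails, since such bounds would depend on $\nor{Df}{\oo}$ and blow up under rescaling. The decomposition in Lemma \ref{L:fauxIto} and its consequence Proposition \ref{P:Lregularity} circumvent this by producing regularity estimates whose constants depend on $f$ only through the product $\nor{f}{\oo}\nor{Df}{\oo}$, which is exactly what makes the whole homogenization argument go through.
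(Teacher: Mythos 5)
Your overall strategy matches the paper's: apply the Akcoglu--Krengel subadditive ergodic theorem to $L_f$, use the scale-invariant regularity from Proposition \ref{P:Lregularity} together with Borel--Cantelli to obtain equicontinuity of the rescaled Lagrangians (this is the paper's Lemma \ref{L:Lepsreg}), set $\oline H = \oline L^*$, and conclude via the Lax--Oleinik formula. You also correctly identify the key obstruction --- that deterministic equi-H\"older bounds would blow up under rescaling --- and correctly explain why Lemma \ref{L:fauxIto}/Proposition \ref{P:Lregularity} rescue the argument. However, the ergodic-theoretic portion of your proposal has three genuine gaps.

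First, the stationarity structure. You verify stationarity ``under the spatial shifts $(x,y) \mapsto (x+z,y+z)$ by \eqref{A:shifts}.'' That is not the correct invariance for the subadditive process $\phi([a,b),\omega) := L_f(ap,bp,a,b,\omega)$. Shifting the time-interval index $[a,b) \mapsto [a,b) + t$ moves both the spatial endpoints \emph{and} the time window of the stochastic integral $\int f(\gamma_r)\cdot dB_r$, so the required measure-preserving map is the joint spatio-temporal shift $\sigma_t = \bs\tau_{tp,t} = \tau_{tp} \circ \tau'_t$, where $\tau'_t$ re-bases the Brownian motion (i.e.\ $B(\cdot,\tau'_t\omega) = B(t+\cdot,\omega) - B(t,\omega)$). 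The purely spatial shift $\tau_{tp}$ leaves $B$ untouched by \eqref{A:shifts} and therefore does not produce $L_f(ap,bp,a,b,\sigma_t\omega) = L_f((a+t)p,(b+t)p,a+t,b+t,\omega)$. Ergodicity must also be proved for this combined group, which relies on the independence of $f$ and $B$ and the Markov property of Brownian motion; the paper sets this up in subsection \ref{SS:productmeasure}.

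Second, the integrability hypothesis of Akcoglu--Krengel. You dispatch it with ``$\mbf E|L_f(0,Tv,0,T)|<\oo$ by Lemma \ref{L:minimizer} (since $\mcl D_{\sigma,\Lambda} \in L^1$),'' but the theorem actually requires $\inf_{T>0} T^{-1}\mbf E\,\phi([0,T),\cdot) > -\oo$, a lower bound \emph{uniform in the time horizon}. This does not follow from Lemma \ref{L:minimizer}: the random variable $\mcl D_{\sigma,\Lambda}$ in Lemma \ref{L:fauxIto} is constructed on a \emph{fixed} time interval $[0,T]$ and the moment constants grow with $T$. The almost-minimizing path $\gamma$ is not adapted to $B$, so there is no martingale structure to guarantee that $T^{-1}\int_0^T f(\gamma_r)\cdot dB_r$ has bounded mean as $T \to \infty$. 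The paper devotes a dedicated decomposition (Step 1 of the proof of Lemma \ref{L:identifyLbar}) to this, with a mesh size $h$ chosen differently from the one in Lemma \ref{L:fauxIto}, and then exhibits an explicit unbounded-$f$ example where the condition fails --- which shows the step is genuinely nontrivial.

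Third, ``the shift-invariance \eqref{A:shifts} handles general starting points'' is not a proof. Using stationarity, $L^\eps(x,y,s,t,\omega) = \eps L_f(0,(y-x)/\eps,0,(t-s)/\eps,\bs\tau_{x/\eps,s/\eps}\omega)$, so the $\omega$-argument at which you need Akcoglu--Krengel convergence \emph{moves with $\eps$} along the orbit of $\omega$. Pointwise a.s.\ convergence at a single $\omega$ says nothing about convergence along this moving sequence. The standard fix, which the paper carries out in Lemma \ref{L:Lbarlocaluniform}, is an Egoroff--plus--multiparameter-ergodic-theorem argument: extract a set $G_\eta$ of probability $\ge 1-\eta$ on which the convergence is uniform, use Becker's multiparameter ergodic theorem to show that $\bs\tau_{x/\eps,s/\eps}\omega \in G_\eta$ for a proportion $\ge 1-2\eta$ of $(x,s)$ in a compact set once $\eps$ is small, and then use the equicontinuity from Lemma \ref{L:Lepsreg} to pass from a nearby good base point $(x_\eps, s_\eps)$ to the given $(x,s)$. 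Without this step your argument only yields convergence for $L^\eps$ based at the origin, which is insufficient to identify the homogenized solution over all of $\RR^d \times (0,\oo)$.
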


Much of the proofs that follow proceed similarly to those in \cite{JSTvisc, LShomog, RT, Schwab, S}, with some new difficulties arising because of the singular nature of the forcing term.

The solution $u^\eps$ has the control-theory representation
\begin{equation}\label{uepsformula}
	u^\eps(x,t) := \inf_{y \in \RR^d} \left\{ u_0(y) + L^\eps(y,x,s,t,\omega) \right\},
\end{equation}
where
\begin{equation}\label{Leps}
	L^\eps(x,y,s,t,\omega) := \inf \left\{ \int_s^t \left[ H^*\pars{ \dot \gamma_r} + F\pars{ \frac{\gamma_r}{\eps},\frac{r}{\eps},\omega} \right] dr  : \gamma \in \mcl A(x,y,s,t) \right\}.
\end{equation}
Indeed, \eqref{uepsformula} holds if $B$ is replaced with a continuously differentiable path (see \cite{Lbook}), and, as a consequence of Lemmas \ref{L:forcingstability} and \ref{L:distancestability}, the equality continues to hold for arbitrary continuous $B$, and, in particular, sample paths of Brownian motion.

It will be useful to rewrite $L^\eps$ in two different ways. First, if we set $B^\eps(r,\omega) := \eps^{1/2} B(r/\eps,\omega)$, then
\begin{equation}\label{Lepsalt}
	L^\eps(x,y,s,t, \omega) = \inf \left\{ \int_s^t  H^*\pars{ \dot \gamma_r}dr + \eps^{1/2} \int_s^t f(\eps^{-1} \gamma_r,\omega) \cdot dB^\eps_r(\omega)   : \gamma \in \mcl A(x,y,s,t) \right\}.
\end{equation}
Also, by rescaling the paths $\gamma \in \mcl A(x,y,s,t)$, we see that
\[
	L^\eps(x,y,s,t,\omega) = \eps L\pars{ \frac{x}{\eps}, \frac{y}{\eps}, \frac{s}{\eps}, \frac{t}{\eps} ,\omega},
\]
where $L := L^1$ is given by
\begin{equation}\label{L}
	L(x,y,s,t,\omega) := \inf \left\{ \int_s^t  H^*\pars{ \dot \gamma_r}dr + \int_s^t f(\gamma_r,\omega) \cdot dB_r (\omega)  : \gamma \in \mcl A(x,y,s,t) \right\}.
\end{equation}
We note that, for $(x,y,s,t) \in \RR^d \times \RR^d \times [0,T] \times [0,T]$ and $\omega \in \Omega$,
\[
	L(x,y,s,t,\omega) = L_{f(\cdot,\omega)}(x,y,s,t,\omega),
\]
where, for fixed $f \in C^1_b(\RR^d,\RR^m)$, $L_f$ is defined as in \eqref{Lf}.

\subsection{Uniform regularity of the Langrangians}

We use Proposition \ref{P:Lregularity} and the Borel-Cantelli lemma to show that the family $(L^\eps)_{\eps > 0}$ is locally uniformly equicontinuous for $\eps$ smaller than some random threshold.

Recall that we define, for $R > 0$ and $0 < \tau < T$, the domain
\[
	U_{\tau,R,T} := \left\{ (x,y,s,t) \in \RR^d \times \RR^d \times [0,T] \times [0,T] : |x-y| \le R \text{ and } \tau < t - s < T \right\},
\]
and the parameters $\alpha \in (0,1/2)$ and $\theta \in (0,q/(2+q))$ are related by
\[
	\theta = \frac{\alpha q}{1 + \alpha q}.
\]

\begin{lemma}\label{L:Lepsreg}
	Let $L^\eps$ be given by \eqref{Leps}, where $H$ and $F$ are as in \eqref{A:Hestimates} and \eqref{A:randomfield}. Then, for all $0 < \theta < \frac{q}{2+q}$, $R > 0$, and $0 < \tau < T$, there exists a constant $C = C(R,T,\tau,M_0, q,\theta) > 0$ and a random variable $\eps_0: \Omega \to \RR_+$ independent of $f$ such that $\mbf P(\eps_0 > 0) = 1$ and
	\[
		\mbf P\pars{ \left[ L^\eps \right]_{C_{x,y}^{\theta} C_{s,t}^{\theta/q}(U_{R,T,\tau}) }\le C \; \text{for all } 0 < \eps < \eps_0 } = 1.
	\]
\end{lemma}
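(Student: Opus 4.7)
The plan is to exploit the scaling identity $L^\eps(x,y,s,t,\omega) = \eps\, L(x/\eps, y/\eps, s/\eps, t/\eps, \omega)$, combine it with Proposition \ref{P:Lregularity} applied on a sequence of dyadically enlarged domains, and conclude with the Borel--Cantelli lemma. The first step is to compute how the $C^\theta_{x,y} C^{\theta/q}_{s,t}$-seminorm transforms under this scaling: using $\eps \in (0,1)$ together with $\theta > \theta/q$ to bound the denominator from below by $\eps^\theta$ times the unscaled expression, one obtains
\[
[L^\eps]_{C^\theta_{x,y} C^{\theta/q}_{s,t}(U_{\tau,R,T})} \;\le\; \eps^{1-\theta}\, [L]_{C^\theta_{x,y} C^{\theta/q}_{s,t}(U_{\tau/\eps,\,R/\eps,\,T/\eps})}.
\]
It therefore suffices to show that, with probability one, there is a $\sigma(B)$-measurable random index $N(\omega)$ such that the bound $[L]_{C^\theta_{x,y} C^{\theta/q}_{s,t}(U_{\tau \cdot 2^n,\, R \cdot 2^n,\, T \cdot 2^n})} \le C\, 2^{n(1-\theta)}$ holds simultaneously for every dyadic level $\eps_n := 2^{-n}$ with $n \ge N(\omega)$, where $C = C(R,T,\tau,M_0,q,\theta)$.

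Next I would apply Proposition \ref{P:Lregularity} at each dyadic level with $\sigma = M_0$ and $\Lambda = M_0$ (hence $\sigma\Lambda = M_0^2$), justified by \eqref{Fboundedas}. A careful tracking of constants through the proof of that proposition --- analogous to the polynomial-growth remark used in the proof of Theorem \ref{T:regularity} --- shows that the threshold constant satisfies $M_n \le C\, 2^{na}$ for some exponent $a = a(\theta,q) > 0$. Setting $\lambda_n := 2^{n(1-\theta)}$, Proposition \ref{P:Lregularity} then yields, for every $p \ge 1$,
\[
\mbf P \pars{ \sup_{f \in F_{M_0,M_0}} [L_f]_{C^\theta_{x,y} C^{\theta/q}_{s,t}(U_{\tau \cdot 2^n,\, R \cdot 2^n,\, T \cdot 2^n})} > M_n + \lambda_n } \;\le\; \frac{C_p\, M_n^p M_0^p}{\lambda_n^p} \;\le\; C_p'\, 2^{-np(1-\theta - a)}.
\]
Provided $a < 1 - \theta$, taking $p$ large enough renders the right-hand side summable in $n$; crucially, since the supremum over $f \in F_{M_0,M_0}$ has already been absorbed into the left-hand side, the bad event lies in $\sigma(B)$ rather than in $\sigma(f,B)$.

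Finally, Borel--Cantelli furnishes an a.s.\ finite, $\sigma(B)$-measurable last index $N(\omega)$ beyond which the bad events do not occur, and I set $\eps_0 := 2^{-N(\omega)}$, which is automatically independent of $f$. For an arbitrary $\eps \in (0, \eps_0)$ one picks $n \ge N$ with $\eps_n \le \eps < 2\eps_n$; since $U_{\tau/\eps,\,R/\eps,\,T/\eps} \subset U_{\tau \cdot 2^{n-1},\, R \cdot 2^{n+1},\, T \cdot 2^{n+1}}$, a further application of the dyadic bound (at level $n+1$, absorbing harmless numerical factors into the final $C$) together with the scaling estimate of the first paragraph yields the uniform conclusion $[L^\eps]_{C^\theta_{x,y} C^{\theta/q}_{s,t}(U_{\tau,R,T})} \le C$.

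The only substantive technical point is the polynomial-growth claim $M_n \le C\, 2^{na}$ with $a < 1 - \theta$; this requires bookkeeping of the $R$-, $T$- and $\tau$-dependence through Lemma \ref{L:fauxIto}, Lemma \ref{L:minimizer}, and Steps 1--2 of the proof of Proposition \ref{P:Lregularity} --- in particular the $|y-x|^{q'}/(t-s)^{q'-1}$ terms bounding the $W^{1,q'}$-norms of almost-minimizers and the $T$-dependence of the random variable $\mcl D_{\sigma,\Lambda}$. The remaining ingredients (the scaling identity, the Borel--Cantelli summability, and the dyadic-to-continuous interpolation) are essentially routine.
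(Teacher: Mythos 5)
The scaling identity $[L^\eps]_{C^\theta_{x,y}C^{\theta/q}_{s,t}(U_{\tau,R,T})} \le \eps^{1-\theta}[L]_{C^\theta_{x,y}C^{\theta/q}_{s,t}(U_{\tau/\eps,R/\eps,T/\eps})}$ is correct, and the overall Borel--Cantelli structure is sound. However, the hypothesis on which the whole argument rests --- that the deterministic threshold $M_n = M(R\cdot 2^n, T \cdot 2^n, \tau \cdot 2^n, \ldots)$ in Proposition \ref{P:Lregularity} satisfies $M_n \le C\,2^{na}$ with $a < 1-\theta$ --- is false, and this is not a matter of bookkeeping to be deferred: it kills the summability. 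In Step 1 of the proof of Proposition \ref{P:Lregularity}, the spatial-increment estimate produces the linear term $|\tilde y - y|$, coming from the additive $1$ in $\abs{H^*(p_1) - H^*(p_2)} \le C(1 + |p_1|^{q'-1} + |p_2|^{q'-1})|p_1-p_2|$; bounding $|\tilde y - y|$ by $M\,|\tilde y - y|^\theta$ on $U_{\tau,R,T}$ forces $M \ge (2R)^{1-\theta}$. Hence $M_n \gtrsim 2^{n(1-\theta)}$, i.e.\ $a \ge 1-\theta$ always. With $\lambda_n = 2^{n(1-\theta)}$ the ratio $M_n^p M_0^p/\lambda_n^p$ is bounded below away from zero uniformly in $n$, so the series does not converge; and no other choice of $\lambda_n$ helps, because the requirement $M_n + \lambda_n \lesssim 2^{n(1-\theta)}$ (needed to conclude $[L^{2^{-n}}] \le C$ after the $\eps^{1-\theta}$ factor) forces $\lambda_n \lesssim M_n$.

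The mechanism that actually makes Borel--Cantelli work here is different. The paper does not rescale the domain at all: it keeps $U_{\tau,R,T}$ fixed and instead invokes the alternative representation \eqref{Lepsalt}, $L^\eps = L_{f^\eps}$ driven by the rescaled Brownian motion $B^\eps(r) = \eps^{1/2}B(r/\eps)$ (which is again a standard Brownian motion), where $f^\eps := \eps^{1/2}f(\cdot/\eps) \in F_{\sigma^\eps,\Lambda^\eps}$ with $\sigma^\eps = M_0\eps^{1/2}$, $\Lambda^\eps = M_0\eps^{-1/2}$, and $K = \sigma^\eps\Lambda^\eps = M_0^2$ independent of $\eps$. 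Since $U_{\tau,R,T}$ and $K$ are now fixed, the constants $M$ and $C$ in Proposition \ref{P:Lregularity} do not move with $\eps$, and the summability comes entirely from the $(\sigma^\eps)^p = M_0^p\,\eps^{p/2}$ factor in the tail bound, with a \emph{fixed} threshold $\lambda = 1$. In short: the decay that you need has to come from $\sigma \to 0$, not from enlarging the domain; your choice $\sigma = \Lambda = M_0$ discards exactly the parameter that the proposition was engineered to exploit.
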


\begin{proof}
	For each fixed $\eps \in (0,1)$, we apply Proposition \ref{P:Lregularity} using the formula \eqref{Lepsalt} for $L^\eps$, with
	\[
		f^\eps := \eps^{1/2} f(\cdot/\eps) \quad \text{and} \quad B^\eps = \eps^{1/2} B(\cdot/\eps).
	\]
	Then $f^\eps \in F_{\sigma^\eps,\Lambda^\eps}$, where
	\[
		\sigma^\eps := M_0 \eps^{1/2}, \quad \Lambda^\eps := \frac{M_0}{\eps^{1/2}},
	\]
	and $F_{\sigma^\eps,\Lambda^\eps}$ is defined as in \eqref{C1subspace}. Note that, $K := \sigma^\eps \Lambda^\eps = M_0^2$ is independent of $\eps > 0$. Then there exists a constant $C_1 = C_1(R,T,\tau,M_0,q,\theta) > 0$ and, for all $p \ge 1$, a constant $C_2 = C_2(R,T,\tau,M_0,p,q,\theta) > 0$ such that, for all $\lambda \ge 1$ and $\eps \in (0,1)$,
	\[
		\mbf P \pars{ \sup_{f \in F_{\sigma^\eps,\Lambda^\eps} } [L_f]_{C_{x,y}^{\theta}C_{s,t}^{\theta/q}(U_{R,T,\tau})} > C_1 + \lambda} \le C_2 \eps^{p/2}\lambda^{-p}.
	\]
	Define
	\[
		A_{\eps} := \left\{  \sup_{f \in F_{\sigma^{\eps},\Lambda^{\eps}} } [L_f]_{C_{x,y}^{\theta}C_{s,t}^{\theta/q}(U_{R,T,\tau})} > C_1 + 1 \right\},
	\]
	and observe that $A_{\eps}$ is independent of the random field $\left\{ \omega \mapsto f(\cdot,\omega)\right\}$. Because
	\[
		\mbf P \pars{A_{2^{-k}}} \le C_2 2^{-kp/2},
	\]
	the Borel-Cantelli lemma yields that, for some $k_0: \Omega \to \NN$ that is independent of the random field $f$, for $\mbf P$-almost every $\omega \in \Omega$, and for all $k \ge k_0(\omega)$,
	\[
		\left[ L^{2^{-k}}(\cdot,\omega) \right]_{C_{x,y}^{\theta}C_{s,t}^{\theta/q}(U_{R,T,\tau})} \le C_1 + 1.
	\]
	For $\omega \in \Omega$ and $0 < \eps < \eps_0(\omega) := 2^{-k_0(\omega)}$, we choose $k > k_0(\omega)$ such that
	\[
		2^{-k-1} < \eps \le 2^{-k}.
	\]
	Then
	\[
		L^\eps(x,y,s,t,\cdot) = \eps L\pars{ \frac{x}{\eps}, \frac{y}{\eps}, \frac{s}{\eps}, \frac{t}{\eps} ,\cdot} = 2^{k+1}\eps L^{2^{-k-1}} \pars{2^{-k-1}\eps^{-1} x, 2^{-k-1}\eps^{-1}y, 2^{-k-1}\eps^{-1}s, 2^{-k-1}\eps^{-1} t,\cdot},
	\]
	and therefore, for $\mbf P$-almost every $\omega \in \Omega$ and all $\eps \in (0,\eps_0(\omega))$,
	\[
		[L^{\eps}(\cdot,\omega)]_{C_{x,y}^{\theta}C_{s,t}^{\theta/q}(U_{R,T,\tau})} \le 2 (C_1+1).
	\]
\end{proof}

\subsection{The stationary-ergodic, spatio-temporal environment}\label{SS:productmeasure}

The temporal white noise term $\dot B$ is stationary, uncorrelated, and independent from $f$, and, as a consequence, the spatio-temporal environment generated by the random field $F$ is stationary-ergodic. More precisely, we may assume without loss of generality that the probability measure $\mbf P$ is such that there exists a collection of transformations
\[
	(\tau'_t)_{t \ge 0}: \Omega \to \Omega
\]
such that, for all $s, t \ge 0$ and $\omega \in \Omega$,
\[
	\tau'_{s+t} = \tau'_s \circ \tau'_t,
	\quad
	\mbf P \circ \tau'_t = \mbf P, \quad 
	B(s,\tau'_t \omega) = B(t+s,\omega) - B(t,\omega), 
	\quad \text{and} \quad
	f(\cdot,\tau'_t \omega) = f(\cdot,\omega).
\]
For $(x,t) \in \RR^d \times [0,\oo)$, we now set
\[
	\bs{\tau}_{x,t} := \tau_x \circ \tau'_t : \Omega \to \Omega.
\]
It is clear that $\bs{\tau}_{x,t}$ preserves $\mbf P$ for any $(x,t) \in \RR^d \times [0,\oo)$. Moreover the collection is ergodic with respect to $\mbf P$:
\[
	\text{if } A \in \mbf F \text{ and } \bs\tau_{x,t} A = A \text{ for all } (x,t) \in \RR^d \times [0,\oo), \text{ then } \mbf P\pars{ A }\in \{0, 1\}.
\]
As noted in Section \ref{S:A}, we may assume that $\Omega = X \times Y$ and $\mbf P = \mu \otimes \nu$, where $X = C^1_b(\RR^d,\RR^m)$, $Y \in C([0,\oo),\RR^m)$, $\mu$ is a probability measure that is stationary and ergodic with respect to translations in space, and $\nu$ is the Wiener measure. The transformation $\tau'_t$ can then be realized as
\[
	\tau_t' B := B(t + \cdot) - B(t) \quad \text{for all } B \in Y,
\]
and the stationarity of $\nu$ with respect to $\tau_t'$ is a consequence of the Markov property of Brownian motion.

\subsection{Identification of the effective Lagrangian}
We next use the sub-additive ergodic theorem to establish the almost-sure, local uniform convergence of $L^\eps$ to a deterministic, effective quantity. This relies on the sub-additivity and stationarity of $L$ defined by \eqref{L}, namely, for all $x,y,z \in \RR^d$, $s < r < t$, $q \in [0,\oo)$, and $\omega \in \Omega$,
\begin{equation}\label{Lstationary}
	L(x,y,s,t,\bs \tau_{z,q} \omega) = L(x + z,y+z,s+q,t+q,\omega)
\end{equation}
and
\begin{equation}\label{Lsubadditive}
	L(x,y,s,t,\omega) \le L(x,z,s,r,\omega) + L(z,y,r,t,\omega),
\end{equation}
both of which can proved using appropriate manipulations of the minimizing paths in the definition of $L$, invoking the stationarity of $F$ to prove the former.

We first identify the effective Lagrangian as the long-time average of $L$.

\begin{lemma}\label{L:identifyLbar}
	Assume \eqref{A:Hestimates}, \eqref{A:randomfield}, and that $L$ is given by \eqref{L}. Then there exists a deterministic, convex function $\oline{L}: \RR^d \to \RR$ such that, for some $C = C(M_0) > 1$,
	\begin{equation}\label{Lbarbounds}
		\frac{1}{C} |p|^{q'} - C \le \oline{L}(p) \le C (|p|^{q'} + 1) \quad \text{for all } p \in \RR^d,
	\end{equation}
	and, with probability one and for all $R > 0$,
	\[
		\lim_{T \to +\oo}  \sup_{|p| \le R} \abs{\frac{1}{T} L(0,Tp,0,T,\cdot) - \oline{L}(p)} = 0.
	\]	
\end{lemma}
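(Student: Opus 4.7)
The plan is to apply the continuous-parameter subadditive ergodic theorem of Akcoglu and Krengel (\cite{AK}) along each direction $p$, and then upgrade pointwise convergence to local uniform convergence via the regularity of $L^\eps$.

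For fixed $p \in \RR^d$, consider the two-parameter process $X_{s,t}(\omega) := L(sp, tp, s, t, \omega)$ for $0 \le s \le t$. The identities \eqref{Lsubadditive} and \eqref{Lstationary} translate into $X_{s,t} \le X_{s,r} + X_{r,t}$ and $X_{s,t}(\bs\tau_{rp, r}\omega) = X_{s+r, t+r}(\omega)$, while the integrability $\mbf E |X_{0,1}| < \infty$ follows from the bounds in Lemma \ref{L:minimizer} on a fixed time interval. Akcoglu--Krengel then delivers a $(\bs\tau_{rp, r})$-invariant random variable $\xi_p:\Omega \to \RR$ such that $T^{-1} X_{0,T} \to \xi_p$ almost surely and in $L^1$.

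The main obstacle is upgrading $\xi_p$ to a deterministic constant, because the one-parameter flow $(\bs\tau_{rp, r})$ need not be ergodic by itself. The plan is to lift its invariance to the full ergodic group $(\bs\tau_{z, q})$ described in Section \ref{SS:productmeasure}; since $\xi_p$ is already $(\bs\tau_{rp, r})$-invariant, it suffices to verify the spatial invariance $\xi_p(\bs\tau_{z, 0}\omega) = \xi_p(\omega)$. For $\delta \in (0, T/2)$, subadditivity gives
\[
L(z, z + Tp, 0, T, \omega) \le L(z, 0, 0, \delta, \omega) + L(0, Tp, \delta, T - \delta, \omega) + L(Tp, z + Tp, T - \delta, T, \omega).
\]
The first outer term is $T$-independent, and the third, via \eqref{Lstationary}, equals $L(0, z, 0, \delta, \bs\tau_{Tp, T - \delta}\omega)$, which by Lemma \ref{L:minimizer} is dominated by $C(1 + \mcl D_{\sigma, \Lambda})$ at a shifted $\omega$; a Borel--Cantelli argument along a dyadic sequence of $T$'s shows these are $o(T)$ almost surely. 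The middle term, again via \eqref{Lstationary}, equals $L(0, Tp, 0, T - 2\delta, \bs\tau_{0, \delta}\omega)$, and the scaling identity $T^{-1} L(0, Tp, 0, T, \omega) = L^{1/T}(0, p, 0, 1, \omega)$ combined with the H\"older regularity in time from Lemma \ref{L:Lepsreg} shows that $T^{-1}$ of it converges to $\xi_p(\bs\tau_{0, \delta}\omega)$. Combined with the reverse inequality and the $(\bs\tau_{rp, r})$-invariance, this produces spatial invariance of $\xi_p$; ergodicity then gives $\xi_p \equiv \oline{L}(p)$ almost surely.

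The bounds \eqref{Lbarbounds} and convexity of $\oline{L}$ follow by standard arguments. The upper bound comes from testing the Lagrangian action on the linear trajectory $r \mapsto rp$: the stochastic forcing $T^{-1}\int_0^T f(rp) \cdot dB_r$ vanishes a.s.\ by the martingale law of large numbers (its quadratic variation is at most $M_0^2 T$), yielding $\oline{L}(p) \le H^*(p) \le C(|p|^{q'} + 1)$; the lower bound is immediate from Lemma \ref{L:minimizer}. Convexity comes from splitting $[0, T]$ into $[0, \lambda T]$ and $[\lambda T, T]$ traversed at constant velocities $p_1$ and $p_2$ and invoking subadditivity and stationarity. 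Finally, local uniform convergence in $p$ follows by combining the almost-sure pointwise convergence on a countable dense subset $Q \subset \RR^d$ with the uniform-in-$T$ H\"older bound on compacts for $L^{1/T}(0, p, 0, 1, \omega)$ provided by Lemma \ref{L:Lepsreg} once $1/T < \eps_0(\omega)$.
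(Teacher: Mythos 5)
There is a genuine gap in the verification of the hypotheses of the Akcoglu--Krengel theorem. You check subadditivity, stationarity, and $\mbf E|X_{0,1}|<\infty$, but you do not verify the crucial infimum condition
\[
\inf_{T>0}\ \frac{1}{T}\ \mbf E\, L(0,Tp,0,T,\cdot) > -\infty,
\]
which is what prevents the almost-sure limit $\xi_p$ from being $-\infty$. (Subadditivity alone makes $T^{-1}\mbf E\,\phi([0,T))$ essentially decreasing, so its limit is the infimum, which a priori could be $-\infty$.) This is precisely the mathematical crux of Step 1 in the paper's proof: since the almost-minimizers $\gamma$ in the definition of $L$ need not be adapted to $B$, the quantity $T^{-1}\int_0^T f(\gamma_r)\cdot dB_r$ does not automatically vanish as $T\to\infty$, even in expectation, and the paper handles this with a separate mesh decomposition (choosing a fixed mesh size $h$ depending only on $\nor{f}{\oo}$ and $\nor{Df}{\oo}$) to get a $T$-uniform lower bound. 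The example the paper gives immediately after the lemma, with $f(x)=x$ unbounded, shows $T^{-1}\mbf E\,\psi([0,T),\cdot)=-T/12\to-\infty$, i.e., exactly this condition can fail. The gap is fillable with tools already in the paper — using the scaling $T^{-1}L(0,Tp,0,T)=L^{1/T}(0,p,0,1)$ and Lemma \ref{L:minimizer} applied to $L^{1/T}$ with $\sigma=M_0T^{-1/2}$, $\Lambda=M_0T^{1/2}$, $K=M_0^2$ gives a $T$-uniform lower bound — but your proposal asserts only the $T=1$ integrability and never makes this connection, so as written the appeal to Akcoglu--Krengel is unjustified.

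Your route to showing the limit is deterministic (lifting $(\bs\tau_{rp,r})$-invariance to full $(\bs\tau_{z,q})$-invariance via a three-piece subadditivity chain with a Borel--Cantelli estimate on the outer legs) is more roundabout than the paper's, which deduces $\oline{L}(p,\omega)=\oline{L}(p,\bs\tau_{y,s}\omega)$ in one line from the scaling identity $T^{-1}L(0,Tp,0,T,\bs\tau_{y,s}\omega)=L^{1/T}(y/T,p+y/T,s/T,1+s/T,\omega)$ together with the equicontinuity supplied by Lemma \ref{L:Lepsreg}, since the perturbations of order $|y|/T+s/T$ tend to zero. Your chain argument can plausibly be made to work, but it introduces additional bookkeeping (controlling $\mcl D_{\sigma,\Lambda}$ at a $T$-dependent family of shifted $\omega$'s, and a limit with velocity $Tp/(T-2\delta)\to p$) that the paper's approach avoids. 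The derivation of the bounds \eqref{Lbarbounds} and convexity is essentially the same in both.
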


\begin{proof}
	{\it Step 1: identifying the limit.} Fix $p \in \QQ^d$, define the process $\phi$ by
	\[
		\phi\pars{ [a,b), \omega} := L(ap, bp, a,b,\omega) \quad \text{for } 0 \le a < b,
	\]
	and, for $t \ge 0$, define the measure-preserving transformation $\sigma_t: \Omega \to \Omega$ by $\sigma_t := \bs\tau_{tp,t}$.
	
	In order to apply the sub-additive ergodic theorem of Akcoglu and Krengel \cite{AK}, we need to verify that $\phi$ is a stationary, sub-additive process with respect to $(\sigma_t)_{t \ge 0}$, that is, for all $\omega \in \Omega$,
	\begin{equation}\label{applyAK}
		\left\{
		\begin{split}
			&\phi \pars{ [a,b),\sigma_t \omega} = \phi \pars{ [a,b) + t,\omega} \quad \text{for all $a < b$ and $t \ge 0$,}\\
			&\phi \pars{ [a,c),\omega} \le \phi \pars{ [a,b),\omega} + \phi\pars{ [b,c),\omega} \text{ for all $a < b < c$, and}\\
			&\inf_{T > 0} \frac{1}{T} \mbf E \phi \pars{ [0,T),\cdot} > -\oo.
		\end{split}
		\right.
	\end{equation}
	
	The stationarity and sub-additivity of $\phi$ follow from \eqref{Lstationary} and \eqref{Lsubadditive}, which give
	\[
		\phi\pars{[a,b),\sigma_t \omega} = L(ap,bp,a,b,\bs\tau_{tp,t}\omega) = L((a+t)p,(b+t)p,a+t,b+t,\omega) = \phi\pars{[a,b) + t,\omega}
	\]
	and
	\[
		\phi\pars{ [a,c),\omega} = L(ap,cp,a,c,\omega) \le L(ap,bp,a,b,\omega) + L(bp,cp,b,c,\omega) = \phi\pars{ [a,b),\omega} + \phi\pars{ [b,c),\omega}.
	\]
	It remains to prove the third item of \eqref{applyAK}. This requires bounds for the long-time averages 
	\[
		\frac{1}{T} \int_0^T f(\gamma_r) \cdot dB_r.
	\]
	The difficulty is that, as $T \to \oo$, this quantity need not converge to $0$, in view of the fact that the almost-minimizing path $\gamma$ need not be adapted to the Brownian motion $B$, as was the case in the proof of Lemma \ref{L:fauxIto}.
		
	Fix $\gamma \in \mcl A(0,Tp,0,T)$ and $h > 0$ to be determined, and let $N \in \NN$ be such that $T/h \le N < T/h + 1$. Set $\tau_k := kh$ for $k = 0,1,2,\ldots, N-1$, $\tau_N =T$, and
	\[
		X_k := \max_{r \in [\tau_{k-1},\tau_k] } \abs{ B_r - B_{\tau_{k-1}} }.
	\]
	Then Young's inequality gives, for any $\delta > 0$ and some $C = C(q) > 0$,
	\begin{align*}
		\frac{1}{T} \int_0^T f(\gamma_r) \cdot dB_r
		&= \frac{1}{T} \sum_{k=1}^N \int_{\tau_{k-1}}^{\tau_k} f(\gamma_r) \cdot dB_r \\
		&= \frac{1}{T} \sum_{k=1}^N \pars{ f(\gamma_{\tau_k}) \cdot ( B_{\tau_k} - B_{\tau_{k-1}} ) - \int_{\tau_{k-1}}^{\tau_k} Df(\gamma_r) \cdot \dot \gamma_r \cdot (B_r - B_{\tau_{(k-1)h}})dr }\\
		&\ge - \frac{\delta^{q'}}{T} \int_0^T |\dot \gamma_r|^{q'}dr - \frac{\nor{f}{\oo}}{T} \sum_{k=1}^N X_k - \frac{Ch \nor{Df}{\oo}^q}{\delta^{q}T} \sum_{k=1}^N X_k^q.
	\end{align*}
	Choosing the constant $\delta$ small enough relative to the lower bound for $H^*$ in \eqref{Hstarbounds} gives, for some $C > 1$,
	\begin{align*}
		\frac{1}{T} \int_0^T H^*(\dot \gamma_r)dr + \frac{1}{T} \int_0^T f(\gamma_r) \cdot dB_r
		&\ge \frac{1}{CT} \int_0^T |\dot \gamma_r|^{q'}dr - C\pars{ 1 +  \frac{\nor{f}{\oo}}{T} \sum_{k=1}^N X_k + \frac{h \nor{Df}{\oo}^q}{T} \sum_{k=1}^N X_k^q}\\
		&\ge - C\pars{ 1 +  \frac{\nor{f}{\oo}}{T} \sum_{k=1}^N X_k + \frac{h \nor{Df}{\oo}^q}{T} \sum_{k=1}^N X_k^q}.
	\end{align*}
	Taking expectations, we find that, for some constant $C > 0$ independent of $T$,
	\begin{align*}
		\frac{1}{T} \mbf E L(0,Tp,0,T,\omega) &\ge -C\pars{ 1 +  \frac{\nor{f}{\oo} Nh^{1/2}}{T}+ \frac{\nor{Df}{\oo}^q N h^{1 + q/2}}{T}} \\
		&\ge -C\pars{ 1 +  \frac{\nor{f}{\oo} }{h^{1/2}}+  \nor{Df}{\oo}^q h^{q/2} }.
	\end{align*}
	We now choose 
	\[
		h := \frac{ \nor{f}{\oo}^{2/(1+q)}}{ \nor{Df}{\oo}^{2q/(1+q)} },
	\]
	which leads to the lower bound
	\[
		\frac{1}{T} \mbf E L(0,Tp,0,T,\omega) \ge -C(1 + K^{q/(q+1)} ),
	\]
	and so \eqref{applyAK} is proved.
	
	The sub-additive ergodic theorem of \cite{AK} then yields the existence of $\Omega_0 \in \mbf F$ with $\mbf P(\Omega_0) = 1$ and a random field $\oline{L}: \QQ^d \times \Omega \to \RR$ such that
	\[
		\oline{L}(p,\omega) =\lim_{T \to +\oo} \frac{1}{T} \phi([0,T),\omega) = \lim_{T \to +\oo} \frac{1}{T} L(0,Tp,0,T,\omega) \quad \text{for all } \omega \in \Omega_0  \text{ and }  p \in \QQ^d.
	\]	
	
	For $T > 0$, $p \in \RR^d$, and $\omega \in \Omega_0$, set
	\[
		\ell_T(p,\omega) := \frac{1}{T} L(0,Tp,0,T,\omega) = L^{1/T}(0,p,0,1,\omega).
	\]
	Fix $N \in \NN$. Then, by Lemma \ref{L:Lepsreg}, there exists $\Omega^N \in \mbf F$ such that $\Omega^N \subset \Omega_0$ and $\mbf P(\Omega^N) = 1$, and, for some $\eps_N : \Omega^N \to \RR_+$ and for all $\omega \in \Omega^N$, the collection
	\[
		\pars{ \ell_T(\cdot,\omega) }_{T > \eps_N^{-1}(\omega)}
	\]
	is equicontinuous on $B_N$. Set
	\[
		\Omega_1 := \bigcap_{N \in \NN} \Omega^N.
	\]
	Then $\mbf P(\Omega_1) = 1$, and, for any $\omega \in \Omega_1$, $\oline{L}(\cdot,\omega)$ can be extended to a continuous function on all of $\RR^d$, and moreover, as $T \to \oo$, $\ell_T(\cdot,\omega)$ converges locally uniformly to $\oline{L}(\cdot,\omega)$. 
	
	{\it Step 2: the limit is deterministic.} Fix $p \in \RR^d$ and $(y,s) \in \RR^d \times [0,\oo)$. Then Lemma \ref{L:Lepsreg} implies that there exists a modulus $\rho:[0,\oo) \to [0,\oo)$ such that, for all $\omega \in \Omega_1$ and sufficiently large $T$,
	\begin{align*}
		\abs{ \oline{L}(p,\omega) - \frac{1}{T} L(0,Tp, 0, T, \bs\tau_{y,s} \omega) } &= \abs{ \oline{L}(p,\omega) -L^{1/T}\pars{ \frac{y}{T} , p + \frac{y}{T}, \frac{s}{T}, 1 + \frac{s}{T},\omega} }\\
		&\le \abs{ \oline{L}(p,\omega) -L^{1/T}(0,p, 0, 1,\omega) } + \rho\pars{ \frac{|y|}{T} + \frac{s}{T}}.
	\end{align*}
	Sending $T \to \oo$ yields
	\[
		\oline{L}(p,\omega) = \oline{L}(p,\bs\tau_{y,s}\omega)
	\]
	for all $p \in \RR^d$, $(y,s) \in \RR^d \times [0,\oo)$, and $\omega \in \Omega_1$. The ergodicity of the group $(\bs \tau_\cdot)$ then implies that $\omega \mapsto \oline{L}(\cdot,\omega)$ is constant, and, in fact,
	\[
		\oline{L}(p) := \lim_{T \to \oo} \frac{1}{T} \mbf E L(0,Tp,0,T,\cdot).
	\]
	
	{\it Step 3: convexity and estimates}. The convexity can be seen in a standard way from the sub-additivity of $L$ (see, for instance, \cite{Schwab,S}). To prove \eqref{Lbarbounds}, we appeal to Lemma \ref{L:minimizer}, which yields a constant $C = C(M_0,q) > 1$ such that, for all $T > 0$,
\[
	\frac{1}{C} |p|^{q'} - C \le \mbf E L^{1/T}(0,p,0,1,\omega) \le C \pars{ |p|^{q'} + 1}.
\]
Letting $T \to \oo$ finishes the proof.
\end{proof}

Before we continue, we give an example to show that, in general, the assumption of almost-sure boundedness for $f$ cannot be dropped in the above argument. Consider the stationary sub-additive process
\[
	\psi([a,b),\omega) := \inf\left\{ \frac{1}{2} \int_a^b |\dot \gamma_t|^2 dt + \int_a^b \gamma_t\cdot dB_t : \gamma \in \mcl A(0,0,a,b) \right\};
\]
here $d = m = 1$ and $B$ is a one-dimensional Brownian motion. A straightforward computation reveals that the action is minimized by the path
\[
	[a,b] \ni t \mapsto \gamma^*_t := \int_a^t \pars{ B_s - \frac{1}{b-a} \int_a^b B_r dr  }ds,
\]
which leads to the identity
\[
	\psi([a,b),\omega) = \frac{1}{2} \left[ \frac{1}{b-a} \pars{ \int_a^b B_r dr}^2 - \int_a^b B_r^2 dr \right].
\]
Then
\[
	\mbf E \frac{1}{T} \psi([0,T),\cdot) = - \frac{T}{12},
\]
which is unbounded in $T$.

\subsection{The local uniform convergence of $L^\eps$}

Fix $\eps > 0$ and $(x,y,s,t) \in \RR^d \times \RR^d \times [0,\oo) \times [0,\oo)$ with $s < t$. Then Lemma \ref{L:identifyLbar} and the stationarity of $L^\eps$ yield
\[
	\mbf E L^\eps(x,y,s,t,\cdot) = \mbf E \eps L \pars{0, \frac{y-x}{\eps}, 0, \frac{t-s}{\eps},\cdot } \xrightarrow{\eps \to 0} (t-s) \oline{L}\pars{ \frac{y-x}{t-s} }.
\]
The next lemma establishes the local-uniform convergence of $L^\eps$ with probability one, using a standard argument that combines the multi-parameter ergodic theorem and Egoroff's theorem. Such an argument has been used several times throughout the literature on the stochastic homogenization of Hamilton-Jacobi equations; see, for example, \cite{ASunbounded,AS,AT,CST,JSTfront,JSTvisc,KRV,KV,Schwab}.

\begin{lemma}\label{L:Lbarlocaluniform}
	Assume \eqref{A:Hestimates} and \eqref{A:randomfield}. Then
	\[
		\mbf P \pars{ \lim_{\eps \to 0} \max_{|x| \le R} \max_{ |y| \le R} \max_{s,t \in [0,T], t-s > \tau} \abs{ L^\eps(x,y,s,t,\cdot) - (t-s) \oline{L}\pars{ \frac{y-x}{t-s}} }  = 0 \text{ for all $R > 0$, $0 < \tau < T$} } = 1.
	\]
\end{lemma}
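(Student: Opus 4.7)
My plan is to combine the almost sure convergence at the origin from Lemma~\ref{L:identifyLbar} with the space-time stationarity (Section~\ref{SS:productmeasure}), the multi-parameter Birkhoff ergodic theorem, Egoroff's theorem, and the equicontinuity from Lemma~\ref{L:Lepsreg}. By the scaling identity $L^\eps(0, v, 0, \tau, \omega) = \tau L^{\eps/\tau}(0, v/\tau, 0, 1, \omega)$, Lemma~\ref{L:identifyLbar} yields $L^\eps(0, v, 0, \tau, \omega) \to \tau \oline{L}(v/\tau)$ almost surely, locally uniformly in $(v, \tau) \in \RR^d \times (0, \infty)$. Using $L^\eps(x, y, s, t, \omega) = L^\eps(0, y - x, 0, t - s, \bs\tau_{x/\eps, s/\eps}\omega)$ from space-time stationarity, the task reduces to controlling this composition uniformly over macroscopic shifts $(x, s) \in B_R \times [0, T]$.

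\textbf{Egoroff plus ergodic theorem.} Fix the compact set $K' = B_{2R} \times [\tau, T]$, which contains every admissible difference $(y - x, t - s)$. Egoroff's theorem applied to the centered convergence on $K'$ produces, for each $\delta > 0$, a set $A_\delta$ with $\mbf P(A_\delta) \geq 1 - \delta$ and a deterministic rate $\eta_\delta(\eps) \to 0$ such that $\sup_{\omega' \in A_\delta,\, (v, \tau') \in K'} |L^\eps(0, v, 0, \tau', \omega') - \tau' \oline{L}(v/\tau')| \leq \eta_\delta(\eps)$. The multi-parameter Birkhoff theorem, applied to $\ind_{\Omega \setminus A_\delta}$ under the ergodic, measure-preserving group $(\bs\tau_{z, q})$, gives, for $\mbf P$-a.e. $\omega$, the convergence of the rescaled average of $\ind_{\Omega \setminus A_\delta}(\bs\tau_{z, q}\omega)$ over $B_{R/\eps} \times [0, T/\eps]$ to $\mbf P(\Omega \setminus A_\delta) \leq \delta$. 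Changing variables $(z, q) = (x/\eps, s/\eps)$, this means the ``bad'' set $B_\eps(\omega) := \{(x, s) \in B_R \times [0, T] : \bs\tau_{x/\eps, s/\eps}\omega \notin A_\delta\}$ has Lebesgue measure at most $2\delta |B_R \times [0, T]|$ once $\eps$ is small enough (depending on $\omega$).

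\textbf{Equicontinuity argument.} For a.e. $\omega$ and $\eps < \eps_0(\omega)$, Lemma~\ref{L:Lepsreg} gives a uniform H\"older estimate for $L^\eps(\cdot, \omega)$ on the slightly enlarged domain $U_{\tau/2, 2R, T}$. Since $|B_\eps(\omega)| \leq 2\delta|B_R \times [0, T]|$, an elementary ball-volume argument shows that every $(x, s) \in B_R \times [0, T]$ lies within distance $h = C\delta^{1/(d+1)}$ of some $(x', s') \notin B_\eps(\omega)$. Decomposing $|L^\eps(x, y, s, t, \omega) - (t-s)\oline{L}(\frac{y-x}{t-s})|$ into (i) a H\"older increment $|L^\eps(x, y, s, t, \omega) - L^\eps(x', y, s', t, \omega)| \leq C h^{\theta/q}$, (ii) the centered estimate $|L^\eps(x', y, s', t, \omega) - (t-s')\oline{L}(\frac{y-x'}{t-s'})| \leq \eta_\delta(\eps)$ (since $(x', s')$ is good), and (iii) the modulus of continuity of the deterministic limit, bounded by $C h^{\theta/q}$, I conclude $\sup_{U_{\tau, R, T}}|L^\eps - (t-s)\oline{L}(\frac{y-x}{t-s})| \leq C\delta^{\theta/(q(d+1))} + \eta_\delta(\eps)$. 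Letting $\eps \to 0$ and then $\delta \to 0$ finishes the proof.

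\textbf{Main difficulty.} The crux lies in the final step: transitioning from ``most macroscopic shifts are good'' (provided by the multi-parameter ergodic theorem applied at the rescaled domain $B_{R/\eps} \times [0, T/\eps]$) to uniform smallness on the entire domain, using the uniform H\"older bound from Lemma~\ref{L:Lepsreg}. The quantitative interplay between the rate $\eta_\delta(\eps)$, the size of the bad set in terms of $\delta$, and the H\"older exponent must be balanced so that the bounds truly tend to zero in the order $\eps \to 0$, $\delta \to 0$.
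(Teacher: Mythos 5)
Your proposal is correct and follows essentially the same route as the paper's proof: the scaling reformulation of Lemma~\ref{L:identifyLbar}, Egoroff to extract a high-probability set with a uniform rate, the multiparameter ergodic theorem to show the complement of the pulled-back good set has small Lebesgue measure, and the H\"older equicontinuity from Lemma~\ref{L:Lepsreg} to move from a nearby good shift to an arbitrary one, with the limits taken in the order $\eps \to 0$ then $\delta \to 0$. The only differences are cosmetic (you parameterize by $L^\eps(0,v,0,\tau')$ rather than $T^{-1}L(0,Tp,0,T)$), and you omit the final routine step of intersecting over a countable family of $(R,\tau,T)$.
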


\begin{proof}
	Let $\Omega_0 \in \mbf F$ be the event of full probability for which the conclusion of Lemma \ref{L:identifyLbar} holds. Then Egoroff's Theorem implies that, for all $\eta >0$, there exists an event $G_\eta \subset \Omega_0$ such that $\mbf P(G_\eta) \ge 1 - \eta$ and, for any $M > 0$,
	\[
		\lim_{T \to \oo} \sup_{|p| \le M} \sup_{\omega \in G_\eta} \abs{ \frac{1}{T} L\pars{ 0, Tp, 0, T,\omega} - \oline{L}(p)} = 0.
	\]
	
	Fix $R > 0$ and $T > 0$, and, for $\omega \in \Omega_0$, define
	\[
		A^\eps_\eta(\omega) := \left\{ (x,s) \in B_R \times [0,T] : \bs\tau_{x/\eps,s/\eps} \omega \in G_\eta \right\}.
	\]
	The multiparameter ergodic theorem (see Becker \cite{Be}) then yields that, for some $\Omega_\eta \in \mbf F$ satisfying $\Omega_\eta \subset \Omega_0$ and $\mbf P(\Omega_\eta) = 1$, and for all $\omega \in \Omega_\eta$,
	\[
		\frac{ \abs{ A^\eps_\eta(\omega)}}{\abs{ B_R \times [0, T]}} = \frac{1}{ \abs{ B_R \times [0,T]}} \int_{B_R \times [0,T]} \ind_{G_\eta}\pars{ \bs\tau_{(x/\eps,s/\eps)} \omega} dxds \xrightarrow{\eps \to 0} \mbf P(G_\eta),
	\]
	and so, for some $\eps_\eta: \Omega \to \RR_+$ and for all $\omega \in \Omega_\eta$, $\eps_\eta(\omega) > 0$ and
	\[
		\abs{ A^\eps_\eta(\omega)} \ge (1 - 2 \eta) \abs{ B_R \times [0, T]} \quad \text{for all } 0 < \eps < \eps_1(\omega).
	\]
	
	Now, fix $\omega \in \Omega_\eta$, $0 < \eps < \eps_\eta(\omega) \wedge \eps_0(\omega)$, $(x,y) \in B_R^2$, and $s,t \in [0,T]$ satisfying $t-s \ge \tau$. Then there exists $(x_\eps,s_\eps) \in A^\eps_\eta(\omega)$ such that, for some constant $c = c(d,R,T)> 0$,
	\[
		|x - x_\eps| + |s - s_\eps| \le c \eta^{1/(d+1)}.
	\]
	Choosing $\eta$ sufficiently small, depending on $\tau$, we have $t- s_\eps > \tau/2$.
	
	Set
	\[
		p_\eps := \frac{y-x_\eps}{t-s_\eps} \quad \text{and} \quad T_\eps := \frac{t-s_\eps}{\eps}.
	\]
	Note that, for some constant $M = M(R,\tau) > 0$, $\abs{ p_\eps} \le M$.
	
	We now invoke Lemma \ref{L:Lepsreg}, which gives a deterministic modulus $\rho: [0,\oo) \to [0,\oo)$, depending only on $R$, $T$, and $\tau$, such that, for all $\omega \in \Omega_\eta$ and $\eps \in (0, \eps_0(\omega) \wedge \eps_\eta(\omega))$,
	\begin{align*}
		&\abs{ L^\eps(x,y,s,t,\omega) - (t-s) \oline{L}\pars{ \frac{y-x}{t-s}} } 
		\le \abs{ L^\eps(x_\eps,y,s_\eps,t,\omega) - (t - s_\eps) \oline{L}\pars{ \frac{y - x_\eps}{t - s_\eps}}}\\
		&+ \abs{ L^\eps(x,y,s,t,\omega) - L^\eps(x_\eps,y,s_\eps,t,\omega)} + \abs{ (t - s) \oline{L}\pars{ \frac{y-x}{t-s}} - (t - s_\eps) \oline{L}\pars{ \frac{y-x_\eps}{t-s_\eps}} }\\
		&\le \rho(\eta) + T\abs{ \frac{1}{T_\eps} L\pars{ 0, T_\eps p_\eps, 0, T_\eps, \bs\tau_{(x_\eps/\eps, s_\eps/\eps)} \omega} - \oline{L}(p_\eps)} \\
		&\le \rho(\eta) + T \sup_{|p| \le M} \sup_{\tilde \omega \in G_\eta} \abs{ \frac{1}{T_\eps} L(0, T_\eps p, 0, T_\eps,\tilde \omega) - \oline{L}(p)}.
	\end{align*}
	Sending $\eps \to 0$ gives
	\[
		\limsup_{\eps \to 0} \max_{|x| \le R} \max_{ |y| \le R} \max_{s,t \in [0,T], t-s > \tau} \abs{ L^\eps(x,y,s,t,\omega) - (t-s) \oline{L}\pars{ \frac{y-x}{t-s}} } \le \rho(\eta).
	\]
	For $n \in \NN$, set $\eta_n := \frac{1}{n}$. If
	\[
		\tilde \Omega := \bigcup_{n \in \NN} \Omega_{\eta_n},
	\]
	we then have $\mbf P(\tilde \Omega) = 1$ and, for all $\omega \in \tilde \Omega$,
	\[
		\lim_{\eps \to 0} \max_{|x| \le R} \max_{ |y| \le R} \max_{s,t \in [0,T], t-s > \tau} \abs{ L^\eps(x,y,s,t,\omega) - (t-s) \oline{L}\pars{ \frac{y-x}{t-s}} } = 0.
	\]
	The result is finished in a standard way upon repeating the argument for a countable collection of $R$, $T$, and $\tau$, and taking the countable intersection of the resulting events of full probability.
\end{proof}

\subsection{The effective Hamiltonian and the homogenization of the equation}

Define the convex function
\[
	\oline{H}(p) := \oline{L}^*(p) := \sup_{q \in \RR^d} \pars{ p\cdot q - \oline{L}(q)},
\]
which, in view of \eqref{Lbarbounds}, immediately satisfies \eqref{Hbarestimates} (although in the following section, we will provide a sharper lower bound).

The proof of Theorem \ref{T:homog} will follow from the local uniform convergence of the Lagrangians, as well as the fact that, because $\oline{H}$ is convex and $u_0 \in BUC(\RR^d)$, $\oline{u}$ is given by the Hopf-Lax formula
\begin{equation}\label{E:HopfLax}
	\oline{u}(x,t) = \inf_{y \in \RR^d} \pars{ u_0(y) + t\oline{L}\pars{ \frac{x-y}{t}} }.
\end{equation}

\begin{proof}[Proof of Theorem \ref{T:homog}]

Let $\Omega_1 \in \mbf F$ be the event of full probability for which the conclusion of Lemma \ref{L:Lbarlocaluniform} holds.

{\it Step 1: coercivity bounds.} We first demonstrate that there exists $\Omega_2 \subset \Omega_1$ such that $\mbf P (\Omega_2) = 1$, as well as $\eps_2: \Omega \to \RR_+$ and a constant $C = C(T,M_0,q,\alpha) > 1$ such that, for all $x,y \in \RR^d$, $0 \le s < t \le T$, $\omega \in \Omega_2$, and $0 < \eps < \eps_2(\omega)$,
\begin{equation}\label{Lepscoercive}
	-C(t-s)^\alpha + \frac{1}{C} \frac{|y-x|^{q'}}{(t-s)^{q'-1}} \le L^\eps(y,x,s,t,\omega) \le C \frac{|y-x|^{q'}}{(t-s)^{q'-1} } + C(t-s)^\alpha.
\end{equation}
In view of Lemma \ref{L:minimizer}, there exist random variables $(\mcl D_\eps)_{\eps > 0}: \Omega \to \RR_+$ such that, for some constant $C = C(T,M_0,q,\alpha) > 1$ and for all $\eps > 0$, $x,y \in \RR^d$, and $0 \le s < t \le T$,
\[
	-C(\mcl D_\eps + 1) (t-s)^\alpha + \frac{1}{C} \frac{|y-x|^{q'}}{(t-s)^{q'-1}} \le L^\eps(y,x,s,t,\omega) \le C \frac{|y-x|^{q'}}{(t-s)^{q'-1} } + C(\mcl D_\eps + 1)(t-s)^\alpha,
\]
and, for all $p \ge 1$, there exists a constant $C' = C'(T,M_0,p,q,\alpha) > 0$ such that, for all $\eps > 0$ and $\lambda \ge 1$,
\[
	\mbf P\pars{ \mcl D_\eps > \lambda} \le \frac{C' \eps^{p/2}}{\lambda^{p}}.
\]
The Borel-Cantelli lemma yields the existence of $k_0: \Omega \to \NN$ and $\Omega_2 \subset \Omega_1$ with $\mbf P(\Omega_2) = 1$ such that, for all $\omega \in \Omega_2$ and $k \ge k_0(\omega)$,
\[
	\mcl D_{2^{-k}}(\omega) \le 1, 
\]
and so, for all $\omega \in \Omega_2$, $k \ge k_0(\omega)$, $x,y \in \RR^d$, and $s,t \in [0,T]$ with $s < t$,
\[
	-C(t-s)^\alpha + \frac{1}{C} \frac{|y-x|^{q'}}{(t-s)^{q'-1}} \le L^{2^{-k}}(y,x,s,t,\omega) \le C \frac{|y-x|^{q'}}{(t-s)^{q'-1} } + C(t-s)^\alpha.
\]
Now choose $\eps < \eps_2(\omega) := 2^{-k_0(\omega)}$ and let $k > k_0(\omega)$ be such that $2^{-k-1} < \eps \le 2^{-k}$. Then, for all $x,y \in \RR^d$ and $0 \le s < t \le T$,
\[
	L^\eps(x,y,s,t,\omega) = \eps L\pars{ \frac{x}{\eps}, \frac{y}{\eps}, \frac{s}{\eps}, \frac{t}{\eps},\omega } = (2^k \eps) L^{2^{-k}} \pars{ \frac{x}{2^k \eps}, \frac{y}{2^k\eps}, \frac{s}{2^k\eps}, \frac{t}{2^k\eps},\omega}.
\]
It follows that \eqref{Lepscoercive} holds upon replacing $C$ with $2C$, because $1 \le 2^k\eps < 2$.

{\it Step 2: localization.} Let $\tau > 0$ be fixed. We claim that there exists a deterministic $M$ depending only on $T$, $\tau$, and $\nor{u_0}{\oo}$ such that, for all $(x,t) \in \RR^d \times [0,T]$, $\omega \in \Omega_2$, and $\eps \in (0,\eps_2(\omega))$,
\begin{equation}\label{uepslocal}
	u^\eps(x,t,\omega) = \inf_{y \in B_M(x)} \pars{ u_0(y) + L^\eps(y,x,0,t,\omega)}.
\end{equation}
Setting $y = x$ in the definition of $u^\eps$ and using \eqref{Lepscoercive}, we find that there exists a constant $C = C(T) > 0$ such that, for all $\omega \in \Omega_2$, $\eps \in (0,\eps_2(\omega))$, and $(x,t) \in \RR^d \times [0,T]$,
\[
	u^\eps(x,t,\omega) \le C.
\]
The lower bound in \eqref{Lepscoercive} then yields that, if $|y-x| > M$ and $M$ is chosen large enough depending only on $T$ and $\tau$, then, for all $\eps \in (0,\eps_2(\omega))$,
\[
	u_0(y) + L^\eps(y,x,0,t,\omega) \ge -\nor{u_0}{\oo} + \frac{1}{C} \frac{|y-x|^{q'}}{\tau^{q'}} - C \ge - \nor{u_0}{\oo} + \frac{M^{q'}}{C\tau} - C > u^\eps(x,t,\omega).
\]
This establishes \eqref{uepslocal}.

{\it Step 3.} By \eqref{uepslocal} and Lemma \ref{L:Lbarlocaluniform}, we have, for all $\omega \in \Omega_2$,
\[
	\sup_{(x,t) \in B_R \times [\tau,T]} \abs{u^\eps(x,t,\omega) - \oline{u}(x,t)} \le \sup_{x \in B_R} \sup_{y \in B_{R+M}} \sup_{\tau < t < T} \abs{ L^\eps(y,x,0,t,\omega) - t \oline{L} \pars{ \frac{x-y}{t}} } \xrightarrow{\eps \to 0} 0.
\]
The bounds in \eqref{Lepscoercive} give, for $(x,t) \in B_R \times [0,\tau]$ and $\omega \in \Omega_2$, 
\[
	u^\eps(x,t,\omega) \le u_0(x) + C \tau^{\alpha},
\]
and, if $\rho: [0,\oo) \to [0,\oo)$ is the modulus of continuity for $u_0$,
\[
	u^\eps(x,t,\omega) \ge u_0(x) - C \tau^{\alpha} + \inf_{r \ge 0} \pars{ -\rho(r) + \frac{1}{C} \frac{r^{q'}}{\tau^{q'-1}} }.
\]
Define
\[
	\tilde \rho(\tau) :=  \sup_{r \ge 0} \pars{ \rho(r) - \frac{1}{C} \frac{r^{q'}}{\tau^{q'-1}} },
\]
and note that $\tilde \rho: [0,\oo) \to [0,\oo)$ satisfies $\lim_{\tau \to 0^+} \tilde \rho(\tau) = 0$. As a result, for all $\omega \in \Omega_2$ and $\eps \in (0,\eps_2(\omega))$,
\[
	\sup_{(x,t) \in B_R \times [0,\tau]} |u^\eps(x,t,\omega) - u_0(x)| \le C \tau^{\alpha} + C \tilde \rho(\tau).
\]
A similar argument gives 
\[
	\sup_{(x,t) \in B_R \times [0,\tau]} |\oline{u}(x,t,\omega) - u_0(x)| \le C \tau + \tilde \rho(\tau),
\]
and so, for all $\tau > 0$ and $\omega \in \Omega_2$,
\[
	\limsup_{\eps \to 0} \sup_{(x,t) \in B_R \times [0,T]} \abs{ u^\eps(x,t,\omega) - \oline{u}(x,t) } \le C(\tau + \tau^{\alpha} )+ \tilde \rho(\tau).
\]
The proof is finished upon letting $\tau \to 0$.
\end{proof}

\section{Enhancement}\label{S:enhancement}

If $L$ is defined as in \eqref{L}, then, for all $T > 0$ and $v \in \RR^d$,
\[
	\mbf E\frac{1}{T} L(0,Tv,0,T,\cdot) \le H^*(v) + \frac{1}{T}\mbf E \int_0^T f (tv,\cdot)\cdot dB_t = H^*(v),
\]
so that, in general, $\oline{L} \le H^*$ and
\[
	\oline{H} \ge H.
\]
This is actually an equality if $f$ is equal to a fixed, deterministic constant $\oline{f} \in \RR^d$, since then, for each $v \in \RR^d$ and for $\mbf P$-almost every $\omega \in \Omega$,
\[
	\frac{1}{T} L(0,Tv,0,Tv,\omega) = H^*(v) +  \oline{f} \cdot \frac{B(T,\omega)}{T} \xrightarrow{T \to \oo} H^*(v).
\]
It turns out that this is the only situation in which the two Hamiltonians are equal. In fact, a consequence of the isotropic nature of the temporal noise is that, if $f$ is nonconstant, then $\oline{H}$ is actually greater than $H$ everywhere. 

In order to facilitate the following arguments, we will assume additionally that
\begin{equation}\label{A:fC1kappa}
	\left\{
	\begin{split}
		&\text{for some $\kappa \in (0,1)$ and $M_0 > 0$,}\\
		&\text{$f(\cdot,\omega) \in C^{1,\kappa}(\RR^d,\RR^m)$ with probability one, and}\\
		&\nor{f}{\oo} + \nor{Df}{\oo} + [Df]_{\kappa} \le M_0.
	\end{split}
	\right.
\end{equation}
Let $\mbf v: \RR^d \to \RR^d$ and $\mbf p: \RR^d \to \RR^d$ satisfy
\[
	\mbf p(v) \in \argmax_{p \in \RR^d} \left\{ p \cdot v - H(p) \right\} \quad \text{and} \quad \mbf v(p) \in \argmax_{v \in \RR^d} \left\{ p \cdot v - H^*(v) \right\},
\]
and, for $\rho > 0$ and $v \in \RR^d$, set
\[
	G(v) := \sup_{\rho \in (0,1] }\frac{1}{\rho} \max_{|z| \le \rho} \left\{ H^*(v+z) - H^*(v) - \mbf p(v) \cdot z \right\}.
\]
Note that \eqref{DHstar} and the convexity of $H^*$ imply that
\[
	0 \le G(v) \le C(1 + |v|^{q'-1}) \quad \text{for all } v \in \RR^d.
\]

\begin{theorem}\label{T:enhancement}
	Assume that $H$ satisfies \eqref{A:Hestimates} and the random field $F$ satisfies \eqref{A:randomfield} and \eqref{A:fC1kappa}. Let $\oline{H}$ be the effective Hamiltonian from Theorem \ref{T:homog}. Then there exists $c = c(M_0,\kappa) > 0$ such that, for all $\lambda \in (0,\kappa)$,
		\begin{equation}\label{enhancementestimates}
			\oline{H}(p) \ge H(p) + \frac{c (\mbf E|Df(0)|^2)^{2 + 1/\lambda}}{1 + G(\mbf v(p))}
			\quad \text{for all } p \in \RR^d.
		\end{equation}
\end{theorem}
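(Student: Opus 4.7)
By Legendre duality, since $\mbf{p}(v)\in\partial H^*(v)$ and hence $H(p) = p\cdot \mbf{v}(p) - H^*(\mbf{v}(p))$, we have $\oline{H}(p) - H(p) \ge H^*(\mbf{v}(p)) - \oline{L}(\mbf{v}(p))$, where $\oline{L}$ is the effective Lagrangian identified in Lemma \ref{L:identifyLbar}. Thus, setting $v:=\mbf{v}(p)$, it suffices to exhibit a quantitative gap $H^*(v) - \oline L(v) \ge c\,(\mbf{E}|Df(0)|^2)^{2+1/\lambda}/(1+G(v))$. Since $\oline{L}(v) = \lim_T T^{-1}\mbf{E}\,L(0,Tv,0,T,\cdot)$ and $L$ is defined as an infimum of actions, the plan is to beat the linear path $r \mapsto rv$ by means of a clever trial path that exploits the isotropy of the temporal white noise.

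The trial path is a piecewise perturbation of $r\mapsto rv$. Partition $[0,T]$ into subintervals $I_k = [k\tau,(k+1)\tau]$ of length $\tau$ and, on each $I_k$, set
\[
    \gamma_r := rv + \sigma\,\eta\!\left(\tfrac{r-k\tau}{\tau}\right)\phi_k,
\]
where $\eta\in C^\infty([0,1])$ satisfies $\eta(0)=\eta(1)=0$, the amplitude $\sigma>0$ is small, and the $\RR^d$-valued random vector $\phi_k$ depends only on $f$ and on the local Brownian increments, specifically,
\[
    \phi_k := -c\int_{k\tau}^{(k+1)\tau} \eta\!\left(\tfrac{s-k\tau}{\tau}\right) Df(sv)^T\,d(B_s - B_{k\tau}).
\]
The sign of $\phi_k$ is chosen so that the perturbed path is anticorrelated with the Brownian increments on $I_k$, which is what drives the enhancement. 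The boundary conditions on $\eta$ ensure $\gamma_{k\tau} = k\tau v$ and $\gamma_{(k+1)\tau}=(k+1)\tau v$, so endpoint constraints are preserved. Since $\gamma$ is not Brownian-adapted, the stochastic integral $\int f(\gamma_r)\cdot dB_r$ is interpreted pathwise via integration by parts, as in the proof of Lemma \ref{L:fauxIto}.

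On each interval one computes three contributions to the expected action. First, for the Hamiltonian term: by the definition of $G$ and because the linear-in-$\phi_k$ Taylor term of $H^*$ integrates to zero (thanks to $\eta(0)=\eta(1)=0$), one obtains $\int_{I_k} H^*(\dot\gamma_r)\,dr \le \tau H^*(v) + \sigma\,G(v)\,\|\eta'\|_{L^1[0,1]}\,|\phi_k|$ whenever $\sigma|\phi_k|/\tau$ is small enough for the Taylor expansion to apply. Second, for the noise term: after writing $B_r = B_{k\tau} + (B_r-B_{k\tau})$, the $B_{k\tau}$ cross-term vanishes in expectation by independence; the surviving piece, expanded to order $\sigma$, produces by a conditional Ito isometry a main enhancement of $-c\sigma\,\alpha\,\tau\,\mbf{E}|Df(0)|^2$ per interval, where $\alpha := \int_0^1 \eta(u)^2\,du > 0$. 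Third, the remainder from the $C^{1,\kappa}$ Taylor expansion of $f$ contributes a non-martingale stochastic integral of size $\sim [Df]_\kappa\,\sigma^{1+\kappa}|\phi_k|^{1+\kappa}$, which is controlled, again via integration by parts, using tail estimates on $|\phi_k|$ at the level of $L^{1/\lambda}$ moments (with $\lambda<\kappa$).

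Summing over the $N = T/\tau$ intervals, letting $T\to\infty$, and applying the multiparameter ergodic theorem as in Lemma \ref{L:Lbarlocaluniform}, one obtains
\[
    \oline L(v) \le H^*(v) - c\sigma\,\alpha\,\mbf{E}|Df(0)|^2 + O\!\left(\sigma G(v)\sqrt{\mbf{E}|Df(0)|^2/\tau}\right) + O\!\left([Df]_\kappa\text{-dependent errors}\right).
\]
The parameters $\sigma$, $c$, and $\tau$ are then optimized so that (a) the main enhancement beats the $H^*$ cost, which forces $\sqrt{\tau\,\mbf{E}|Df(0)|^2}\gtrsim G(v)$, and (b) the H\"older errors are subordinate to the main enhancement, which fixes $\sigma$ as an appropriate negative power of $\mbf{E}|Df(0)|^2$. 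The resulting balance produces the claimed bound with exponent $2 + 1/\lambda$ and denominator $1 + G(\mbf{v}(p))$. The main obstacle is the third step: because $\gamma$ is non-adapted, the H\"older remainder in the expansion of $f$ does not reduce to a martingale increment and must be handled by moment estimates of $|\phi_k|$ up to order $1/\lambda$, which is exactly where the restriction $\lambda \in (0,\kappa)$ enters, and where the delicate interplay between the truncation of $\phi_k$, the tail of the Brownian motion, and the H\"older exponent of $Df$ determines the final exponent in the enhancement estimate.
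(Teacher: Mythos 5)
Your overall strategy --- perturb the linear path on each subinterval in a direction anticorrelated with the local noise, balance the $H^*$ cost through $G(v)$ against the noise gain, then Legendre-transform --- shares its skeleton with the paper's argument, but the implementation differs in a way that opens genuine gaps. The paper perturbs by $\delta u_k\,\eta((\cdot - kM)/M)$ with $u_k$ a vector in the \emph{closed unit ball} $B_1$ and $\delta\le M/2$ a deterministic amplitude, minimizing over $u_k\in B_1$ \emph{after} constructing the path and computing the action; the enhancement is extracted as a Gaussian first absolute moment $\mbf E|Y(v,0,\cdot)|$ via the rotational invariance of Wiener measure, and the $C^{1,\kappa}$ hypothesis enters only through the Kolmogorov-criterion bound on $[Y(v,\cdot,\omega)]_{C^\lambda(B_1)}$. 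You instead set the perturbation direction $\phi_k$ equal to a stochastic integral of $Df$ against the local Brownian increment, so your enhancement is a second moment $\mbf E|\phi_k|^2$. This has two consequences you do not resolve. First, since $u_k\in B_1$, the paper has $|\dot\gamma_r - v|\le 2\delta/M\le 1$ deterministically, so $G(v)$ applies directly; your $\phi_k$ is an unbounded Gaussian (variance $\sim\tau\,\mbf E|Df|^2$), so the step $\int_{I_k}H^*(\dot\gamma_r)\,dr\le\tau H^*(v)+\sigma G(v)\|\eta'\|_{L^1}|\phi_k|$ fails on a non-negligible event, and by \eqref{Hstarbounds} the excess $H^*$ cost on that event is polynomial in $|\phi_k|$; ``truncation'' is named but not carried out, and the balance is genuinely delicate. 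Second, and more fundamentally, the paper never Taylor-expands $f$: because $u_k$ is deterministic, $f(\gamma_r)-f(vr)=\delta u_k\,\eta(\cdot)\int_0^1 Df(vr+s\delta\eta u_k)\,ds$ is an honest Ito integrand and $Y(v,y,\omega)$ needs no pathwise interpretation. Once you make $\phi_k$ random, the purported ``linear'' piece $\int\langle Df(rv)\sigma\eta\phi_k, dB_r\rangle$ is non-adapted, and after the required integration by parts its derivative in $r$ produces $D^2f$, which is not controlled under \eqref{A:fC1kappa}; the correct decomposition must occur at the level of $Df(\gamma_r)\dot\gamma_r - Df(rv)v$ (as in Lemma \ref{L:fauxIto}), not of $f(\gamma_r)-f(rv)$. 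Finally, the multiparameter ergodic theorem plays no role here: by Lemma \ref{L:identifyLbar}, $\oline L(v)=\lim_T T^{-1}\mbf E L(0,Tv,0,T,\cdot)$, so one simply takes expectations of the test-path bound and uses stationarity of $F$. Whether a carefully truncated version of your construction recovers the exponent $2+1/\lambda$ (your sketch's scaling produces a different power of $\mbf E|Df(0)|^2$) is plausible but not established by what you have written.
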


As an example, consider the Hamiltonian
\[
	H(p) := \frac{1}{q} |p|^q,
\]
whose Legendre transform is given by
\[
	H^*(v) = \frac{1}{q'} |v|^{q'}.
\]
We then have
\[
	\mbf p(v) = DH^*(v) = |v|^{q'-2} v, \quad \mbf v(p) = DH(p) = |p|^{q-2}p,
\]
and, for some constant $C = C(q) > 0$,
\[
	G(v) \le C(1 + |v|)^{q'-2},
\]
so that \eqref{enhancementestimates} becomes, for some $c > 0$,
\[
	\oline{H}(p) \ge \frac{1}{q}|p|^q +c (1 + |p|)^{\frac{q-2}{q-1}}.
\]

\begin{proof}[Proof of Theorem \ref{T:enhancement}]
	Fix $v \in \RR^d$, $M > 0$, and $N \in \NN$. We define a path $\gamma \in \mcl A(0,NMv,0,NM)$ as follows: set
	\[
		\eta_r := 1- \abs{2r - 1} \quad \text{for } r \in [0,1],
	\]
	and, for a sequence $\pars{ u_k }_{k \in \NN} \subset B_1 \subset \RR^d$ and
	\begin{equation}\label{deltaM}
		0 < \delta \le \frac{M}{2},
	\end{equation}
	define
	\[
		\gamma_r := vr + \delta u_k \eta\pars{ \frac{r - kM}{M} } \quad \text{for } r \in [kM,(k+1)M] \text{ and } k = 0,1,2,\ldots.
	\]
	Then
	\[
		\frac{1}{NM} L(0,NMv,0,NM,\omega) \le \frac{1}{NM}\int_0^{NM} H^*(\dot \gamma_r)dr + \frac{1}{NM}  \int_0^{NM} f(\gamma_r,\omega) \cdot dB_r(\omega).
	\]
	Since $|\dot \gamma - v| \le \frac{2\delta}{M} \le 1$ and $\frac{1}{NM}\int_0^{NM} \dot \gamma_r dr = v$, we find that
	\begin{align*}
		\frac{1}{NM}\int_0^{NM} H^*(\dot \gamma_r)dr
		&= H^*(v) + \mbf p(v) \cdot \frac{1}{NM} \int_0^{NM} \pars{ \dot \gamma_r - v}dr\\
		&+ \frac{1}{NM} \int_0^{NM} \pars{ H^*(\dot \gamma_r) - H^*(v) - \mbf p(v) \cdot(\dot \gamma_r - v)}dr\\
		&\le H^*(v) + \frac{2\delta}{M} G(v).
	\end{align*}
	For $r \ge 0$, set $B^M(r) := M^{-1/2} B(Mr)$. Then
	\begin{align*}
		\frac{1}{NM} \int_0^{NM} & f(\gamma_r,\omega) \cdot dB_r(\omega)
		- \frac{1}{NM} \int_0^{NM} f(vr,\omega) \cdot dB_r(\omega) \\
		&= \frac{\delta}{NM} \sum_{k=0}^{N-1} u_k \cdot \int_{kM}^{(k+1)M} \int_0^1 Df\pars{ vr + s\delta \eta\pars{ \frac{r - kM}{M}} u_k,\omega} ds \; \eta\pars{ \frac{r - kM}{M} } dB_r(\omega)\\
		&= \frac{\delta}{NM^{1/2}} \sum_{k=0}^{N-1} u_k \cdot \int_{0}^{1} \int_0^1 Df\pars{ Mv(r+k) + s\delta \eta\pars{r} u_k,\omega} ds \; \eta\pars{r } dB^M_{r+k}(\omega).
	\end{align*}
	The choice of the sequence $(u_k)_{k \in \NN}$ was arbitrary, and so
	\begin{equation}\label{beforelimit}
		\begin{split}
		\frac{1}{NM} L(0, NM v, 0, NM,\omega) &\le H^*(v) + \frac{2\delta}{M}G(v) \\
		&+ \frac{1}{NM} \int_0^{NM} f(vr,\omega) \cdot dB_r + \frac{\delta}{NM^{1/2}} \sum_{k=0}^{N-1} Z_\delta(v,\sigma_k \omega),
		\end{split}
	\end{equation}
	where $\sigma_k := \bs \tau_{Mvk, Mk}$ and
	\[
		Z_\delta(v,\omega) := \min_{u \in B_1} u \cdot \int_0^1 \int_0^1 Df \pars{ Mvr + s \delta \eta(r) u, \omega}ds \; \eta(r) dB^M_r(\omega).
	\]
	
	The random field $Z_\delta$ takes the form
	\[
		Z_\delta(v,\omega) = \min_{u \in B_1} u \cdot Y(v,\delta u,\omega),
	\]
	where
	\[
		Y(v,y,\omega) := \int_0^1 \int_0^1 Df\pars{ Mvr + s  y \eta(r),\omega} ds \eta(r)dB^M_r(\omega).
	\]
	In view of \eqref{A:fC1kappa}, for all $m \ge 1$, there exists a constant $C = C(m) > 0$ such that, for all $y_1,y_2 \in B_1$,
	\[
		\mbf E \abs{ Y(v,y_1,\cdot) - Y(v,y_2,\cdot)}^m \le C M_0^m  |y_1 - y_2|^{\kappa m}.
	\]
	The Kolmogorov continuity criterion implies that, for any $\lambda \in (0,\kappa)$ and $\mbf P$-almost every $\omega \in \Omega$, $Y(v,\cdot,\omega) \in C^{\lambda}(B_1)$, and, moreover, for some constant $C = C(M_0,\lambda)$,
	\[
		\sup_{v \in \RR^d} \mbf E [Y(v,\cdot,\cdot)]_{C^{\lambda}(B_1)} \le C.
	\]
	We then find that $Z_\delta(v,\cdot) \in L^1(\Omega, \mbf P)$, since
	\[
		\abs{Z_\delta(v,\omega)} \le \sup_{|u| \le 1} \abs{ Y(v,\delta u,\omega) } \le |Y(v,0,\omega)| + [Y(v,\cdot,\omega)]_{C^{\lambda}(B_1)} \delta^{\lambda}.
	\]
	Moreover,
	\[
		\mbf E Z_\delta(v,\cdot) \le \mbf E Z_0 (v,\cdot) + C \delta^{\lambda} = - \mbf E \abs{ Y(v,0,\cdot) } + C \delta^{\lambda}.
	\]
	Recall that we may assume, in view of the independence of $f$ and $B$, that $\mbf P$ takes the form $\mu \otimes \nu$ on the probability space $C^1_b(\RR^d,\RR^m) \times C([0,\oo),\RR^m)$, where $\mu$ is a probability measure that is stationary and ergodic with respect to spatial translations, and $\nu$ is the Wiener measure (see subsection \ref{SS:productmeasure}). The rotational invariance of $\nu$ then yields
	\begin{align*}
		\mbf E \abs{ Y(v,0,\cdot)} &= \int_{C^1_b(\RR^d,\RR^m)} \int_{C([0,\oo),\RR^m)} \abs{ \int_0^1 Df(Mvr)\eta(r) \cdot dB_r } d\nu(B) d\mu(f)\\
		&=  \int_{C^1_b(\RR^d,\RR^m)} \int_{C([0,\oo),\RR^m)}  \abs{ \int_0^1 \abs{Df(Mvr)}\eta(r) dB^1_r} d\nu(B) d\mu(f).
	\end{align*}
	For fixed $f \in C^1_b(\RR^d,\RR^m)$, the random variable
	\[
		C([0,\oo),\RR^m) \ni B \mapsto \int_0^1 \abs{Df(Mvr)}\eta(r) dB^1_r
	\]
	is a stochastic integral with deterministic integrand, and is therefore a Gaussian random variable with respect to the probability measure $\nu$. Its standard deviation can therefore be computed, according to It\^o's formula, as
	\[
		\int_{C([0,\oo),\RR^m)} \abs{ \int_0^1 \abs{Df(Mvr)}\eta(r) dB^1_r(\cdot) } d\nu(B) = \sqrt{ \frac{2}{\pi} }  \pars{ \int_0^1 |Df(Mvr)|^2 \eta(r)^2 dr }^{1/2}.
	\]
	Since $0 \le \eta \le 1$, we have
	\[
		\int_0^1 \abs{ Df(Mvr)}^2 \eta(r)^2dr \le \nor{Df}{\oo} \pars{ \int_0^1 \abs{ Df(Mvr)}^2\eta(r)^2dr}^{1/2}.
	\]
	It now follows from Fubini's theorem and the stationarity of $f$ that
	\begin{align*}
		\mbf E \abs{ Y(v,0,\cdot)} &= \sqrt{ \frac{2}{\pi} } \int_{C^1_b(\RR^d,\RR^m)} \pars{ \int_0^1 |Df(Mvr)|^2 \eta(r)^2 dr }^{1/2} d\mu(f) \\
		&\ge \frac{1}{\nor{Df}{\oo}} \sqrt{ \frac{2}{\pi} } \int_{C^1_b(\RR^d,\RR^m)} \int_0^1 |Df(Mvr)|^2 \eta(r)^2 dr d\mu(f) \\
		&= \frac{1}{3 \nor{Df}{\oo}} \sqrt{ \frac{2}{\pi} } \mbf E \abs{ Df(0)}^2.
	\end{align*}
	We then set
	\begin{equation}\label{deltadef}
		\delta := \oline{c} \pars{\mbf E |Df(0)|^2}^{1/\lambda}
	\end{equation}
	for some sufficiently small constant $\oline{c} = \oline{c}(M_0,\lambda) > 0$, and we conclude that, for a further constant $c = c(M_0,\kappa) > 0$,
	\[
		\sup_{v \in \RR^d} \mbf E Z_\delta(v,\cdot) \le - c \mbf E \abs{ Df(0)}^2.
	\]
	Taking the expectation of both sides of \eqref{beforelimit}, we obtain, for some $c = c(M_0,\kappa) > 0$,
	\[
		\frac{1}{NM} \mbf E L(0,NMv, 0, NM,\cdot) \le H^*(v) + \frac{2\delta}{M}G(v) + \frac{\delta}{M^{1/2} }\mbf E Z_\delta(v,\cdot) \le H^*(v) + c \delta\pars{ \frac{1}{M}G(v) - \frac{\mbf E \abs{ Df(0)}^2}{M^{1/2} }},
	\]
	and so, sending $N \to \oo$,
	\[
		\oline{L}(v) \le H^*(v) + c \delta \pars{ \frac{1}{M}G(v) - \frac{\mbf E \abs{ Df(0)}^2}{M^{1/2} }}.
	\]
	We now choose 
	\[
		M := 4 \frac{ (1 + G(v))^2}{ (\mbf E |Df(0)|^2)^{2}}.
	\]
	Observe that, if $\oline{c}$ in \eqref{deltadef} is chosen so that $\oline{c} M_0^{4 + 2/\lambda} \le 2$, then
	\[
		\delta = \oline{c}  \pars{\mbf E |Df(0)|^2}^{1/\lambda} \le \frac{\oline{c} M_0^{4+2/\lambda}}{(\mbf E|Df(0)|^2)^2} \le \frac{M}{2},
	\]
	so that \eqref{deltaM} is satisfied. For this choice of $M$, we obtain, for some constant $c = c(M_0,\kappa,\lambda) > 0$,
	\[
		\oline{L}(v) \le H^*(v) - c\delta\frac{ (\mbf E \abs{ Df(0)}^2 )^2}{1 + G(v)} = H^*(v) - c\oline{c}\frac{ (\mbf E \abs{ Df(0)}^2 )^{2 + 1/\lambda}}{1 + G(v)}.
	\]
	and therefore, for all $p \in \RR^d$,
	\[
		\oline{H}(p) \ge \sup_{v \in \RR^d} \left\{ p \cdot v - H^*(v) + c\oline{c}\frac{ (\mbf E \abs{ Df(0)}^2 )^{2+1/\lambda}}{1 + G(v)} \right\}\ge H(p) + c\oline{c}\frac{ (\mbf E \abs{ Df(0)}^2 )^{2+1/\lambda}}{1 + G(\mbf v(p))}.
	\]
\end{proof}

\section{Noise of varying strength} \label{S:differentscalings}

We conclude by studying, for $\theta \in \RR$, $\eps > 0$, and a stationary-ergodic random field $f$ and Brownian motion $B$ satisfying \eqref{A:BM} - \eqref{A:muergodic}, the initial value problem
\begin{equation}\label{E:differentscalings}
	u^\eps_t + H(Du^\eps) = \eps^\theta f\pars{ \frac{x}{\eps},\omega } \cdot \dot B(t,\omega) \quad \text{in } \RR^d \times (0,\oo) \times \Omega \quad \text{and} \quad u^\eps(x,0,\omega) = u_0(x) \quad \text{in } \RR^d \times \Omega.
\end{equation}

The strength of the noise determines the nature of the enhancement effect for vanishing $\eps$. Namely, when $\theta$ is equal to the scaling critical exponent $1/2$, the enhancement property can be exactly characterized using the results from the previous section. When $\theta > 1/2$, the noise is macroscopically insignificant, while taking $\theta < 1/2$ gives rise to infinite velocity. 

\begin{theorem}\label{T:differentscalings}
	Assume $u_0 \in BUC(\RR^d)$, \eqref{A:Hestimates}, and $f$ and $B$ satisfy \eqref{A:BM} - \eqref{A:muergodic} and \eqref{A:fC1kappa}.
	
	\begin{enumerate}[(a)]
	\item\label{T:super} If $\theta > 1/2$, then, as $\eps \to 0$, $u^\eps$ converges locally uniformly in probability to the solution $u$ of
	\[
		u_t + H(Du) = 0 \quad \text{in } \RR^d \times (0,\oo) \quad \text{and} \quad u(\cdot,0) = u_0 \quad \text{in } \RR^d.
	\]
	\item\label{T:sub} If $\theta < 1/2$ and $f$ is nonconstant, then, as $\eps \to 0$, $u^\eps$ converges locally uniformly in $\RR^d \times (0,\oo)$ in probability to $-\oo$.
	
	\item\label{T:crit} If $\theta = 1/2$, then there exists a deterministic, convex Hamiltonian $\oline{H}:\RR^d \to \RR$ satisfying \eqref{Hbarestimates} and \eqref{enhancementestimates} such that, as $\eps \to 0$, $u^\eps$ converges locally uniformly in probability to the solution $\oline{u}$ of 
	\[
		\oline{u}_t + \oline{H}(D\oline{u}) = 0 \quad \text{in } \RR^d \times (0,\oo) \quad \text{and} \quad \oline{u}(\cdot,0) = u_0 \quad \text{in } \RR^d \times \{0\}.
	\]
	\end{enumerate}
\end{theorem}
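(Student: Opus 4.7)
The plan is to handle the three cases separately, with (c) reducing to the main theorems of the paper. For (c), $\theta = 1/2$, I use the Brownian scaling identity: writing $\tilde B(s) := \eps^{-1/2} B(\eps s)$, which is itself a standard Brownian motion under $\mbf P$, a direct calculation shows
\[
	\eps^{1/2} f(x/\eps) \cdot \dot B(t) \stackrel{d}{=} f(x/\eps) \cdot \dot{\tilde B}(t/\eps),
\]
so that the distribution of the solution $u^\eps$ of \eqref{E:differentscalings} with $\theta = 1/2$ coincides with that of the solution of the original problem \eqref{E:main}. Theorems \ref{T:homog} and \ref{T:enhancement} then directly yield the conclusion, producing $\oline H$ satisfying \eqref{Hbarestimates} and \eqref{enhancementestimates}, and the convergence in probability.

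For case (a), $\theta > 1/2$, I work with the control representation $u^\eps(x,t) = \inf_{y \in \RR^d}[u_0(y) + L^{\eps,\theta}(y,x,0,t)]$, where
\[
	L^{\eps,\theta}(y,x,0,t) := \inf_{\gamma \in \mcl A(y,x,0,t)} \left\{\int_0^t H^*(\dot\gamma_r)\,dr + \eps^\theta \int_0^t f(\gamma_r/\eps) \cdot dB_r\right\},
\]
and aim to prove $L^{\eps,\theta}(y,x,0,t) \to tH^*((x-y)/t)$ locally uniformly in probability; the Hopf--Lax formula then gives $u^\eps \to u$. The upper bound is obtained along the linear path from $y$ to $x$: the resulting stochastic integral is, conditionally on $f$, a mean-zero Gaussian with variance bounded by $\eps^{2\theta} M_0^2 t$, hence of order $O_P(\eps^\theta)$. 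For the lower bound, Lemma \ref{L:minimizer} applied to the rescaled field $\eps^\theta f(\cdot/\eps) \in F_{\sigma,\Lambda}$ with $\sigma = \eps^\theta M_0$, $\Lambda = \eps^{\theta - 1} M_0$ and $K = \sigma\Lambda = \eps^{2\theta - 1} M_0^2 \to 0$ controls the $W^{1,q'}$-norm of near-minimizers $\gamma^\eps$. A refinement tracking the $K$-dependence in the proof of Lemma \ref{L:fauxIto} then shows that the deterministic $M$-type constant in the stochastic-integral bound actually scales as $O(\sqrt K) = O(\eps^{\theta - 1/2})$, which together with $\mcl D_{\sigma,\Lambda} = O_P(\sigma) = O_P(\eps^\theta)$ yields an $o_P(1)$ bound on the stochastic term along $\gamma^\eps$, giving the matching lower bound.

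For case (b), $\theta < 1/2$, it suffices to show that for every fixed $(x_0, t_0) \in \RR^d \times (0, \oo)$, $u^\eps(x_0, t_0) \to -\oo$ in probability. I adapt the oscillating-path construction of the proof of Theorem \ref{T:enhancement}. Writing $v := x_0/t_0$ (and treating the degenerate case $v = 0$ by an up-and-back variant that still carries oscillations in some chosen direction), I fix $\delta_0 > 0$ small enough (depending only on $M_0$ and $\kappa$) to activate the estimates of Theorem \ref{T:enhancement}, choose $\delta = \delta_0 \eps$, $M = M^* \eps$, $N := \lfloor t_0/M \rfloor$, and use test paths $\gamma_r = v r + \delta u_k \eta((r - kM)/M)$ on $[kM,(k+1)M]$ with $\eta(\rho) := 1 - |2\rho - 1|$ and $u_k \in B_1$ chosen to minimize the local oscillation contribution. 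The deterministic action contributes $t_0 H^*(v) + O(1)$; the fixed-path stochastic integral $\eps^\theta \int_0^{t_0} f(vr/\eps) \cdot dB_r$ is $O_P(\eps^\theta)$; and the oscillation-induced contribution, after Brownian rescaling on each sub-interval of length $M$, equals $\delta_0 (M^*)^{1/2} \eps^{\theta + 1/2} \sum_{k=0}^{N-1} Z_k$, where each $Z_k$ is a stationary analogue of the random variable $Z_\delta$ from the proof of Theorem \ref{T:enhancement} satisfying $\mbf E Z_k \le -c\, \mbf E |Df(0)|^2$. The spatial ergodicity of $f$ in the direction of $v$, combined with the independence of the sub-interval Brownian motions, yields $N^{-1}\sum Z_k \to \mbf E Z_0$ almost surely as $N = t_0/(M^*\eps) \to \oo$, so the oscillation contribution is asymptotically $-c\, t_0 \delta_0 \mbf E|Df(0)|^2/(M^*)^{1/2} \cdot \eps^{\theta - 1/2}$, which diverges to $-\oo$ for $\theta < 1/2$.

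The main obstacle will be the lower bound in case (a): the stated version of Lemma \ref{L:fauxIto} tracks the $\sigma$-dependence of its tail decay but treats the deterministic constant $M$ as $K$-uniform, and a sharper inspection of the Case~2 estimates in its proof is required to extract the $O(\sqrt K)$ scaling. In case (b), the delicate point is that the fluctuations of $N^{-1}\sum Z_k$ around $\mbf E Z_0$ must concentrate at a rate $o(\eps^{1/2 - \theta})$ so as not to mask the $\eps^{\theta - 1/2}$ blow-up; this should follow from standard mixing estimates for the stationary field $f$ combined with a Borel--Cantelli argument analogous to the one used in the proof of Lemma \ref{L:Lepsreg}.
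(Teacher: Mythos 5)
Your case (c) matches the paper exactly: Brownian scaling reduces $\theta = 1/2$ to \eqref{E:main}, and Theorems \ref{T:homog} and \ref{T:enhancement} give the conclusion.

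For case (a) your approach would likely work but does more work than necessary. You keep the amplitude $\eps^\theta$ inside the $f$-part, so $K = \sigma\Lambda = \eps^{2\theta-1}M_0^2 \to 0$, and you then need the (unproved) claim that the deterministic constant in Lemma~\ref{L:fauxIto} scales as $O(\sqrt K)$. The paper avoids this by first replacing $B$ with $t \mapsto \eps^{1/2}B(t/\eps)$ (justified by Lemma~\ref{L:forcingstability}), which produces the Lagrangian
\[
L_{\eps,f}(x,y,s,t) = \inf\Bigl\{\int_s^t H^*(\dot\gamma)\,dr + \eps^{\theta - 1/2}\int_s^t f(\eps^{-1}\gamma_r)\cdot dB_{r/\eps}\Bigr\},
\]
so that the scaled field $f^\eps = \eps^{1/2} f(\cdot/\eps)$ has $K = M_0^2$ independent of $\eps$ (exactly as in Lemma~\ref{L:Lepsreg}), and the vanishing of the noise contribution is read off directly from the explicit prefactor $\eps^{\theta - 1/2} \to 0$, not from the $K$-dependence of the constants. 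That said, the $O(\sqrt K)$ refinement you need is plausible from Case~2 of the proof of Lemma~\ref{L:fauxIto} (the terms $2aK^{1/2}(r_2-r_1)^{(q+1)/(2q)}$ etc.), so this difference is one of economy rather than correctness.

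For case (b) there is a genuine gap, and also a much shorter route you are missing. You redo the oscillating-path construction of Theorem~\ref{T:enhancement} at scale $\eps$, taking $\delta = \delta_0\eps$, $M = M^*\eps$, $N = \lfloor t_0/(M^*\eps)\rfloor$, and then need $N^{-1}\sum_{k<N} Z_k \to \mbf E Z_0$ at a rate $o(\eps^{1/2-\theta})$ almost surely. The $Z_k$'s form a stationary sequence (the field is sampled at spatial scale $M^*v$ and the Brownian increments are independent), but you are asking for a \emph{quantitative} concentration rate for a triangular array that changes with $\eps$. You propose "standard mixing estimates for $f$", but mixing is \emph{not} part of the paper's hypotheses — only stationarity and ergodicity \eqref{A:muergodic} — so this would require an additional assumption not in the theorem statement. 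The paper sidesteps all of this: after Brownian rescaling, for any fixed $v_0 \in \arg\min H^*$ and any path $\gamma$, the elementary inequality $\int_s^t H^*(\dot\gamma) \ge (t-s)H^*(v_0)$ together with $\eps^{\theta-1/2} > 1$ (since $\theta < 1/2$) gives
\[
L_{\eps,f}(x,y,s,t) \le (t-s)H^*(v_0) + \eps^{\theta-1/2}\bigl(-(t-s)H^*(v_0) + L^\eps(x,y,s,t)\bigr),
\]
where $L^\eps$ is the Lagrangian of \eqref{Leps}. Lemma~\ref{L:Lbarlocaluniform} already gives $L^\eps(x - tv_0, x, 0, t) \to t\,\oline L(v_0)$ a.s.\ locally uniformly, and Theorem~\ref{T:enhancement} gives $\oline L(v_0) < H^*(v_0)$, so the bracket is asymptotically strictly negative while the prefactor blows up. This uses only the already-established homogenization and enhancement results, with no need for new ergodic averaging or mixing.
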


\begin{proof}
Replacing the Brownian motion $B$ with $t \mapsto \eps^{1/2} B(t/\eps)$ and invoking Lemma \ref{L:forcingstability}, it follows that it suffices to prove the appropriate local uniform limits, with probability one, for the function
\begin{equation}\label{uepsinlaw}
	\tilde u^\eps(x,t,\omega) := \inf_{y \in \RR^d} \pars{ u_0(y) + L_{\eps,f}(y,x,0,t,\omega) },
\end{equation}
where, for $\eps > 0$, $(x,y,s,t) \in \RR^d \times \RR^d \times [0,\oo) \times [0,\oo)$ with $s < t$, and $\omega \in \Omega$,
\[
	L_{\eps,f}(x,y,s,t,\omega) := \inf\left\{ \int_s^t H^*(\dot \gamma_r)dr + \eps^{\theta - 1/2} \int_s^t f(\eps^{-1} \gamma_r,\omega) \cdot dB_{r/\eps}(\omega)  : \gamma \in \mcl A(x,y,s,t) \right\}.
\]

\eqref{T:super} Similarly to the proof of Theorem \ref{T:homog}, the result will follow from two facts:
\begin{equation}\label{Lgrowth}	
	\left\{
	\begin{split}
	&\text{there exists $C > 1$ and $\eps_0 : \Omega \to \RR_+$ such that, with probability one,}\\
	&\text{for all $\eps \in (0,\eps_0)$, $x,y \in \RR^d$, and $s,t \in [0,T]$ with $s < t$,}\\
	&-C(t-s)^\alpha + \frac{1}{C} \frac{|y-x|^{q'}}{(t-s)^{q'-1}} \le L_{\eps,f}(x,y,s,t) \le C \pars{ \frac{|y-x|^{q'}}{(t-s)^{q'-1} } + (t-s)^\alpha},
	\end{split}
	\right.
\end{equation}
and
\begin{equation}\label{LtoHstar}
	\left\{
	\begin{split}
		&\text{for all $R > 0$ and $0 < \tau < T$,}\\
		&\lim_{\eps \to 0} \sup_{x,y \in B_R} \sup_{s,t \in [0,T], \; \tau \le t-s \le T}  \abs{ L_{\eps,f}(x,y,s,t) - (t-s) H^* \pars{ \frac{y-x}{t-s}} } = 0.
	\end{split}
	\right.
\end{equation}

Setting
\[
	\sigma^\eps := M_0 \eps^{1/2}, \quad \Lambda^\eps := M_0 \eps^{-1/2}, \quad F_\eps := F_{\sigma^\eps,\Lambda^\eps}, \quad \text{and} \quad \mcl D_\eps := \mcl D_{\sigma^\eps,\Lambda^\eps},
\]
we see that, by Lemma \ref{L:fauxIto}, there exists a constant $C = C(T, M_0) > 0$ such that, for all $\delta \in (0,1)$, $0 \le s \le r_1 \le r_2 \le t \le T$, $f \in F_\eps$, and Lipschitz $\gamma$,
\begin{equation}\label{thetafauxIto}
	\abs{ \int_{r_1}^{r_2} f(\eps^{-1} \gamma_r) \cdot dB_{r/\eps} } \le \pars{C \delta^{q'} \int_s^t |\dot \gamma_r|^{q'} dr + \frac{C + \mcl D_\eps}{\delta^q} }(r_2 - r_1)^\alpha,
\end{equation}
and, for some $C = C(T,M_0,p) > 0$ and all $\lambda \ge 1$,
\[
	\mbf P \pars{ \mcl D_\eps > \lambda} \le \frac{C \eps^{p/2}}{\lambda^{p/2}}.
\]

We first prove \eqref{Lgrowth}. Since $\theta > 1/2$, similar arguments as in the proof of Lemma \ref{L:minimizer} give, for some $C > 1$ and all $(x,y,s,t) \in \RR^d \times \RR^d \times [0,\oo) \times [0,\oo)$ with $s < t$ and $\eps \in (0,1)$,
\[
	\frac{1}{C} \frac{|y-x|^{q'}}{(t-s)^{q'-1}} - C\pars{ 1 + \mcl D_\eps}(t-s)^{\alpha} \le L_{\eps,f}(x,y,s,t) \le C\pars{  \frac{|y-x|^{q'} }{(t-s)^{q'-1}} + (1 + \mcl D_\eps)(t-s)^\alpha}.
\]
It follows from the Borel-Cantelli lemma that there exists $k_0: \Omega \to \NN$ such that, for a possibly different constant $C > 1$, with probability one, for all $k \ge k_0$ and all $(x,y,s,t) \in \RR^d \times \RR^d \times [0,T] \times [0,T]$ with $s < t$,
\[
	-C(t-s)^\alpha + \frac{1}{C} \frac{|y-x|^{q'}}{(t-s)^{q'-1}} \le L_{2^{-k},f}(x,y,s,t) \le C \pars{ \frac{|y-x|^{q'}}{(t-s)^{q'-1} } + (t-s)^\alpha}.
\]

Now let $0 < \eps < \eps_0 := 2^{-k_0}$, and choose $k > k_0$ so that
\[
	2^{-k-1} < \eps \le 2^{-k}.
\]
Set $\tau = 2^k \eps$, which satisfies $\tau \in (1/2,1]$. A straightforward scaling argument yields
\begin{equation}\label{monotonescaling}
	L_{\eps,f}(x,y,s,t) = \tau L_{2^{-k}, \tau^{\theta-1/2} f} \pars{ \frac{x}{\tau},\frac{y}{\tau},\frac{s}{\tau},\frac{t}{\tau}},
\end{equation}
and therefore, for yet another $C > 1$, we find that, with probability one, \eqref{Lgrowth} holds for all $(x,y,s,t)$ and $\eps \in (0,\eps_0)$.

We now establish \eqref{LtoHstar}. Fix $R > 0$ and $0 < \tau < T$. Then \eqref{thetafauxIto} implies that there exists $C = C(R,T,\tau,q,M_0) > 0$ such that, with probability one, for all $x,y \in B_R$ and $s,t \in [0,T]$ with $t - s \ge \tau$, 
\begin{align*}
	L_{\eps,f}(x,y,s,t) &\le (t-s) H^*\pars{ \frac{y-x}{t-s} } + \eps^{\theta-1/2} \int_s^t f\pars{ \frac{1}{\eps} \pars{ x + \frac{y-x}{t-s} r } }dB_{r/\eps}\\
	&\le (t-s) H^*\pars{ \frac{y-x}{t-s} } +C \eps^{\theta-1/2} \pars{ 1 + \mcl D_\eps}.
\end{align*}
For the lower bound, let $\nu \in (0,1)$ and $\gamma \in \mcl A(x,y,s,t)$ satisfy 
\[
	L_{\eps,f}(x,y,s,t) + \nu \ge \int_s^t H^*(\dot \gamma_r)dr + \eps^{\theta-1/2} \int_s^t f(\eps^{-1} \gamma_r)\cdot dB_{r/\eps}.
\]
In view of \eqref{Lgrowth}, we then have
\[
	\int_s^t |\dot \gamma_r|^{q'} dr \le C \pars{ 1 + \frac{1 + \mcl D_\eps}{\delta^q} + \delta^{q'} \int_s^t |\dot \gamma_r|^{q'}dr},
\]
and then rearranging terms and choosing $\delta$ sufficiently small yields
\[
	\int_s^t |\dot \gamma_r|^{q'}dr \le C(1 + \mcl D_\eps).
\]
It follows from Jensen's inequality that
\[
	L_{\eps,f}(x,y,s,t) + \nu \ge (t-s) H^*\pars{ \frac{y-x}{t-s} } - \eps^{\theta - 1/2} \pars{ 1 + \mcl D_\eps}.
\]
As $\nu$ was arbitrary, we conclude, combining the upper and lower bounds, that
\[
	\sup_{f \in F_{\eps}} \sup_{x,y \in B_R} \sup_{s,t \in [0,T], \; t - s \ge \tau} \abs{ L_{\eps,f}(x,y,s,t) - (t-s) H^*\pars{ \frac{y-x}{t-s} } } \le C \eps^{\theta - 1/2} (1 + \mcl D_\eps).
\]
As before, using the Borel-Cantelli lemma, the above expression converges with probability one, as $k \to \oo$ along the subsequence $\eps_k = 2^{-k}$, to $0$. The convergence over all $\eps \to 0$ can be seen by once again appealing to the scaling relationship \eqref{monotonescaling}.

(b) Let $v_0 \in \RR^d$ be such that
\[
	H^*(v_0) = \min_{v \in \RR^d} H^*(v).
\]
Then, since $\theta < 1/2$, we have, for all $\eps \in (0,1)$,
\[
	L_{\eps,f}(x,y,s,t) \le (t-s) H^*(v_0) + \eps^{\theta - 1/2} \pars{ - (t-s)H^*(v_0) + L^\eps(x,y,s,t) },
\]
where $L^\eps$ is given by \eqref{Leps}. It follows that
\begin{align*}
	\tilde u^\eps(x,t) &\le \inf_{y \in \RR^d} \pars{ u_0(y) + t H^*(v_0) + \eps^{\theta - 1/2} \pars{  - tH^*(v_0) + L^\eps(y,x,0,t) } }\\
	&\le u_0(x - tv_0) + t H^*(v_0) + \eps^{\theta - 1/2} \pars{ - t H^*(v_0) + L^\eps(x - tv_0,x,0,t)}.
\end{align*}
Lemma \ref{L:Lbarlocaluniform} yields that, with probability one, locally uniformly in $\RR^d \times (0,\oo)$,
\[
	L^\eps(x - tv_0,x,0,t) \xrightarrow{\eps \to 0} t \oline{L}(v_0) < t H^*(v_0),
\]
where the strict inequality is due to Theorem \ref{T:enhancement}. The result follows.

(c) This is a consequence of Theorems \ref{T:homog} and \ref{T:enhancement}.

\end{proof}

\section*{Acknowledgements}

The author would like to thank Panagiotis Souganidis and Pierre Cardaliaguet for many helpful comments and suggestions in the preparation of this work.

\bibliography{homogstochforcedHJ}{}
\bibliographystyle{acm}

\end{document}